\newtheorem{remark}{Remark}
\newtheorem{lem}{Lemma}
\newtheorem{teo}{Theorem}
\newtheorem{prop}{Proposition}
\newtheorem{defi}{Definition}
\begin{document}

\title{The Cauchy Problem for nonlinear Quadratic Interactions of the Schrödinger type in one dimensional space}

\author{Isnaldo Isaac Barbosa\\ \footnotesize{Instituto de Matemática}\\
	\footnotesize{Universidade Federal de Alagoas}\\
	\footnotesize{Maceió, Alagoas}\\
	\footnotesize{isnaldo@pos.mat.ufal.br}}


\begin{abstract}
	 In this work I study the well-posedness of the Cauchy problem associated with the  coupled Schr\"odinger equations {with quadratic nonlinearities}, which appears modeling problems in nonlinear optics. I obtain the local well-posedness for data {in Sobolev spaces} with low regularity. {To obtain} the local theory, I prove new bilinear estimates for the coupling terms of the system in the continuous case. Concerning global results, in the continuous case, I establish the global well-posedness in $H^s(\mathbb{R})\times H^s(\mathbb{R})$, for some negatives indexes $s$.  The proof of the global result uses the \textbf{I}-method introduced by Colliander, Keel, Staffilani, Takaoka and Tao.  
\end{abstract}

\maketitle

\section{Introduction}

This work is dedicated to the study of the Cauchy problem for a system {that appears modeling some  problems in the context of nonlinear optics}. More precisely, we will study the following mathematical model:
\begin{equation}\label{1.a}
	\begin{cases}
		i\partial_t u(x,t)+p\partial^2_x u(x,t) -\theta u(x,t)+ \bar{u}(x,t)v(x,t)=0, & x\in \mathbb{R},\; t\ge 0,\\
		i\sigma\partial_t v(x,t)+q\partial^2_x v(x,t) -\alpha v(x,t)+\tfrac{a}{2}u^2(x,t)=0,&  \\
		u(x,0)=u_0(x),\quad  v(x,0)=v_0(x),
	\end{cases}
\end{equation}
where $u$ and $v$ are complex valued functions {and}  $\alpha$, $\theta$ and {$a: =1/\sigma$ are real numbers representing  physical parameters of the system, where $\sigma >0$ and $p, \ q\ =\pm1$. The model \eqref{1.a} is given by the nonlinear coupling of two dispersive equations of Schrödinger type through the quadratic terms
	\begin{equation}
		N_1(u,v)=\overline{u}\cdot v \ \mbox{ and } N_2(u,v)=\frac{1}{2}u\cdot v.
	\end{equation}
}

Physically, according to the article {\cite{menyuk-1994}}, 
the complex functions $u$ and $v$ represent amplitude packets of the first and second harmonic of an optical wave, respectively. The values of $p$ and $q$ may be $1$ or $-1$, depending on the signals provided between the scattering/diffraction ratios and the positive constant $\sigma$ measures the scaling/diffraction magnitude indices. In recent years, interest in nonlinear properties of optical materials has attracted attention of physicists and mathematicians. Many researches suggest that by exploring the nonlinear reaction of the matter, the bit-rate capacity of optical fibers can be considerably increased and in consequence an improvement in the speed and economy of data transmission and manipulation. Particularly in non-centrosimetric materials, those having no inversion symmetry at molecular level, the nonlinear effects of lower order give rise to second order susceptibility, which means that the nonlinear response to the electric field is quadratic; see, for instance, the articles \cite{karamzin-1974} and \cite{desalvo-1992}.

{Another application for the system \eqref{1.a} is related to the Raman amplification in a plasma}. The study of laser-plasma interactions is an active area of interest. The main goal is to simulate nuclear fusion in a laboratory. In order to simulate numerically these experiments, we need some accurate models. The kinetic ones are the most relevant but very difficult to deal with practical computations. The fluids ones like bifluid Euler–Maxwell system seem more convenient but still inoperative in practice because of the high frequency motion and the small wavelength involved in the problem. This is why we need some intermediate models that are reliable from a numerical viewpoint \cite{colin2009stability}.

In the mathematical context N. Hayashi, T. Ozawa, K. Tanaka in \cite{hayashi2013} obtained local well-posedness for the Cauchy problem (\ref{1.a}) on the spaces $L^2(\mathbb{R}^n)\times L^2(\mathbb{R}^n)$ for $n\leq 4$ and $H^1(\mathbb{R}^n)\times H^1(\mathbb{R}^n)$ for $n\leq 6$. In \cite{li2014recent} the time decay estimates of small solutions to the systems under the mass resonance condition in 2-dimensional space  was revised. The authors also showed the existence of wave operators and modified wave operators of the systems under some mass conditions in $n$-dimensional space, where $n\geq 2$, and  showed the existence of scattering operators and finite time blow-up of the solutions for the systems in higher dimensional spaces.

Regarding to qualitative properties of Cauchy problem solutions (\ref{1.a}), we know that in the case where $p = q = 1$ the system was studied by F. Linares and J. Angulo in \cite{pava-2007} for initial data $u_0, v_0$ in the same periodic Sobolev space $H^s(\mathbb{T})$. More precisely, they obtained local well-posedness results  in  $H^s(\mathbb{T})\times H^s(\mathbb{T})$ for all $s \geq 0$ and obtained global well-posedness in the space $L^2(\mathbb{T}) \times L^2(\mathbb{T})$  using the conservation of the mass by the flow of the system, {that is,} the following conservation law:
\begin{equation}
	E(u(t), v(t))=\int_{-\infty}^{+\infty}\bigl( |u|^2+2\sigma |v|^2\bigr)dx= E(u_0,v_0).
\end{equation}

\begin{remark}
	The authors also {observed} in Comment 2.3 of \cite{pava-2007} that results can be obtained for data with lower regularity when $\sigma$ is different from 1, including: well-posedness in {$H_{per}^s \times H_{per}^s$} for $s> -1/2$. Furthermore, in the same work, stability and instability results were established for certain classes of periodic pulses. Another work devoted to the study of the existence and stability of wave type pulses for this model is due to A. Yew (see \cite{yew-2000}).	
\end{remark}

The techniques used in \cite{pava-2007} to obtain the results of local well-posedness follow the ideas in \cite{kenig-1996}, developed by C. Kenig, G. Ponce and L. Vega, where the initial value problem for a Schrödinger equation with quadratic nonlinearities in both periodic and continuous domain is studied. {More precisely}, they considered the following initial value problem:
\begin{equation}\label{eq-nls-i}
	\begin{cases}
		iu_t+ \partial_x^2u= N_j(u, \bar{u}),& x\in \mathbb{R}  \mbox{ or } x\in \mathbb{T}, t\ge 0,\\
		u(x,0)=u_0(x),& 
	\end{cases}
\end{equation}
where $N_1(u,\bar{u})= u\bar{u}$, $N_2(u,\bar{u})=u^2$ and $N_3(u,\bar{u})=\bar{u}^2$. The authors considered initial data in the Sobolev space $H^s$. In the continuous case, they proved local well-posedness for $s > -1/4$ in the case $j = 1$ and for $s> -3/4$ in the cases {$j = 2,\ 3$}. In the periodic case, local well-posedness was obtained for $s \geq 0$ when $j = 1$ and for $s> -1/2$ when {$j = 2,\ 3$}. To prove the local theory, they {used the Fourier restriction method}, known in the literature as {$X^{s, b}$-spaces} and introduced by J. Bourgain in \cite{bourgain-1993}. In this functional space, {sharp} bilinear estimates were proved. {These estimates} combined with the Banach Fixed Point Theorem applied to the integral operator associated to (\ref{1.a}) allowed us to obtain the desired local solutions. The lack of a conservation law for (\ref{eq-nls-i}) does not allow to get global results in some space, as usual.

We note that the results {given} in \cite{kenig-1996} can be applied to the system (\ref{1.a}) in the case where $\sigma=1$. In this situation, it is not difficult to obtain the local well-posedness in $H^s\times H^s$ for $s > -1/4$. However, a natural question arises:

\begin{quote}
	{\it What would be the scenery of the local and global well-posedness of the system (\ref{1.a}) when $\sigma \neq 1$ and for initial data in Sobolev spaces, not necessarily with the same regularity?}
\end{quote}

In this work, we considerer the Cauchy problem (\ref{1.a})  with any $\sigma>0$ and initial data $(u_0, v_0)$ belonging to Sobolev spaces $H^{\kappa}(\mathbb{R}) \times H^s(\mathbb{R})$ to answer the previous question. As far as we know, the local well-posedness for the system (\ref{1.a}) in low regularity it is unknow.

We will follow the ideias developed by A. J. Corcho and C. Matheus in  \cite{corcho-2009}, where they 
{treated} the Schrödinger-Debye system, modelled by
\begin{equation}\label{SD-i}
	\begin{cases}
		iu_t+ \tfrac{1}{2}\partial_x^2u= uv,& x\in \mathbb{R},\; t\ge 0,\\
		\mu v_t + v =\pm|u|^2,              & \mu >0,\\
		u(x,0)=u_0(x),\quad  v(x,0)=v_0(x),
	\end{cases}
\end{equation}
which also has quadratic type nonlinearities and the authors developed a local and global theory in Sobolev spaces with different regularities. They used the method also based on obtaining {sharp} bilinear estimates for the coupling terms in suitable Bourgain spaces as well as the use of fixed point techniques. 

{Moreover, in the same work, global results were obtained} by using a technique known as \textbf{I}-method which was first implemented by J. Colliander, M. Keel, G. Staffilani, H. Takaoka and T. Tao in \cite{tao-2001}.

Before of enunciate the main results, we given the following definition.
\begin{defi} Given $\sigma >0$, we say that the Sobolev indice pair $(\kappa, s)$ verifies the hypotheses $H_{\sigma}$ if it satisfies one of the following conditions:
	\begin{enumerate}
		\item [a)] $|\kappa|-1/2\leq s<\min\{\kappa+1/2,\ 2\kappa+1/2\}$\; for\; $0<\sigma<2$;
		\item [b)] $\kappa =s \ge 0$\; for\; $\sigma =2$;
		\item [c)] $|\kappa|-1\leq s<\min\{\kappa+1, \ 2\kappa+1 \}$\; for\; $\sigma >2$.
	\end{enumerate}
	{We denote}  
	\begin{equation}\label{region-bcl}
		\mathcal{W}_{\sigma}:=\Bigl\{(\kappa, s) \in \mathbb{R}^2;\; (\kappa, s)\; \text{verify the hypothesis}\; H_{\sigma}\Bigr\}.
	\end{equation}
\end{defi}

Throughout the paper, we fix a cutoff function $\psi$ in $C^{\infty}_0$  such that $0 \leq \psi(t) \leq 1,$
\begin{equation}
	\psi(t)=
	\begin{cases}
		1, \ \ \mbox{ if } \ \ |t|\leq 1\\
		0,\ \ \mbox{ if }\ \ |t|\geq 2
	\end{cases}
\end{equation}
and $\psi_T(t)=\psi\left(\frac{t}{T}\right)$.

Our main  local well-posedness result   is the following  statement.

\begin{teo}
	For any $\sigma>0$ and $(u_0,v_0)\in H^{\kappa}\times H^{s}$ where the Sobolev index pair $(\kappa, s)$ verifying the hypothesis $H_{\sigma}$, there exist a positive time $T=T\left(\|u_0\|_{H^{\kappa}}, \|v_0\|_{H^s},\sigma\right)$ and an unique solution $(u(t), v(t))$ for the initial value problem (\ref{1.a}), satisfying
	\begin{equation}
		\psi_T(t)u\in X^{\kappa,\frac 12+} \ \ \mbox{ and } \ \ \psi_T(t)v\in X_{1/\sigma}^{\kappa,\frac 12+},
	\end{equation} 
	\begin{equation}
		u\in C\left([0,T]; H^{\kappa}(\mathbb{R})\right) \ \ \mbox{ and } \ \ v\in C\left([0,T]; H^{s}(\mathbb{R})\right).
	\end{equation}
	
	Moreover, the map $(u_0,v_0)\longmapsto (u(t), v(t))$ is locally Lipschitz from $H^{\kappa}(\mathbb{R})\times H^s(\mathbb{R})$ into $C\left([0,T]; H^{\kappa}(\mathbb{R})\times H^s(\mathbb{R})\right)$.
\end{teo}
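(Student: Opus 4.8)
The plan is to prove local well-posedness via the contraction mapping principle applied to the Duhamel (integral) formulation of \eqref{1.a} in the Bourgain spaces $X^{\kappa,\frac12+}$ and $X_{1/\sigma}^{s,\frac12+}$ adapted to the two Schrödinger group velocities $p$ and $q/\sigma$. First I would reduce the problem to its time-localized integral equation: writing $\Phi(u,v) = (\Phi_1(u,v), \Phi_2(u,v))$ with
\begin{equation}
\Phi_1(u,v)(t) = \psi_1(t)\,U_p(t)u_0 - i\,\psi_T(t)\int_0^t U_p(t-t')\bigl(\overline{u}\,v - \theta u\bigr)(t')\,dt',
\end{equation}
and analogously for $\Phi_2$ with the group $U_{q/\sigma}$, the evolution $e^{-i\theta t}$, $e^{-i\alpha t/\sigma}$ absorbed harmlessly, and nonlinearity $\tfrac{a}{2}u^2$, the standard Bourgain-space machinery (linear estimate, the $\psi_T$ time-cutoff estimate, and the inhomogeneous/Duhamel estimate, all of which are classical and presumably recalled earlier in the paper) reduces everything to \emph{bilinear estimates} for the coupling terms $\overline{u}v$ and $u^2$. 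Concretely, for $(\kappa,s)\in\mathcal{W}_\sigma$ one needs
\begin{equation}
\|\overline{u}\,v\|_{X^{\kappa,-\frac12+}} \lesssim \|u\|_{X^{\kappa,\frac12+}}\,\|v\|_{X_{1/\sigma}^{s,\frac12+}}, \qquad \|u_1 u_2\|_{X_{1/\sigma}^{s,-\frac12+}} \lesssim \|u_1\|_{X^{\kappa,\frac12+}}\,\|u_2\|_{X^{\kappa,\frac12+}},
\end{equation}
together with the analogous estimates at $-\frac12+$ on the time-derivative side; these are precisely the "new bilinear estimates" advertised in the abstract, and I would invoke them as the technical heart (assuming them proved in the section that the excerpt stops before).

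Granting those bilinear estimates, the rest is routine. Second, I would choose the radius of the ball in $X^{\kappa,\frac12+}\times X_{1/\sigma}^{s,\frac12+}$ comparable to $\|u_0\|_{H^\kappa}+\|v_0\|_{H^s}$ and use the extra power of $T$ — a gain of $T^{0+}$ coming from the $\psi_T$ cutoff when one lowers the $b$-index from $\frac12+$ to $\frac12-$ in the Duhamel estimate, i.e. $\|\psi_T F\|_{X^{\sigma,b-1}} \lesssim T^{0+}\|F\|_{X^{\sigma,b-1+}}$ — to make $\Phi$ a contraction on that ball for $T$ small depending only on $\|u_0\|_{H^\kappa}$, $\|v_0\|_{H^s}$ and $\sigma$. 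The same difference estimates give that $\Phi$ is a contraction in the metric, yielding the fixed point $(u,v)$ and its uniqueness in the ball; a standard a posteriori argument (or: the bilinear estimates applied to a difference of two solutions) upgrades this to unconditional uniqueness in the stated class. Third, the continuity-in-time statements $u\in C([0,T];H^\kappa)$, $v\in C([0,T];H^s)$ follow from the embedding $X^{\sigma,b}\hookrightarrow C(\mathbb{R};H^\sigma)$ for $b>\frac12$ applied to $\psi_T u$, $\psi_T v$.

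Fourth, for the Lipschitz dependence on the data, I would run the same contraction argument for two data pairs $(u_0,v_0)$, $(\tilde u_0,\tilde v_0)$ on a common time interval (shrinking $T$ to handle both, using that $T$ depends only on the norms), subtract the two fixed-point identities, and estimate $\|u-\tilde u\|_{X^{\kappa,\frac12+}} + \|v-\tilde v\|_{X_{1/\sigma}^{s,\frac12+}}$ by the linear estimate applied to $u_0-\tilde u_0$, $v_0-\tilde v_0$ plus the \emph{bilinear} estimates applied to the telescoped differences $\overline{u}v - \overline{\tilde u}\tilde v = \overline{(u-\tilde u)}v + \overline{\tilde u}(v-\tilde v)$ and $u^2-\tilde u^2 = (u-\tilde u)(u+\tilde u)$, with the small factor $T^{0+}$ absorbing the nonlinear contributions; restricting to $[0,T]$ and using the $X^{\sigma,b}\hookrightarrow C([0,T];H^\sigma)$ embedding gives the asserted local Lipschitz map into $C([0,T];H^\kappa\times H^s)$.

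The main obstacle is entirely in the bilinear estimates: the two coupling terms $N_1=\overline{u}v$ and $N_2=\tfrac12 u^2$ mix two \emph{different} dispersion relations, $\tau = -p\xi^2$ for $u$ and $\tau = -(q/\sigma)\xi^2$ for $v$, and the resonance function governing the interaction is $p\xi_1^2 \pm p\xi_2^2 \mp (q/\sigma)(\xi_1+\xi_2)^2$ (with signs depending on whether a bar appears). Its size, and in particular the location of its zero set, depends delicately on $\sigma$ relative to $2$ and on the signs $p,q$ — which is exactly why the admissible region $\mathcal{W}_\sigma$ has three regimes ($0<\sigma<2$, $\sigma=2$, $\sigma>2$) in the hypothesis $H_\sigma$. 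Carrying out the dyadic/Tomas–Stein-type Cauchy–Schwarz analysis of the convolution, tracking these modulation weights, and verifying that the stated ranges of $(\kappa,s)$ are precisely what the calculus allows, is the real content; everything else above is bookkeeping. I therefore expect the proof of the theorem to consist of: (i) cite the abstract Bourgain-space lemmas, (ii) cite the bilinear estimates, (iii) assemble the contraction and difference estimates as sketched, which is short once (ii) is in hand.
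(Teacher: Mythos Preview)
Your proposal is correct and follows essentially the same route as the paper: the paper sets up the Duhamel maps $\Phi_1,\Phi_2$ with the cutoffs $\psi_1,\psi_T$, invokes the linear and inhomogeneous $X^{s,b}$ estimates (Lemma~\ref{l2.1}) together with the bilinear estimates of Section~\ref{bilinear} (Propositions~\ref{p1}--\ref{p6}, one pair for each $\sigma$-regime), and runs a contraction on a ball in $X^{\kappa,\frac12+\mu}\times X_a^{s,\frac12+\mu}$, gaining a positive power $T^{\mu}$ from the gap between $b=\tfrac12+\mu$ and $b'=-d$ with $1/4<d<1/2$. Two cosmetic differences: the paper keeps the linear terms $\theta u$ and $\alpha v$ inside the Duhamel integral rather than gauging them away, and the power of $T$ is extracted directly from Lemma~\ref{l2.1} (via $T^{1+b'-b}$) rather than via a separate $\psi_T$-localization estimate; also, the paper only asserts uniqueness in the iteration space, not unconditional uniqueness.
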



\begin{center}\label{figure}
	\begin{figure}[h]
	\includegraphics[scale=0.3]{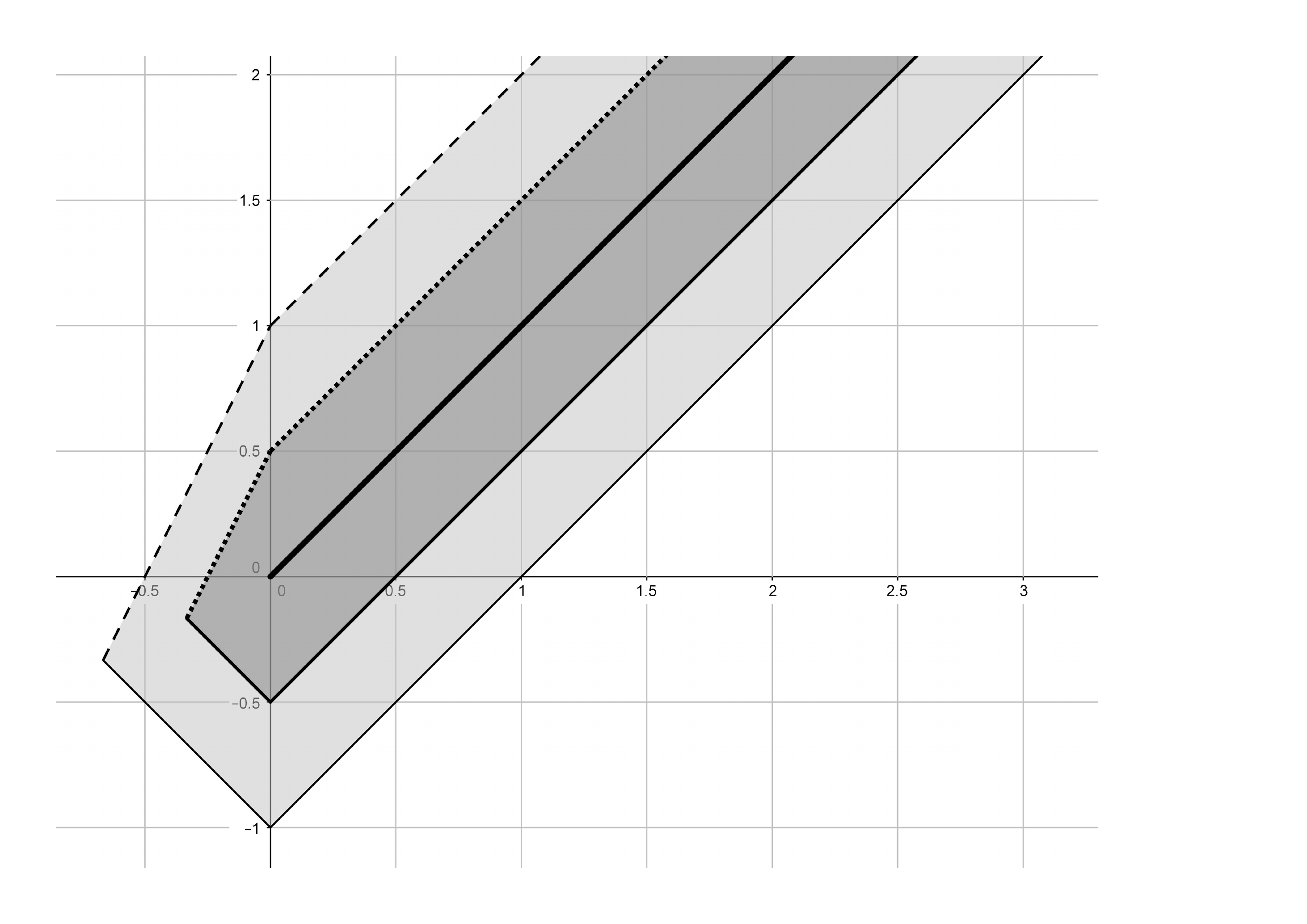}
	\caption{Region $\mathcal{W}_{\sigma}$}
	\label{R}
\end{figure}
\end{center}

Concerning global well-posedness we have the following result.

\begin{teo} \label{global}
	
	 In the following cases:
	\begin{itemize}
		\item $\sigma=2$ and $s=0$;
		\item $\sigma>2$ and $s\geq -1/2$;
		\item  $0<\sigma<2$ and $s\geq -1/4$.
	\end{itemize}
	  
	   The  Cauchy Problem associated to the system (\ref{1.a}) is globally well-posed, i.e., there exists a unique solution for any $T>0$ with initial condition  $(u_0,v_0)\in H^s(\mathbb{R})\times H^s(\mathbb{R})$.
	   
\end{teo}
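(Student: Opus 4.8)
The plan is to treat the three cases separately. The regime $s\ge 0$ — which in particular contains the case $\sigma=2$, $s=0$ — I would settle by the mass conservation law, as follows. Since $(s,s)$ then satisfies the hypothesis $H_\sigma$ for every $\sigma>0$, Theorem~1 applies at regularity $\kappa=s=0$ and yields a local solution on $[0,T_0]$ with $T_0=T_0(\|u_0\|_{L^2},\|v_0\|_{L^2},\sigma)$ depending only on the $L^2$ size of the data. The quantity $E(u(t),v(t))=\int(|u|^2+2\sigma|v|^2)\,dx$ recalled in the Introduction is conserved by the flow — differentiate in $t$, substitute the two equations of \eqref{1.a}, and observe that the dispersive and linear contributions are real while the two quadratic coupling terms cancel after taking imaginary parts — so $\|u(t)\|_{L^2}^2+2\sigma\|v(t)\|_{L^2}^2$ stays constant on the maximal existence interval. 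Hence $T_0$ does not shrink under reiteration, and concatenating local solutions gives a solution on $[0,T]$ for every $T>0$; when $s>0$ one additionally propagates the $H^s$-norm on each step through the bilinear estimates behind Theorem~1, which allows at most a bounded multiplicative growth per step and therefore a finite bound for $\|(u(t),v(t))\|_{H^s\times H^s}$ on $[0,T]$. Uniqueness and the Lipschitz dependence are inherited from Theorem~1.

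For $s<0$ (so that necessarily $\sigma\neq 2$) I would instead run the \textbf{I}-method of Colliander--Keel--Staffilani--Takaoka--Tao \cite{tao-2001}, in the form used by Corcho--Matheus for the Schr\"odinger--Debye system \cite{corcho-2009}. For a large dyadic $N$, let $I=I_N$ be the Fourier multiplier with even symbol $m_N$ equal to $1$ on $|\xi|\le N$ and to $(|\xi|/N)^{s}$ on $|\xi|>N$; then $\|u\|_{H^s}\lesssim\|Iu\|_{L^2}\lesssim N^{-s}\|u\|_{H^s}$. Define the modified energy $\mathcal E(t):=\|Iu(t)\|_{L^2}^2+2\sigma\|Iv(t)\|_{L^2}^2$, so $\mathcal E(0)\lesssim N^{-2s}\bigl(\|u_0\|_{H^s}^2+\|v_0\|_{H^s}^2\bigr)$. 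Applying $I$ to \eqref{1.a} produces the system solved by $(Iu,Iv)$, whose coupling terms are $I(\bar u v)$ and $\tfrac a2 I(u^2)$; since $(0,0)$ satisfies $H_\sigma$ for every $\sigma\neq 2$, re-running the fixed-point argument of Theorem~1 at regularity $\kappa=s=0$ for this modified system yields, on an interval $[0,\delta]$ with $\delta\sim(1+\mathcal E(0))^{-\vartheta}$, a solution with $\|\psi_\delta Iu\|_{X^{0,\frac12+}}+\|\psi_\delta Iv\|_{X_{1/\sigma}^{0,\frac12+}}\lesssim\mathcal E(0)^{1/2}$.

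The heart of the argument is an almost conservation law for $\mathcal E$. Expanding $\tfrac{d}{dt}\mathcal E(t)$ in Fourier variables and inserting the equations for $(Iu,Iv)$, the dispersive and linear parts are purely imaginary and drop out, while the combination of the two coupling terms reproduces exactly the cancellation responsible for the conservation of $E$, except for the mismatch between $m_N$ evaluated at a sum of two frequencies and the corresponding product $m_N\cdot m_N$; since $m_N\equiv 1$ on $|\xi|\le N$ and $m_N$ is smooth, this mismatch is an $O(N^{-1})$ multiplier supported where the largest frequency is $\gtrsim N$. Estimating the resulting trilinear space--time integrals by the same multilinear machinery that underlies the bilinear estimates of Theorem~1, and exploiting the extra decay from the mismatch, I expect to obtain, for some $\beta=\beta(\sigma)>0$,
\[
\bigl|\mathcal E(\delta)-\mathcal E(0)\bigr|\;\lesssim\;N^{-\beta}\Bigl(\|\psi_\delta Iu\|_{X^{0,\frac12+}}+\|\psi_\delta Iv\|_{X_{1/\sigma}^{0,\frac12+}}\Bigr)^{3}\;\lesssim\;N^{-\beta}\,\mathcal E(0)^{3/2}.
\]

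Finally one iterates. After a preliminary scaling of \eqref{1.a} — available modulo the harmless lower order terms $-\theta u$ and $-\alpha v$ — one may assume $\mathcal E(0)\le 1$, so $\delta\sim 1$ and each step increases $\mathcal E$ by at most $O(N^{-\beta})$; hence $\mathcal E$ remains $\le 2$ for $\gtrsim N^{\beta}$ consecutive steps, i.e.\ on a time interval of length $\gtrsim N^{\beta}$. Undoing the scaling, this covers any prescribed $[0,T]$ once $N=N(T,\|(u_0,v_0)\|_{H^s\times H^s})$ is chosen large, and then $\|u(t)\|_{H^s}+\|v(t)\|_{H^s}\lesssim\mathcal E(t)^{1/2}\lesssim 1$ on $[0,T]$; together with Theorem~1 this gives global well-posedness. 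The thresholds $s\ge -1/2$ (for $\sigma>2$) and $s\ge -1/4$ (for $0<\sigma<2$) are precisely the ranges in which this bookkeeping closes — in which the power of $N$ gained from the almost conservation law beats the power of $N$ lost in comparing $\|(u_0,v_0)\|_{H^s\times H^s}$ with $\mathcal E(0)$ after scaling. I expect the main obstacle to be exactly this almost conservation law: isolating the cancellation inside $\tfrac{d}{dt}\mathcal E$, extracting the sharp exponent $\beta$ from the symbol mismatch, and controlling the trilinear error terms in the Bourgain spaces with enough room to reach the endpoints $s=-1/2$ and $s=-1/4$. The degeneracy of the resonance function of the $\bar u\,v$-coupling when $\sigma=2$ is what blocks this argument there and is the reason that case must be handled separately by mass conservation.
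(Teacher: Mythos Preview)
Your outline is essentially the paper's approach: mass conservation for $s\ge 0$, and the I-method with the modified energy $\mathcal{E}(t)=\|Iu\|_{L^2}^2+2\sigma\|Iv\|_{L^2}^2$ for $s<0$. Two differences are worth flagging.

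First, the paper does not rescale; it iterates directly with a short step $\delta\sim N^{4s/3}$ coming from a revisited local theory that gives $\delta\sim(\|Iu_0\|_{L^2}+\|Iv_0\|_{L^2})^{-4/3+}$, and tracks all powers of $N$ through the iteration. Your scaling variant is a standard alternative, and the lower-order terms $-\theta u$, $-\alpha v$ are indeed harmless (they can also be removed by a gauge $u\mapsto e^{-i\theta t}u$, which only inserts bounded time-dependent phases into the nonlinearity).

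Second, and more substantively, your heuristic that the symbol mismatch is ``an $O(N^{-1})$ multiplier'' is not correct and would not close the estimate on its own. The commutator symbol
\[
M(\xi_1,\xi_2)=\frac{m(\xi_1+\xi_2)-m(\xi_1)m(\xi_2)}{m(\xi_1)m(\xi_2)}
\]
can \emph{grow} like $(N_1/N)^{-2s}$ in the region $|\xi_1|\sim|\xi_2|\sim N_1\gg N$ (for $s=-\tfrac12$ this is $N_1/N$, unbounded in $N_1$). The decay $N^{-1/2}$ (resp.\ $N^{-1/4}$) in the almost-conservation law does not come from the multiplier alone but from combining case-by-case bounds on $M$ with a refined bilinear Strichartz estimate of the form $\|(D_x^{1/2}f)\cdot g\|_{L^2_{x,t}}\lesssim\|f\|_{X^{0,1/2}}\|g\|_{X^{0,1/2}}$ under frequency separation (Lemma~5.1 of \cite{corcho-2009}, quoted in the paper), together with an explicit $\delta^{1/2}$ gained from the time localization. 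This is exactly the ``main obstacle'' you anticipated, and it is where the specific thresholds $s=-\tfrac12$ and $s=-\tfrac14$ actually emerge; your sketch has the right architecture but misidentifies the source of the $N$-decay.
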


Now we describe the structure of our work. {The Section 2 is devoted to  summarize some preliminary results}. In Section 3, we will develop a local theory in Bourgain spaces, following closely the techniques used in \cite{kenig-1996} and \cite{corcho-2009}, where for each positive $\sigma$ we obtain quite general results in Sobolev spaces with regularities {out of the diagonal case $\kappa=s$}. Specifically, we will prove {local well-posedness} for data $(u_0, v_0) \in H^{\kappa}\times H^s$ with indices $(\kappa, s)\in \mathcal{W}_{\sigma}$ (see the figure (\ref{figure})). 

Finally, in Section 4 we will use the {I-method to extend  globally} the local solutions obtained for data in $H^s\times H^s$ with for some negatives values of $s$. More precisely, we have regularity $-\frac{1}{4}\leq s\leq 0$ when $0<\sigma<2$ and $-\frac{1}{2}\leq s\leq 0$ when $\sigma>2$. At this point, it will be crucial the use of a refined Strichartz-type estimate in Bourgain's spaces for the Schrödinger equation. For details the reader can see \cite{colliander-2001g}.

\section{Preliminary results}

We consider the equation of the form
\begin{equation}\label{2.1}
	i\partial_t\omega-\phi\left(-i\partial_x\right)\omega=F(\omega),
\end{equation}
where $\phi$ is a measurable real-valued function and $F$ is a nonlinear function.

The Cauchy Problem for (\ref{2.1}) with initial data $\omega(0)=\omega_0$ is rewritten as the following integral equation
\begin{equation}\label{2.2}
	\omega(t)=W_{\phi}(t)\omega_0-i\displaystyle\int_0^t W_{\phi}(t-t')F(\omega(t'))dt',
\end{equation}
where $W_{\phi}(t)=e^{-it\phi(-i\partial_x)}$ is the group that solves the linear part of (\ref{2.1}).

Let $X^{s,b}(\phi)$ be the completion of $\mathcal{S}(\mathbb{R}^2)$ with respect to the norm
\begin{equation}\label{2.3}
	\begin{array}{ll}
		\left\| f \right\|_{X^{s,b}(\phi)}& :=\left\|W_{\phi}(-t)f \right\|_{H^b_t(\mathbb{R}, H^s_x)}\\
		& =\left\| \langle\xi\rangle^s\langle\tau\rangle^b \mathcal{F}\left(e^{it\phi(-i\partial_x)}f\right)(\tau,\xi) \right\|_{L^2_{\tau}L^2_{\xi}}\\
		&=\left\| \langle\xi\rangle^s\langle\tau+\phi(\xi)\rangle^b \widehat{f}(\tau,\xi) \right\|_{L^2_{\tau}L^2_{\xi}}.
	\end{array}
\end{equation}

The following lemma was proved while establishing the local well-posedness of the Zakharov system by Ginibre, Tsutsumi and Velo in \cite{ginibre-1997}.

\begin{lem}\label{l2.1}
	Let $-\frac{1}{2}<b'\leq 0\leq b\leq b'+1$, $\psi$ a cutoff function and $T\in [0,1]$. Then for $F\in X^{s, b'}(\phi)$ we have
	\begin{equation}\label{2.4}
		\left\|\psi_1(t)W_{\phi}(t)\omega_0 \right\|_{X^{s,b}(\phi)}\leq C \left\|\omega_0 \right\|_{H^s},
	\end{equation}
	\begin{equation}\label{2.5}
		\left\|\psi_T(t)\displaystyle\int_0^t W_{\phi}(t-t')F(\omega(t'))dt'\right\|_{X^{s,b}(\phi)}\leq C T^{1+b'-b} \left\|F \right\|_{X^{s,b'}(\phi)}.
	\end{equation}
\end{lem}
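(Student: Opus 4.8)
The plan is to peel off the group $W_{\phi}$ so that both inequalities become purely temporal estimates with the spatial weight $\langle\xi\rangle^{s}$ as a passive spectator. Recall that, by definition, $\|G\|_{X^{s,b}(\phi)}=\|W_{\phi}(-t)G\|_{H^{b}_{t}(\mathbb{R};H^{s}_{x})}$, and that after a partial Fourier transform in $x$ this norm equals $\bigl(\int_{\mathbb{R}}\langle\xi\rangle^{2s}\,\|G_{\xi}\|_{H^{b}_{t}}^{2}\,d\xi\bigr)^{1/2}$, where $G_{\xi}(t)$ denotes the $x$-frequency-$\xi$ component of $G$. Since the operations appearing in \eqref{2.4}--\eqref{2.5} touch only the $t$ variable, it is enough to establish the scalar-in-$x$ versions of both inequalities, with constants independent of $\xi$, and then multiply by $\langle\xi\rangle^{2s}$ and integrate in $\xi$ by Fubini. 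For \eqref{2.4} this is immediate: $W_{\phi}(-t)\bigl[\psi_{1}(t)W_{\phi}(t)\omega_{0}\bigr]=\psi_{1}(t)\,\omega_{0}$ is a tensor product, so its $H^{b}_{t}H^{s}_{x}$ norm equals $\|\psi_{1}\|_{H^{b}_{t}}\,\|\omega_{0}\|_{H^{s}_{x}}$, and $\psi\in\mathcal{S}(\mathbb{R})\subset H^{b}(\mathbb{R})$ finishes it.

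For \eqref{2.5}, set $f(t):=W_{\phi}(-t)F(t)$, so that $\|F\|_{X^{s,b'}(\phi)}=\|f\|_{H^{b'}_{t}H^{s}_{x}}$ and $W_{\phi}(-t)\int_{0}^{t}W_{\phi}(t-t')F(t')\,dt'=\int_{0}^{t}f(t')\,dt'$. By the reduction above it then suffices to prove the one-dimensional estimate
\begin{equation*}
	\Bigl\|\,\psi_{T}(t)\!\int_{0}^{t}g(t')\,dt'\,\Bigr\|_{H^{b}(\mathbb{R})}\;\leq\;C\,T^{1+b'-b}\,\|g\|_{H^{b'}(\mathbb{R})},\qquad T\in(0,1].
\end{equation*}
I would prove this starting from the identity $\int_{0}^{t}g(t')\,dt'=\int_{\mathbb{R}}\widehat{g}(\tau)\,\frac{e^{it\tau}-1}{i\tau}\,d\tau$ and splitting the $\tau$-integral into the resonant region $|\tau|\leq 1/T$ and the non-resonant region $|\tau|>1/T$. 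On the resonant part I expand $\frac{e^{it\tau}-1}{i\tau}=\sum_{k\geq0}\frac{(i\tau)^{k}}{(k+1)!}\,t^{k+1}$, reducing its contribution to $\sum_{k\geq0}\frac{i^{k}}{(k+1)!}\bigl(\int_{|\tau|\leq1/T}\tau^{k}\widehat{g}\,d\tau\bigr)\,t^{k+1}\psi_{T}(t)$; using the scaling bound $\|t^{k+1}\psi_{T}\|_{H^{b}}\lesssim C^{k}\,T^{k+\frac12-b}$ (valid since $b\geq0$ and $T\leq1$) together with $\bigl|\int_{|\tau|\leq1/T}\tau^{k}\widehat{g}\,d\tau\bigr|\lesssim C^{k}\,T^{b'-k-\frac12}\|g\|_{H^{b'}}$ from Cauchy--Schwarz, the $k$-th term is $\lesssim\frac{C^{k}}{(k+1)!}\,T^{1+b'-b}\|g\|_{H^{b'}}$ and the series sums. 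On the non-resonant part I split $\frac{e^{it\tau}-1}{i\tau}=\frac{e^{it\tau}}{i\tau}-\frac{1}{i\tau}$: the constant-in-$t$ piece yields $c\,\psi_{T}(t)$ with $|c|=\bigl|\int_{|\tau|>1/T}\frac{\widehat{g}(\tau)}{i\tau}\,d\tau\bigr|\lesssim T^{b'+\frac12}\|g\|_{H^{b'}}$ — it is exactly here that $b'>-\tfrac12$ enters, guaranteeing $\int_{|\tau|>1/T}\langle\tau\rangle^{-2-2b'}\,d\tau\lesssim T^{1+2b'}<\infty$ — so that $|c|\,\|\psi_{T}\|_{H^{b}}\lesssim T^{b'+\frac12}\cdot T^{\frac12-b}\,\|g\|_{H^{b'}}=T^{1+b'-b}\|g\|_{H^{b'}}$; the leftover is $\psi_{T}(t)h(t)$ with $\widehat{h}=\frac{\widehat{g}}{i\tau}\mathbf{1}_{\{|\tau|>1/T\}}$, and the fractional Leibniz bound $\|\psi_{T}h\|_{H^{b}}\lesssim\|\psi_{T}\|_{L^{\infty}}\|h\|_{H^{b}}+\|\psi_{T}\|_{\dot W^{b,\infty}}\|h\|_{L^{2}}$ for $0\leq b\leq1$, combined with $\|h\|_{H^{b}}\lesssim T^{1+b'-b}\|g\|_{H^{b'}}$ (which uses $b\leq b'+1$ to dominate $\langle\tau\rangle^{2b-2-2b'}$ by $T^{2+2b'-2b}$ on $\{|\tau|>1/T\}$), $\|h\|_{L^{2}}\lesssim T^{1+b'}\|g\|_{H^{b'}}$ (again using $b'>-\tfrac12$) and $\|\psi_{T}\|_{\dot W^{b,\infty}}\lesssim T^{-b}$, closes the bound. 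Reinserting $\langle\xi\rangle^{s}$ and integrating in $\xi$ recovers \eqref{2.5}.

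The genuinely delicate point I expect is the non-resonant term $\psi_{T}h$: the naive assertion ``multiplication by $\psi_{T}$ is bounded on $H^{b}$'' holds only with a $T$-dependent constant that would destroy the gain, so one must exploit that $\widehat{h}$ is supported on $\{|\tau|>1/T\}$ while $\widehat{\psi_{T}}$ concentrates on $\{|\tau|\lesssim 1/T\}$ — equivalently, play the precise $\dot W^{b,\infty}$ scaling of $\psi_{T}$ against the $b\leq b'+1$ decay of $\widehat{h}$ — to extract the full power $T^{1+b'-b}$ rather than merely $T^{0}$. Everything else is bookkeeping of powers of $T$, and the whole lemma is classical: it is the estimate of Ginibre--Tsutsumi--Velo \cite{ginibre-1997}, with precursors in Bourgain \cite{bourgain-1993} and Kenig--Ponce--Vega \cite{kenig-1996}, so one may alternatively simply cite it.
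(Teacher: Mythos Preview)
The paper does not prove this lemma but simply cites Lemma~2.1 of \cite{ginibre-1997}; your sketch is precisely the standard argument behind that citation (reduction via $W_{\phi}(-t)$ to a scalar $H^{b'}\!\to\! H^{b}$ estimate for $\psi_T\int_0^t$, then the resonant/non-resonant splitting at $|\tau|=1/T$), so the approaches coincide. One small arithmetic slip to fix: the scaling bound should read $\|t^{k+1}\psi_T\|_{H^b}\lesssim C^k\,T^{\,k+\frac{3}{2}-b}$ (not $T^{\,k+\frac12-b}$), which is exactly what combines with your coefficient bound $T^{\,b'-k-\frac12}$ to produce the claimed $T^{\,1+b'-b}$.
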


\begin{proof}
	See Lemma 2.1 in \cite{ginibre-1997}.
\end{proof}

In our case we shall use the space $X^{s,b}(\phi)$ for the phase functions $\phi_1(\xi)=\xi^2$ and $\phi_a(\xi)=a\xi^2.$ Indeed we can rewrite the system (\ref{1.a}) in the form
\begin{equation}\label{2.a}
	\begin{cases}
		i\partial_t u-\phi_1(-i\partial_x) u -\theta u+ \bar{u}v=0,&\\
		i\partial_t v-\phi_a(-i\partial_x) v -\alpha v+\tfrac{a}{2}u^2=0,&  \ a>0. \end{cases}
\end{equation} 
Then we have
\begin{equation*}
	X^{\kappa, b}(\phi_1)=X^{\kappa, b}, \ \ W_{\phi_1}=e^{it\partial^2_x}
\end{equation*}
and
\begin{equation*}
	X^{s, b}(\phi_a)=X^{s, b}_a, \ \ W_{\phi_a}=e^{iat\partial^2_x}.
\end{equation*}

We finish this section with the following elementary integral estimates which will be used to estimate the nonlinear terms in Section \ref{bilinear}.

\begin{lem}\label{l2.2}
	Let $p,q>0$, for $r=\mbox{min}\{p,q\}$ with $p+q>1+r$, there exists $C>0$ such that  
	\begin{equation}\label{conta1}
		\displaystyle\int_{\mathbb{R}}\dfrac{dx}{\langle x-\alpha\rangle^p \ \langle x-\beta \rangle^q}\leq \dfrac{C}{\langle \alpha-\beta \rangle^r}.
	\end{equation}
	Moreover, for $q>\frac{1}{2}$, 
	\begin{equation}\label{conta2}
		\displaystyle\int_{\mathbb{R}}\dfrac{dx}{\langle \alpha_0+\alpha_1 x+x^2\rangle^q}\leq C\ \mbox{ for all } \alpha_0,\alpha_1\in\mathbb{R}.
	\end{equation}
\end{lem}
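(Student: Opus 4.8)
The plan is to establish the two bounds separately by elementary one–variable calculus, in each case normalizing by a translation and then cutting $\mathbb{R}$ into the regions where the Japanese brackets are comparable to the relevant parameter. Neither estimate needs any harmonic analysis; \eqref{conta1} is the familiar ``convolution of two brackets'' inequality and \eqref{conta2} is the ``bracket of a quadratic'' inequality, both of which recur in the $X^{s,b}$ literature.

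For \eqref{conta1} I first observe that the integrand and the right–hand side are unchanged under simultaneously swapping $(p,\alpha)$ with $(q,\beta)$, so I may assume $p\le q$, hence $r=p$. Translating via $x\mapsto x+\alpha$ and setting $\gamma:=\beta-\alpha$, the claim becomes $\int_{\mathbb{R}}\langle x\rangle^{-p}\langle x-\gamma\rangle^{-q}\,dx\lesssim\langle\gamma\rangle^{-p}$. I split $\mathbb{R}=A\cup B$ with $A=\{|x|\ge|\gamma|/2\}$ and $B=\{|x-\gamma|\ge|\gamma|/2\}$; these cover $\mathbb{R}$ because $|\gamma|\le|x|+|x-\gamma|$. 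On $A$ one has $\langle x\rangle\gtrsim\langle\gamma\rangle$, so the integral over $A$ is $\le C\langle\gamma\rangle^{-p}\int_{\mathbb{R}}\langle x-\gamma\rangle^{-q}\,dx\lesssim\langle\gamma\rangle^{-p}$, the last integral being finite since $q=p+q-p>1$ (this is the hypothesis $p+q>1+r$). On $B$ one has $\langle x-\gamma\rangle\gtrsim\langle\gamma\rangle$, hence $\langle x-\gamma\rangle^{-q}\le C\langle\gamma\rangle^{-p}\langle x-\gamma\rangle^{-(q-p)}$ and the integral over $B$ is $\le C\langle\gamma\rangle^{-p}\int_{\mathbb{R}}\langle x\rangle^{-p}\langle x-\gamma\rangle^{-(q-p)}\,dx\lesssim\langle\gamma\rangle^{-p}$, since the total exponent $p+(q-p)=q>1$. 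Adding the two pieces gives \eqref{conta1}.

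For \eqref{conta2}, completing the square writes $\alpha_0+\alpha_1x+x^2=\bigl(x+\tfrac{\alpha_1}{2}\bigr)^2+c$ with $c:=\alpha_0-\tfrac{\alpha_1^2}{4}$, so after a translation it suffices to bound $I(c):=\int_{\mathbb{R}}\langle y^2+c\rangle^{-q}\,dy$ uniformly in $c\in\mathbb{R}$. If $c\ge0$, then $y^2+c\ge y^2$ forces $\langle y^2+c\rangle\gtrsim\langle y\rangle^{2}$, whence $I(c)\lesssim\int_{\mathbb{R}}\langle y\rangle^{-2q}\,dy<\infty$ because $2q>1$. If $c<0$, write $c=-a^2$ with $a>0$: for $|y|\ge 2a$ one has $|y^2-a^2|\ge\tfrac34 y^2$, so that part of $I(c)$ is again $\lesssim\int\langle y\rangle^{-2q}<\infty$; for $|y|\le 2a$ I split according to the distance to the roots $\pm a$, using that $|y^2-a^2|$ is comparable to $a\,|y\mp a|$ there, bounding the integrand by $1$ on the window $|y\mp a|\le 1/a$ (whose length is $\lesssim 1/a\lesssim1$) and estimating $\int_{1/a}^{a}(a\,t)^{-q}\,dt$ on the complementary range. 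A short case distinction on this last integral shows that the total contribution is bounded independently of $a$ precisely when $q>\tfrac12$. (Alternatively one may apply the distribution–function identity to $t\mapsto\langle t\rangle^{-q}$ together with the uniform bound $\bigl|\{y\in\mathbb{R}:|y^2+c|\le\mu\}\bigr|\le C\sqrt{\mu}$; the resulting $\mu$–integral converges iff $q>\tfrac12$.)

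The computations are entirely routine; the only spot deserving attention is the region $|y|\le 2a$ near the roots in \eqref{conta2}, where the integrand is singular and one must verify that the singularity is integrable with a constant uniform in $a$. That is exactly where — and the only place where — the restriction $q>\tfrac12$ is used.
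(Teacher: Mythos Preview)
Your argument is correct. The paper does not actually prove this lemma: its entire proof reads ``See Lemma 2.3 in \cite{kenig-1996}.'' So there is no approach to compare against; you have supplied a self-contained elementary proof where the paper only gives a citation.

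Two small remarks. In the region $B$ for \eqref{conta1} you invoke, without comment, that $\int_{\mathbb{R}}\langle x\rangle^{-p}\langle x-\gamma\rangle^{-(q-p)}\,dx$ is bounded uniformly in $\gamma$ because the exponents sum to $q>1$; this is true (and standard), but it is itself a little lemma of the same flavour as \eqref{conta1}, so a one-line justification would not hurt. Alternatively one can avoid this by working on $A^{c}=\{|x|<|\gamma|/2\}$ instead of $B$: there $\langle x-\gamma\rangle\gtrsim\langle\gamma\rangle$, so the contribution is $\lesssim\langle\gamma\rangle^{-q}\int_{|x|<|\gamma|/2}\langle x\rangle^{-p}\,dx$, and a direct computation in the three cases $p>1$, $p=1$, $p<1$ gives the bound $\langle\gamma\rangle^{-p}$ with no appeal to an auxiliary fact. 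In \eqref{conta2} your near-root analysis tacitly assumes $a\gtrsim 1$ (otherwise ``length $\lesssim 1/a\lesssim 1$'' fails); for $a\le 1$ the whole region $|y|\le 2a$ has bounded measure and the integrand is $\le 1$, so this case is trivial, but it is worth saying so.
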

\begin{proof}
	See Lemma 2.3 in \cite{kenig-1996}.
\end{proof}

\section{Bilinear estimates for the coupling terms}\label{bilinear}

 The main results in this section are the following propositions which present the bilinear estimates for different values of $\sigma>0$.  Each case lead us to different restrictions on the Sobolev indices $s$ and $\kappa$.

\subsection{Bilinear estimates for $\sigma>2$}

Next we prove a new bilinear estimates when $\sigma>2$ ($\sigma= 1/a$).
\begin{prop}\label{p1}
	{Let} $0<a<\frac{1}{2}$ (equivalently $\sigma >2$), $u\in X^{\kappa,b}$ and $v\in X_a^{s,b}$ with $1/2<b<3/4$, \   $1/4<d<1/2$ and $|\kappa|-s\leq 1$, then the bilinear estimate holds	
	\begin{equation}\label{eq1}
		\left\|\overline{u}\cdot v \right\|_{X^{\kappa,-d}}\leq C \left\|u \right\|_{X^{\kappa,b}}\cdot \left\|v \right\|_{X^{s,b}}. 
	\end{equation}
	
\end{prop}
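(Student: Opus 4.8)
The plan is to pass to the Fourier side and reduce the estimate to a weighted $L^2$ bound over the convolution structure dictated by the nonlinearity $\overline{u}\cdot v$. Writing $f(\tau,\xi)=\langle\xi\rangle^{\kappa}\langle\tau+\xi^2\rangle^{b}\widehat{u}(\tau,\xi)$ (note that $\overline{u}$ has Fourier transform $\overline{\widehat{u}(-\tau,-\xi)}$, so the relevant weight for $\overline{u}$ is $\langle -\tau+\xi^2\rangle^{b}=\langle\tau-\xi^2\rangle^{b}$) and $g(\tau,\xi)=\langle\xi\rangle^{s}\langle\tau+a\xi^2\rangle^{b}\widehat{v}(\tau,\xi)$, both nonnegative in $L^2_{\tau,\xi}$, the estimate \eqref{eq1} becomes, by duality against a nonnegative $h\in L^2$ with $\|h\|_{L^2}=1$,
\begin{equation*}
\int \frac{\langle\xi\rangle^{\kappa}\, h(\tau,\xi)\, f(\tau_1,\xi_1)\, g(\tau_2,\xi_2)}{\langle\tau+\xi^2\rangle^{d}\,\langle\xi_1\rangle^{\kappa}\langle\tau_1-\xi_1^2\rangle^{b}\,\langle\xi_2\rangle^{s}\langle\tau_2+a\xi_2^2\rangle^{b}}\,d\nu \le C\,\|f\|_{L^2}\|g\|_{L^2},
\end{equation*}
where $d\nu$ denotes integration over the hyperplane $\xi=\xi_1+\xi_2$ (with a sign convention coming from $\overline{u}$; effectively $-\xi_1+\xi_2=\xi$ after relabeling) and $\tau=\tau_1+\tau_2$. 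The first step is therefore to record the algebraic resonance identity: with the three ``modulation'' variables $\sigma_0=\tau+\xi^2$, $\sigma_1=\tau_1-\xi_1^2$, $\sigma_2=\tau_2+a\xi_2^2$, one computes $\sigma_0-\sigma_1-\sigma_2$ as an explicit quadratic polynomial in $\xi_1,\xi_2$ (using $a<1/2$), which will be the quantity that is large whenever all three modulations cannot simultaneously be small. This identity is the engine of the whole argument.

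Next I would split the region of integration according to which of $\langle\sigma_0\rangle,\langle\sigma_1\rangle,\langle\sigma_2\rangle$ is the largest, and in each case use that the maximal modulation dominates (a power of) the resonance polynomial. After using Cauchy--Schwarz in the appropriate pair of variables, each piece reduces to showing that a certain ``convolution weight''
\begin{equation*}
\sup_{(\tau,\xi)} \ \langle\xi\rangle^{2\kappa}\int \frac{d\tau_1\,d\xi_1}{\langle\tau+\xi^2\rangle^{2d}\langle\xi_1\rangle^{2\kappa}\langle\tau_1-\xi_1^2\rangle^{2b}\langle\xi-\xi_1\rangle^{2s}\langle\tau-\tau_1+a(\xi-\xi_1)^2\rangle^{2b}}
\end{equation*}
(and its analogues with the roles of the variables permuted) is finite, under the constraints $1/2<b<3/4$, $1/4<d<1/2$, $|\kappa|-s\le 1$. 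The $\tau_1$-integral is handled first by Lemma \ref{l2.2}, inequality \eqref{conta1}, which collapses the two modulation factors into a single factor $\langle L\rangle^{-2\min(b,d)}$ or similar, where $L$ is an affine-then-quadratic expression in $\xi_1$; then the remaining $\xi_1$-integral is estimated using \eqref{conta1} again together with the elementary calculus inequality \eqref{conta2} (completing the square in $\xi_1$). The weight gain from the resonance identity is what must beat the loss $\langle\xi\rangle^{2\kappa}\langle\xi-\xi_1\rangle^{-2s}$; tracking the exponents carefully is where the precise hypotheses $|\kappa|-s\le 1$ and the admissible ranges of $b,d$ get used.

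I expect the main obstacle to be the low-frequency / high-frequency interaction regions where $\kappa<0$ and the factor $\langle\xi\rangle^{\kappa}$ in the numerator is harmless but $\langle\xi_1\rangle^{-\kappa}$ (from the denominator, with $-\kappa>0$) combined with $\langle\xi-\xi_1\rangle^{-s}$ can be singular when $\xi_1$ or $\xi-\xi_1$ is small — these are exactly the cases that force the constraint $|\kappa|-s\le 1$ and that do not appear in the classical single-equation analysis of \cite{kenig-1996}. In those regions one cannot simply bound the frequency weights by constants; instead one must use that when, say, $|\xi_1|\ll 1$, the resonance polynomial $\sigma_0-\sigma_1-\sigma_2$ is controlled by $|\xi_2|^2\sim|\xi|^2$ up to the factor $(1-2a)$, so the modulation gain supplies the missing powers of $\langle\xi\rangle$. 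A secondary technical point is that $a$ appears in the quadratic coefficients, so one must keep the constants uniform for $a$ in compact subintervals of $(0,1/2)$ — which is automatic as long as $1-2a$ stays bounded below, i.e. away from $a=1/2$; the borderline $a=1/2$ ($\sigma=2$) is genuinely excluded here and treated separately. Once all regions are disposed of, summing the finitely many pieces gives \eqref{eq1}.
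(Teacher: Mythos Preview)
Your plan is correct and follows essentially the same route as the paper: pass to the Fourier side, dualize, split $\mathbb{R}^4$ into three regions according to which of $\langle\sigma_0\rangle,\langle\sigma_1\rangle,\langle\sigma_2\rangle$ is maximal, apply Cauchy--Schwarz in each region, integrate out the inner $\tau$-variable via \eqref{conta1}, and finish the remaining one-dimensional $\xi$-integral with \eqref{conta2}.

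Two points where the paper is simpler than what you anticipate. First, the frequency weights are handled uniformly in the sign of $\kappa$ by the single elementary inequality $\langle\xi\rangle^{2\kappa}\langle\xi_1\rangle^{-2\kappa}\le\langle\xi_2\rangle^{2|\kappa|}$ (valid for all real $\kappa$ since $\xi=\xi_1+\xi_2$), so no separate low-/high-frequency analysis for $\kappa<0$ is needed here; that complication only arises in Proposition~\ref{p2}. Second, for $0<a<\tfrac12$ the resonance function is \emph{globally} nonresonant: the paper records
\[
|\sigma_0-\sigma_1-\sigma_2|=|\xi^2+\xi_1^2-a\xi_2^2|\ge(1-2a)(\xi^2+\xi_1^2)\ge\tfrac{1-2a}{4}\,\xi_2^2,
\]
so there is no zero set to work around (unlike the $a>\tfrac12$ case in Proposition~\ref{p3}). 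With those two observations in hand, the three reduced integrals $J_1,J_2,J_3$ are bounded directly: the constraint $|\kappa|-s\le 1$ comes from requiring $2(|\kappa|-s)-4d\le 0$ in the $J_1$ piece (since $d<\tfrac12$), while the restriction $d>\tfrac14$ is what makes \eqref{conta2} applicable to $J_2$ and $J_3$.
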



The second result is the following
\begin{prop}\label{p2}
	{Let} $0<a<\frac{1}{2}$ (equivalently $\sigma >2$)  and  $u$, $ \tilde{u}\in X^{\kappa,b}$  with $1/2<b<3/4$,\   $1/4<d<1/2$ and  $s< \kappa +1$ if $\kappa \geq 0$ and $s<2\kappa +1$ if $\kappa <0$ then, the following estimate holds
	
	\begin{equation}\label{eq2}
		\left\|u\cdot \tilde{u} \right\|_{X^{s,-d}_a}\leq C \left\|u \right\|_{X^{\kappa,b}}\cdot \left\|\tilde{u} \right\|_{X^{\kappa,b}}.
	\end{equation}
	
\end{prop}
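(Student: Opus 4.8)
\textbf{Proof plan for Proposition \ref{p2}.}

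The plan is to pass to the Fourier side and reduce the estimate \eqref{eq2} to a purely analytic weighted convolution inequality, exactly in the spirit of Kenig--Ponce--Vega \cite{kenig-1996} and Corcho--Matheus \cite{corcho-2009}. Writing $f(\tau,\xi)=\langle\xi\rangle^{\kappa}\langle\tau+\xi^2\rangle^{b}\widehat{u}(\tau,\xi)$ and similarly $g$ for $\tilde u$, both in $L^2_{\tau,\xi}$, the quantity $\left\|u\cdot\tilde u\right\|_{X^{s,-d}_a}$ becomes, after duality against $h\in L^2$ with $\|h\|_{L^2}=1$,
\begin{equation*}
	\int_{*}\frac{\langle\xi\rangle^{s}\,h(\tau,\xi)\,f(\tau_1,\xi_1)\,g(\tau_2,\xi_2)}{\langle\tau+a\xi^2\rangle^{d}\langle\xi_1\rangle^{\kappa}\langle\tau_1+\xi_1^2\rangle^{b}\langle\xi_2\rangle^{\kappa}\langle\tau_2+\xi_2^2\rangle^{b}}\,d\nu,
\end{equation*}
where $d\nu$ denotes integration over $\xi=\xi_1+\xi_2$, $\tau=\tau_1+\tau_2$. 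So I must bound the multiplier
\begin{equation*}
	M=\frac{\langle\xi\rangle^{s}}{\langle\xi_1\rangle^{\kappa}\langle\xi_2\rangle^{\kappa}}\cdot\frac{1}{\langle\sigma_0\rangle^{d}\langle\sigma_1\rangle^{b}\langle\sigma_2\rangle^{b}},\qquad \sigma_0:=\tau+a\xi^2,\ \sigma_1:=\tau_1+\xi_1^2,\ \sigma_2:=\tau_2+\xi_2^2,
\end{equation*}
and show the associated trilinear form is $O(\|f\|_{L^2}\|g\|_{L^2})$. The key algebraic identity is the \emph{resonance relation}: on the support, $\sigma_0-\sigma_1-\sigma_2 = a\xi^2-\xi_1^2-\xi_2^2 = (a-1)\xi^2 - 2\xi_1\xi_2$ (using $\xi_1^2+\xi_2^2=\xi^2-2\xi_1\xi_2$), so at least one of $|\sigma_0|,|\sigma_1|,|\sigma_2|$ is $\gtrsim |(a-1)\xi^2-2\xi_1\xi_2|$. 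Since $0<a<1/2$, the factor $a-1$ is bounded away from $0$, which is precisely what makes $\sigma>2$ favourable.

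The next step is a standard case analysis according to which of the three modulation weights $\langle\sigma_j\rangle$ is the largest. In each branch I use part of the large weight (keeping exponent $b$ or $d$ on the other two, leaving enough to run Cauchy--Schwarz) to absorb the algebraic gain from the resonance relation, and then integrate out one of the frequency variables using Lemma \ref{l2.2}: the $\langle\cdot\rangle^{-2b}$ and $\langle\cdot\rangle^{-2d}$ factors with $b>1/2$, $d>1/4$ are integrable against the quadratic phase via \eqref{conta2}, or via \eqref{conta1} after completing the square. The frequency (elliptic) part $\langle\xi\rangle^s\langle\xi_1\rangle^{-\kappa}\langle\xi_2\rangle^{-\kappa}$ is controlled by splitting into the regions where the frequencies are comparable versus where one dominates; here one exploits $\langle\xi\rangle^s\le\langle\xi\rangle^{s}$ with $s<\kappa+1$ (for $\kappa\ge0$) or $s<2\kappa+1$ (for $\kappa<0$) to trade the extra $\langle\xi\rangle$ powers against the $1/4$ or $1/2$ power of modulation gained from $|\xi_1\xi_2|$ or $\xi^2$ in the resonance term. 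Roughly: when $\kappa\ge 0$ one has $\langle\xi\rangle^s\lesssim\langle\xi_1\rangle^{\kappa}\langle\xi_2\rangle^{\kappa}\langle\xi\rangle^{s-2\kappa}$ only in the high-high regime and one needs $s-2\kappa$ compensated, whereas in the high-low regime $\langle\xi\rangle\sim\langle\xi_{\max}\rangle$ and one needs $s-\kappa<1$; the $\kappa<0$ case is dual and forces the $s<2\kappa+1$ threshold.

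I expect the main obstacle to be the \textbf{high-high interaction combined with small output frequency}, i.e. $|\xi_1|\sim|\xi_2|\sim N$ large but $|\xi|=|\xi_1+\xi_2|$ small. There the resonance term $(a-1)\xi^2-2\xi_1\xi_2\approx -2\xi_1\xi_2\sim -N^2$ is large, which helps; but the elliptic factor is $\langle\xi\rangle^s\langle N\rangle^{-2\kappa}$, and if $s<0$ this blows up only mildly while $-2\kappa$ could be negative (when $\kappa>0$) — one must check that the $N$-power budget closes, which is exactly where the hypothesis $s<2\kappa+1$ (equivalently, using the full strength $|\kappa|-s\le 1$-type balance) is consumed. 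A secondary delicate point is keeping the modulation exponents in the admissible window $1/2<b<3/4$, $1/4<d<1/2$ so that Lemma \ref{l2.2} applies with strictly more than the critical exponent in \emph{both} the line integral \eqref{conta1} and the quadratic integral \eqref{conta2}; this is a bookkeeping matter but must be done carefully in the branch where $\langle\sigma_0\rangle$ is dominant, since then only $d<1/2$ (not $b$) is available on that factor. Once all branches are closed, summing the finitely many cases and undoing the duality gives \eqref{eq2}.
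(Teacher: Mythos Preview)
Your plan follows essentially the same route as the paper's proof: duality, reduction to bounding a weighted trilinear form, case split according to which modulation $\langle\sigma_j\rangle$ dominates, and use of Lemma~\ref{l2.2} to close each branch; the paper organizes this as the boundedness of three functionals $J_4,J_5,J_6$ analogous to \eqref{j1}--\eqref{j3}, and handles the frequency weight $\langle\xi\rangle^{2s}\langle\xi_1\rangle^{-2\kappa}\langle\xi_2\rangle^{-2\kappa}$ by the same high--low/high--high decomposition you sketch, including the delicate region $|\xi|<\tfrac12|\xi_2|$, $|\xi_1|\sim|\xi_2|$ that you correctly flag.

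One point deserves correction. You write that ``the factor $a-1$ is bounded away from $0$, which is precisely what makes $\sigma>2$ favourable.'' That is not the relevant feature: $a-1\neq 0$ holds for every $a\neq 1$, including the harder range $a>1/2$. What actually distinguishes $0<a<1/2$ is that the resonance function is \emph{coercive}: the paper shows
\[
|\sigma_0-\sigma_1-\sigma_2|=|a\xi^2-\xi_1^2-\xi_2^2|\ \ge\ (1-2a)\,(\xi_1^2+\xi_2^2)\ \ge\ \tfrac{1-2a}{4}\,\xi^2,
\]
so the resonance never vanishes off the origin and one gains a full $\xi^2$ (equivalently $\max\{\xi_1^2,\xi_2^2\}$) uniformly. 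Your expression $(a-1)\xi^2-2\xi_1\xi_2$ (the sign on the cross term should be $+$, incidentally) does not by itself exhibit this; you recover it case by case, but the clean coercive bound is both the correct explanation of why $\sigma>2$ is easier and the engine that drives the paper's uniform estimates on $J_4,J_5,J_6$. With that adjustment, your plan matches the paper's argument.
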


\medskip
\begin{proof}[Proof of the Proposition \ref{p1}]
	
	We define $$f(\xi,\tau)=\langle \tau-\xi^2\rangle^b\langle \xi\rangle^{\kappa}\widehat{\overline{u}}(\xi,\tau) \ \mbox{ and }\ g(\xi,\tau)=\langle \tau+a\xi^2\rangle^b\langle \xi\rangle^s\widehat{v}(\xi,\tau).$$
	
	Therefore, $\|f\|_{L^2_{\xi,\tau}}=\|u\|_{X^{{\kappa},b}}$ \ and\  $\|g\|_{L^2_{\xi,\tau}}=\|v\|_{X^{s,b}_a}$.\\
	It follows that,
	\begin{eqnarray*}
		\left\| \overline{u}\cdot v\right\|_{X^{{\kappa},-d}}& & = \left\| \langle \tau+\xi^2 \rangle^{-d} \langle \xi \rangle^{\kappa} \ \widehat{\overline{u}\cdot v}(\xi,\tau)\right\|_{L^2_{\xi,\tau}} \\
		&  &\hspace{-1cm}= \displaystyle\sup_{\|\varphi\|_{L^2_{\xi,\tau}\leq 1}} \left| \displaystyle\int_{\mathbb{R}^4} \dfrac{\langle \xi \rangle^{\kappa}\langle \xi_1 \rangle^{-{\kappa}}\langle \xi_2 \rangle^{-s}}{\langle\tau+\xi^2\rangle^{d}\langle\tau_1-\xi_1^2\rangle^b\langle\tau_2+a\xi_2\rangle^b}f(\xi_1,\tau_1)g(\xi_2,\tau_2)  \varphi(\xi,\tau)d\xi_2 d\tau_2 d\xi d\tau \right|.\\
	\end{eqnarray*}
	{We use the following notation:}
	\begin{equation}\label{3.2}
	\left\lbrace 
	\begin{array}{lll}
	\tau=\tau_1+\tau_2 & \xi=\xi_1+\xi_2&     \\
	\omega=\tau+\xi^2, & \omega_1=\tau_1-\xi_1^2,& \omega_2=\tau_2+a\xi_2^2
	\end{array}
	\right.
	\end{equation}
	{and we define}
	$$W(f,g,\varphi)= \displaystyle\int_{\mathbb{R}^4} \dfrac{\langle \xi \rangle^{\kappa}\langle \xi_1 \rangle^{-{\kappa}}\langle \xi_2 \rangle^{-s}}{\langle\omega\rangle^{d}\langle\omega_1\rangle^b\langle\omega_2\rangle^b}f(\xi_1,\tau_1)g(\xi_2,\tau_2)  \varphi(\xi,\tau)d\xi_2 d\tau_2 d\xi d\tau. $$
	{Now} it is suffices to prove that $$\left| W(f,g,\varphi)\right| \leq c \ \|f\|_{L^2}\cdot\|g\|_{L^2}\cdot\|\varphi\|_{L^2}.$$
	Consider $\mathbb{R}^4\subset \mathcal{R}_1\cup \mathcal{R}_2 \cup \mathcal{R}_3$, where $\mathcal{R}_j \subset \mathbb{R}^4$ for $j \in \{1,2,3\}.$ 
	We write 
	$$W_j=(f,g,\varphi)= \displaystyle\int_{\mathcal{R}_j} \dfrac{\langle \xi \rangle^{\kappa}\langle \xi_1 \rangle^{-{\kappa}}\langle \xi_2 \rangle^{-s}}{\langle\omega\rangle^{d}\langle\omega_1\rangle^b\langle\omega_2\rangle^b}f(\xi_1,\tau_1)g(\xi_2,\tau_2)  \varphi(\xi,\tau)d\xi_2 d\tau_2 d\xi d\tau$$
	and observe that $|W|\leq |W_1| +|W_2|+ |W_3|$.

	We estimate each case separately. {Using the Cauchy-Schwarz and H\"older inequalities and Fubini’s Theorem  we obtain}
	
	\begin{eqnarray*}
		\left| W_1\right|^2 & & =\left| \displaystyle\int_{\mathcal{R}_1} \frac{\langle \xi \rangle^{\kappa}\langle \xi_1 \rangle^{-\kappa}\langle \xi_2 \rangle^{-s}}{\langle\omega\rangle^{d}\langle\omega_1\rangle^b\langle\omega_2\rangle^b}f(\xi_1,\tau_1)g(\xi_2,\tau_2)  \varphi(\xi,\tau)d\xi_2 d\tau_2 d\xi d\tau\right|^2\\
		&& \hspace{-1.8cm} \leq \left\| f \right\|_{L^2}^2\left\| g \right\|_{L^2}^2\left\| \varphi \right\|_{L^2}^2 \left\| \frac{\langle \xi\rangle^{2{\kappa}}}{\langle\omega\rangle^{2d}}  \left(\displaystyle\int_{\mathbb{R}^2}\frac{\langle \xi_1\rangle^{-2{\kappa}} \langle \xi_2\rangle^{-2s}\chi_{\mathcal{R}_1}d\xi_2 d\tau_2}{\langle\omega_1\rangle^{2b}\langle\omega_2\rangle^{2b}}\right)\right\|_{L^{\infty}_{\xi,\tau}}.
	\end{eqnarray*}
	Similarly, we have
	
	\begin{eqnarray*}
		\left| W_2\right|^2 \leq \left\| f \right\|_{L^2}^2\left\|  g \right\|_{L^2}^2\left\| \varphi \right\|_{L^2}^2 \left\| \frac{\langle \xi_2\rangle^{2s}}{\langle\omega_2\rangle^{2b}}  \left(\displaystyle\int_{\mathbb{R}^2}\frac{\langle \xi_1\rangle^{-2{\kappa}} \langle \xi\rangle^{2{\kappa}}}{\langle\omega_1\rangle^{2b}\langle\omega\rangle^{2d}}\chi_{\mathcal{R}_2}d\xi d\tau\right)\right\|_{L^{\infty}_{\xi_2,\tau_2}}
	\end{eqnarray*}
	and 
	\begin{eqnarray*}
		\left| W_3\right|^2 \leq \left\| f \right\|_{L^2}^2\left\| g \right\|_{L^2}^2\left\| \varphi \right\|_{L^2}^2 \left\| \frac{\langle \xi_1\rangle^{-2{\kappa}}}{\langle\omega_1\rangle^{2b}}  \left(\displaystyle\int_{\mathbb{R}^2}\frac{\langle \xi\rangle^{2{\kappa}} \langle \xi_2\rangle^{-2s}}{\langle\omega\rangle^{2d}\langle\omega_2\rangle^{2b}}\chi_{\mathcal{R}_3}d\xi_2 d\tau_2\right)\right\|_{L^{\infty}_{\xi_1,\tau_1}}.
	\end{eqnarray*}
	{Using  Lemma \ref{conta2}  and the fact $\langle \xi\rangle^{2{\kappa}}\langle\xi_1 \rangle^{-2{\kappa}}\leq \langle \xi_2\rangle^{2|{\kappa}|}$, 
		we get the following inequalities:}
	\begin{equation*}
	\dfrac{\langle \xi\rangle^{2{\kappa}}}{\langle\omega\rangle^{2d}}  \displaystyle\int_{\mathbb{R}^2}\dfrac{\langle \xi_1\rangle^{-2{\kappa}} \langle \xi_2\rangle^{-2s}\chi_{\mathcal{R}_1}}{\langle\omega_1\rangle^{2b}\langle\tau_2+a \xi_2^2\rangle^{2b}}d\xi_2 d\tau_2\leq \underbrace{\dfrac{1}{\langle\omega\rangle^{2d}}\displaystyle\int_{\mathbb{R}}\dfrac{\langle\xi_2\rangle^{-2s+2|{\kappa}|}\chi_{\mathcal{R}_1}}{\langle\tau-(a-1)\xi_2^2-2\xi\xi_2+\xi^2\rangle^{2b}}d\xi_2}_{J_1},
	\end{equation*}
	
	\begin{equation*}
	\dfrac{\langle \xi_2\rangle^{2s}}{\langle\omega_2\rangle^{2b}}  \displaystyle\int_{\mathbb{R}^2}\dfrac{\langle \xi_1\rangle^{-2{\kappa}} \langle \xi\rangle^{2{\kappa}}\chi_{\mathcal{R}_2}}{\langle\omega_1\rangle^{2b}\langle\omega\rangle^{2d}}d\xi d\tau\leq \underbrace{\dfrac{1}{\langle\tau_2+a\xi_2^2\rangle^{2b}}\displaystyle\int_{\mathbb{R}}\dfrac{\langle\xi_2\rangle^{-2s+2|{\kappa}|}\chi_{\mathcal{R}_2}}{\langle\tau_2+2\xi^2+\xi_2^2-2\xi\xi_2\rangle^{2d}}d\xi}_{J_2}, 
	\end{equation*}
	
	\begin{equation*}
	\dfrac{\langle \xi_1\rangle^{-2{\kappa}}}{\langle\omega_1\rangle^{2b}}  \displaystyle\int_{\mathbb{R}^2}\dfrac{\langle \xi\rangle^{2\kappa} \langle \xi_2\rangle^{-2s}\chi_{\mathcal{R}_3}}{\langle\omega\rangle^{2d}\langle\omega_2\rangle^{2b}}d\xi_2 d\tau_2\leq\underbrace{ \dfrac{1}{\langle\tau_1-\xi_1^2\rangle^{2b}}\displaystyle\int_{\mathbb{R}}\dfrac{\langle\xi_2\rangle^{-2s+2|{\kappa}|}\chi_{\mathcal{R}_3}}{\langle\tau_1-a\xi_2^2+\xi^2\rangle^{2d}}d\xi_2}_{J_3}. 
	\end{equation*}

	It is enough to show that the functionals $J_1$, $J_2$ and $J_3$, defined below, are bounded.
	\begin{enumerate}
		\item[] \begin{equation}\label{j1} J_1= \dfrac{1}{\langle\tau+\xi^2\rangle^{2d}}\displaystyle\int_{\mathbb{R}}\dfrac{\langle\xi_2\rangle^{-2s+2|{\kappa}|}\chi_{\mathcal{R}_1}}{\langle\tau-(a-1)\xi_2^2-2\xi\xi_2+\xi^2\rangle^{2b}}d\xi_2;	\end{equation}
		\item[] \begin{equation}\label{j2} J_2= \dfrac{1}{\langle\tau_2+a\xi_2^2\rangle^{2b}}\displaystyle\int_{\mathbb{R}}\dfrac{\langle\xi_2\rangle^{-2s+2|{\kappa}|}\chi_{\mathcal{R}_2}}{\langle\tau_2+2\xi^2+\xi_2^2-2\xi\xi_2\rangle^{2d}}d\xi;	\end{equation}
		\item[] 	\begin{equation}\label{j3} J_3= \dfrac{1}{\langle\tau_1-\xi_1^2\rangle^{2b}}\displaystyle\int_{\mathbb{R}}\dfrac{\langle\xi_2\rangle^{-2s+2|{\kappa}|}\chi_{\mathcal{R}_3}}{\langle\tau_1-a\xi_2^2+\xi^2\rangle^{2d}}d\xi_2.	\end{equation}
	\end{enumerate}  

	
	In order to do so,  
	{we start by discussing} the dispersion of relations.
	Note that 
	\begin{eqnarray*}
		|\omega-\omega_1-\omega_2|&=& |\xi^2+\xi_1^2-a\xi_2^2|\\
		&\geq & |1-a|(\xi^2+\xi_1^2)-2a|\xi\xi_1|, \ \ \mbox{ suppose } 0<a<\frac{1}{2}\\
		&\geq & (1-a)(\xi^2+\xi_1^2)-a(\xi^2+\xi_1^2)=(1-2a)(\xi^2+\xi_1^2).
	\end{eqnarray*}
	It follows that, $$3\max \{|\omega|,|\omega_1|, |\omega_2|\}\geq (1-2a)\max\{\xi^2,\xi_1^2\}\geq \frac{1-2a}{4} \xi_2^2.$$
	Suppose that $|\xi_2|\geq 1$, {then} we have
	$$\dfrac{1}{\max \{|\omega|,|\omega_1|, |\omega_2|\}}\leq  \dfrac{c}{|\xi_2|^2}.$$
	Now, we define $\mathcal{R}_j$. 
	\begin{equation}
		\mathcal{R}_1= \bigg\lbrace |\xi_2|\geq 1, |\omega|=\max \{|\omega|,|\omega_1|, |\omega_2|\}\bigg\rbrace\cup\bigg\{|\xi_2| \leq 1\bigg\}\subset \mathbb{R}^4_{\xi,\tau,\xi_2,\tau_2};
	\end{equation}
	
	\begin{equation}
		\mathcal{R}_2= \bigg\{ |\xi_2|\geq 1, |\omega_1|=\max \{|\omega|,|\omega_1|, |\omega_2|\}\bigg\}\subset \mathbb{R}^4_{\xi,\tau,\xi_2,\tau_2};
	\end{equation}
	
	\begin{equation}
		\mathcal{R}_3= \bigg\{ |\xi_2|\geq 1, |\tau_2+a\xi^2|=\max \{|\omega|,|\omega_1|, |\omega_2|\}\bigg\}\subset \mathbb{R}^4_{\xi,\tau,\xi_2,\tau_2}.
	\end{equation}
	Let us prove that $J_1$ is bounded. Indeed, if $|\xi_2|\leq 1$ then $J_1$ is equivalent to $$\dfrac{1}{\langle\omega\rangle^{2d}}\displaystyle\int_{|\xi_2|\leq 1}\dfrac{1}{\langle\tau-(a-1)\xi_2^2-2\xi\xi_2+\xi^2\rangle^{2b}}d\xi_2\leq c.$$ If $|\xi_2|\geq 1$ then $J_1$ is bounded by  $$\displaystyle\int_{|\xi_2|\geq 1}\dfrac{\langle\xi_2\rangle^{-2s+2|{\kappa}|+4d}\chi_{\mathcal{R}_1}}{\langle\tau-(a-1)\xi_2^2-2\xi\xi_2+\xi^2\rangle^{2b}}d\xi_2.$$ Note that $J_1$ is bounded when $|{\kappa}|-s\leq 2d<1$ because $b>1/2$.
	
	To prove that $J_2$ is bounded, it is suffices to note that the integral below is higher than $J_2$ and that converges since $|{\kappa}|-s\leq 2b$ and that $2d>1/2 $, that is, $b<3/4$.
	$$ \displaystyle\int_{\mathbb{R}}\dfrac{\langle\xi_2\rangle^{-2s+2|{\kappa}|-4b}\chi_{\mathcal{R}_2}}{\langle\tau_2+2\xi^2+\xi_2^2-2\xi\xi_2\rangle^{2d}}d\xi.$$
	
	Analogously, a similar way, we can prove that $J_3$ is bounded, by using that $|{\kappa}|-s\leq 2b$ and $b<3/4$.

\end{proof}

Now we prove that the second non-linear term of the system is bounded.

\begin{proof}[Proof of the Proposition \ref{p2}]

	Analogously to the previous proposition, the estimate (\ref{eq2}) is equivalent
	to prove that the functionals $J_4$, $J_5$ and $J_6$, defined below, are bounded
		\begin{enumerate}
		\item[] \begin{equation}\label{j4} J_4= \dfrac{1}{\langle\lambda\rangle^{2d}}\displaystyle\int_{\mathbb{R}}\dfrac{\langle \xi\rangle^{2s}\langle \xi_1\rangle^{-2{\kappa}} \langle \xi_2\rangle^{-2{\kappa}}\chi_{\mathcal{S}_1}}{\langle\tau+\xi_2^2-2\xi\xi_2+\xi^2\rangle^{2b}}d\xi_2;\end{equation}
		\item[]\begin{equation}\label{j5} J_5= \dfrac{1}{\langle\lambda_2\rangle^{2b}}\displaystyle\int_{\mathbb{R}}\dfrac{\langle \xi\rangle^{2s}\langle \xi_1\rangle^{-2{\kappa}} \langle \xi_2\rangle^{-2{\kappa}}\chi_{\mathcal{S}_2}}{\langle\tau_2+(a-1)\xi^2-\xi_2^2+2\xi\xi_2\rangle^{2d}}d\xi;\end{equation}
		\item[] \begin{equation}\label{j6} J_6= \dfrac{1}{\langle\lambda_1\rangle^{2b}}\displaystyle\int_{\mathbb{R}}\dfrac{\langle \xi\rangle^{2s}\langle \xi_1\rangle^{-2{\kappa}} \langle \xi_2\rangle^{-2{\kappa}}\chi_{\mathcal{S}_3}}{\langle\tau_1+a\xi^2+\xi_2^2\rangle^{2d}}d\xi_2,\end{equation}
	\end{enumerate}
 where  $\mathcal{S}_1\cup\mathcal{S}_2\cup\mathcal{S}_3=\mathbb{R}^4$ with $\mathcal{S}_j$ measurable.
	
	Note that 
	\begin{eqnarray*}
		|\lambda-\lambda_1-\lambda_2|&=& |a\xi^2-\xi_1^2-\xi_2^2|\\
		&\geq & |1-a|(\xi_1^2+\xi_2^2)-2a|\xi_1\xi_2|, \ \ \mbox{ suppose } 0<a<\frac{1}{2}\\
		&\geq & (1-a)(\xi_1^2+\xi_2^2)-a(\xi_1^2+\xi_2^2)=(1-2a)(\xi_1^2+\xi_2^2),
	\end{eqnarray*}
	indeed $\xi=\xi_1+\xi_2$, such that $|\xi|\leq |\xi_1|+|\xi_2|\leq 2\max \{\xi_1,\xi_2\}.$ 
	Hence, $$3\max \{|\lambda|,|\lambda_1|, |\lambda_2|\}\geq (1-2a)\max\{\xi_1^2,\xi_2^2\}\geq \frac{1-2a}{4} \xi^2.$$
	Therefore, supposing that $|\xi|\geq 1$, we have
	$$\dfrac{1}{\max \{|\lambda|,|\lambda_1|, |\lambda_2|\}}\leq  \dfrac{c}{|\xi|^2}.$$
	Now, we define the regions $\mathcal{S}_i$. 
	\begin{equation}
		\mathcal{S}_1= \bigg\{ |\xi|\geq 1, |\lambda|=\max \{|\lambda|,|\lambda_1|, |\lambda_2|\}\bigg\}\cup\bigg\{|\xi| \leq 1\bigg\}\subset \mathbb{R}^4_{\xi,\tau,\xi_2,\tau_2},
	\end{equation}
	\begin{equation}
		\mathcal{S}_2= \bigg\{ |\xi|\geq 1, |\lambda_1|=\max \{|\lambda|,|\lambda_1|, |\lambda_2|\}\bigg\}\subset \mathbb{R}^4_{\xi,\tau,\xi_2,\tau_2},
	\end{equation}
	\begin{equation}
		\mathcal{S}_3= \bigg\{ |\xi|\geq 1, |\tau_2+\xi^2|=\max \{|\lambda|,|\lambda_1|, |\lambda_2|\}\bigg\}\subset \mathbb{R}^4_{\xi,\tau,\xi_2,\tau_2}.
	\end{equation}
	For $\kappa\geq 0$, we have 
	$\langle \xi_1 \rangle^{-2\kappa} \langle \xi_2 \rangle^{-2\kappa}\leq \langle \xi \rangle^{-2\kappa}$ and in this case 
	\begin{equation}
		J_4 \ \leq \displaystyle\int_{\mathbb{R}}\dfrac{\langle \xi\rangle^{2s-2{\kappa}+4d}\chi_{\mathcal{S}_1}}{\langle\tau+\xi_2^2-2\xi\xi_2+\xi^2\rangle^{2b}}d\xi_2.
	\end{equation}  
	Therefore, $J_4$ is bounded since $s-{\kappa}+2d<0$ for $s-{\kappa}<2d$.\\
	Note that $J_5$ and $J_6$ satisfies,
	\begin{equation}
		J_5 \ \leq \displaystyle\int_{\mathbb{R}}\dfrac{\langle \xi\rangle^{2s-2{\kappa}-4b}\chi_{\mathcal{S}_2}}{\langle\tau_2+(a-1)\xi^2-\xi_2^2+2\xi\xi_2\rangle^{2d}}d\xi
	\end{equation}  
	and
	\begin{equation}
		J_6 \ \leq \displaystyle\int_{\mathbb{R}}\dfrac{\langle \xi\rangle^{2s-2{\kappa}-4b}\chi_{\mathcal{S}_3}}{\langle\tau_1+a\xi^2+\xi_2^2\rangle^{2d}}d\xi_2,
	\end{equation}
	and that they are bounded since $s-{\kappa}<2b$ and $2d> \frac{1}{2}$, that is, $b<\frac{3}{4}.$

	When $\kappa<0$ we analyse the following subcases:
	\begin{enumerate}
		\item Considering $|\xi_1|\leq \frac{2}{3}|\xi_2|$: we have $\langle\xi_1\rangle^{-2\kappa}\langle\xi_2\rangle^{-2\kappa}\leq \langle \xi_2\rangle^{-4\kappa}$. Moreover, $|\xi_2|\leq |\xi_1|+|\xi|\leq \frac{2|\xi_2|}{3}+|\xi|$, hence $|\xi_2|\leq 3|\xi|$. Therefore, $$\langle \xi\rangle^{2s}\langle \xi_1\rangle^{-2\kappa} \langle \xi_2\rangle^{-2\kappa}\leq \langle \xi\rangle^{2s-4\kappa}.$$
		
		\item Supposing  $|\xi_2|\leq \frac{2}{3}|\xi_1|$, we have,  $$\langle \xi\rangle^{2s}\langle \xi_1\rangle^{-2\kappa} \langle \xi_2\rangle^{-2\kappa}\leq \langle \xi\rangle^{2s-4\kappa}.$$
		
		\item The last case, $\frac{2}{3}|\xi_2|<|\xi_1|<\frac{3}{2}|\xi_2|$.
		\begin{enumerate}
			\item If $\xi_1,\ \xi_2\geq 0$ then $ \frac{2}{3}\xi_2<\xi_1<\frac{3}{2}\xi_2\Longrightarrow \frac{5}{3}\xi_2<\xi <\frac{5}{2}\xi_2$. Hence, $$\langle \xi\rangle^{2s}\langle \xi_1\rangle^{-2{\kappa} } \langle \xi_2\rangle^{-2{\kappa} }\leq \langle \xi\rangle^{2s-4\kappa}.$$
			\item If $\xi_1,\ \xi_2\leq 0$ then $ \frac{-2}{3}\xi_2<-\xi_1<\frac{-3}{2}\xi_2\Longrightarrow \frac{-5}{3}\xi_2<-\xi <\frac{-5}{2}\xi_2$, thus $|\xi_2|<\frac{3}{5}|\xi|$. Hence, $$\langle \xi\rangle^{2s}\langle \xi_1\rangle^{-2{\kappa} } \langle \xi_2\rangle^{-2{\kappa} }\leq \langle \xi\rangle^{2s-4\kappa}.$$
			\item If $\xi_1>0$ and $\xi_2<0$ then $\frac{-2}{3}\xi_2<\xi_1<\frac{-3}{2}\xi_2\Longrightarrow \frac{1}{3}\xi_2<\xi <\frac{-1}{2}\xi_2\Longrightarrow |\xi|<\frac{1}{2}|\xi_2|$.
			\item If $\xi_1<0$ and $\xi_2>0$ then $\frac{2}{3}\xi_2<-\xi_1<\frac{3}{2}\xi_2\Longrightarrow \frac{-1}{3}\xi_2<-\xi <\frac{1}{2}\xi_2$, consequently $|\xi|<\frac{1}{2}|\xi_2|$.
		\end{enumerate}
		
	\end{enumerate}
	The cases (1), (2), (3.a) and (3.b) are true  for $\kappa<0 $ and $s<2{\kappa}+1$.
	
	Indeed, given $\mathcal{A}\subset \mathbb{R}^4$ the set of the elements of $\mathbb{R}^4$ that satisfies one of the conditions (1), (2), (3.a) or (3.b), given $\mathcal{B}=\mathbb{R}^4 \setminus \mathcal{A}$.
	Now consider 
	$\mathcal{A}_{i}=\mathcal{S}_{i}\cap \mathcal{A}$ and $\mathcal{B}_{i}=\mathcal{S}_{i}\cap \mathcal{B}.$

	Analyzing the restrictions $\mathcal{A}_{i}$, we get:
	\begin{eqnarray*}
		J_4 &&= \dfrac{1}{\langle\lambda\rangle^{2d}}\displaystyle\int_{\mathbb{R}}\dfrac{\langle \xi\rangle^{2s}\langle \xi_1\rangle^{-2{\kappa} } \langle \xi_2\rangle^{ -2{\kappa} }\chi_{\mathcal{B}_{1}}}{\langle\tau+\xi_2^2-2\xi\xi_2+\xi^2\rangle^{2b}}d\xi_2\\
		&&\leq \displaystyle\int_{\mathbb{R}}\dfrac{\langle \xi\rangle^{2s-4\kappa-4d} \chi_{\mathcal{A}_{1}}}{\langle\tau+\xi_2^2-2\xi\xi_2+\xi^2\rangle^{2b}}d\xi_2.
	\end{eqnarray*}
	Then $J_4$ is bounded for $s\leq 2{\kappa}+2d$ and $b<3/4$.
	\begin{eqnarray*}
		J_5 &&= \dfrac{1}{\langle\lambda_2\rangle^{2b}}\displaystyle\int_{\mathbb{R}}\dfrac{\langle \xi\rangle^{2s}\langle \xi_1\rangle^{-2{\kappa} } \langle \xi_2\rangle^{-2{\kappa} }\chi_{\mathcal{A}_{2}}}{\langle\tau_2+(a-1)\xi^2-\xi_2^2+2\xi\xi_2\rangle^{2d}}d\xi\\
		&&\leq \displaystyle\int_{\mathbb{R}}\dfrac{\langle \xi\rangle^{2s-4\kappa-4b}\chi_{\mathcal{A}_{2}}}{\langle\tau_2+(a-1)\xi^2-\xi_2^2+2\xi\xi_2\rangle^{2d}}d\xi \ \ 
	\end{eqnarray*}
	and $J_5$ is bounded for $s\leq 2{\kappa}+2b$ and $1/2<b$.
	\begin{eqnarray*}
		J_6 &&= \dfrac{1}{\langle\lambda_1\rangle^{2b}}\displaystyle\int_{\mathbb{R}}\dfrac{\langle \xi\rangle^{2s}\langle \xi_1\rangle^{-2{\kappa} } \langle \xi_2\rangle^{-2{\kappa} }\chi_{\mathcal{A}_{3}}}{\langle\tau_1+a\xi^2+\xi_2^2\rangle^{2d}}d\xi_2\\
		&&\leq \displaystyle\int_{\mathbb{R}}\dfrac{\langle \xi\rangle^{2s-4\kappa-4b}\chi_{\mathcal{A}_{3}}}{\langle\tau_1+a\xi^2+\xi_2^2\rangle^{2d}}d\xi_2.
	\end{eqnarray*}
	Then $J_6$ is also bounded for $s\leq 2{\kappa}+2b$ and $1/2<b$.
	
	To analyze the remaining cases (which is equivalent to supposing $|\xi|<\frac{1}{2}|\xi_2|$ and $|\xi_1|\sim |\xi_2|$) let us consider
	them as regions $\mathcal{B}_{i}$: 
	
	We start by estimating $J_4$.
	\begin{eqnarray*}
		J_4 &&= \dfrac{1}{\langle\lambda\rangle^{2d}}\displaystyle\int_{\mathbb{R}}\dfrac{\langle \xi\rangle^{2s}\langle \xi_1\rangle^{-2{\kappa} } \langle \xi_2\rangle^{-2{\kappa} }\chi_{\mathcal{B}_{1}}}{\langle\tau+\xi_2^2-2\xi\xi_2+\xi^2\rangle^{2b}}d\xi_2\\
		&&\leq \displaystyle\int_{\mathbb{R}}\dfrac{\langle \xi\rangle^{2s-4d}\langle \xi_1\rangle^{-4\kappa} \chi_{\mathcal{B}_{1}}}{\langle\tau+\xi_2^2-2\xi\xi_2+\xi^2\rangle^{2b}}d\xi_2\\
		&& \leq \displaystyle\int_{\mathbb{R}}\dfrac{\langle \xi\rangle^{2s-4d}\langle \xi_1\rangle^{-4\kappa} \chi_{\mathcal{B}_{1}}}{2|\xi_2-\xi|\langle\eta \rangle^{2b}}d\eta
	\end{eqnarray*}
	Now, $|\xi_2-\xi|\geq |\xi_2|-|\xi|\geq \frac{1}{2}|\xi_2|\sim \frac{1}{2}|\xi_1|$.
	
	Hence,
	$J_4\leq \langle \xi\rangle^{2s-4d}\langle \xi_1\rangle^{-4\kappa-1}\displaystyle\int_{\mathbb{R}}\dfrac{d\eta}{\langle\eta \rangle^{2b}}$, that is bounded because $2b>1$ and $2s\leq 4\kappa+2$.
	$$\langle \xi\rangle^{2s-4d}\langle \xi_1\rangle^{-4\kappa-1}\leq \langle \xi \rangle^{2s-4\kappa-4d-1}.$$
	We continue to estimate $J_5$:
	\begin{eqnarray*}
		J_5 &&= \dfrac{1}{\langle\lambda_2\rangle^{2b}}\displaystyle\int_{\mathbb{R}}\dfrac{\langle \xi\rangle^{2s}\langle \xi_1\rangle^{-2{\kappa} } \langle \xi_2\rangle^{-2{\kappa} }\chi_{\mathcal{B}_{2}}}{\langle\tau_2+(a-1)\xi^2-\xi_2^2+2\xi\xi_2\rangle^{2d}}d\xi\\
		&&\leq  \dfrac{1}{\langle\lambda_2\rangle^{2b}}\displaystyle\int_{\mathbb{R}}\dfrac{\langle \xi\rangle^{2s}\langle \xi_2\rangle^{-4\kappa}\chi_{\mathcal{B}_{2}}}{\langle\tau_2+(a-1)\xi^2-\xi_2^2+2\xi\xi_2\rangle^{2d}}d\xi
	\end{eqnarray*}
	Setting $\eta=\tau_2+(a-1)\xi^2-\xi_2^2+2\xi\xi_2$, such that $d\eta=2(\xi_2+(a-1)\xi)d\xi$. Now, as $0<a<\frac{1}{2},$ it follows $|a-1|< 1$ and therefore $|\xi_2+(a-1)\xi|\geq \frac{1}{2}|\xi_2|$.
	Observe still that 
	\begin{eqnarray*}
		|\eta| &&=|\tau_2+(a-1)\xi^2-\xi_2^2+2\xi\xi_2|\\
		&&=|\lambda_2+((a-1)\xi^2-2\xi_2^2+2\xi\xi_2)|\\
		&&\leq |\lambda_2|+|(a-1)\xi^2-2\xi_2^2+2\xi\xi_2|\leq |\tau_2+\xi_2|+4|\xi_2|^2\\
		&& \leq c|\lambda_2|.
	\end{eqnarray*}
	Thus,
	\begin{eqnarray*}
		J_5 && \leq  \dfrac{1}{\langle\lambda_2\rangle^{2b}}\displaystyle\int_{\langle\eta \rangle \leq c\langle\lambda_2\rangle}\dfrac{\langle \xi\rangle^{2s}\langle \xi_2\rangle^{-4\kappa-1}}{\langle \eta\rangle^{2d}}d\eta \\
		&&\leq \dfrac{1}{\langle\lambda_2\rangle^{2b}}\displaystyle\int_{\langle\eta \rangle\leq c\langle\lambda_2\rangle}\dfrac{\langle \xi_2\rangle^{\max\{0,2s\}-4\kappa-1}}{\langle\eta \rangle^{2d}}d\eta, \ \mbox{ because } |\xi|<\frac{1}{2}|\xi_2|\\
		&& \leq \langle \xi_2\rangle^{\max\{0,2s\}-4\kappa-1}\dfrac{\langle\lambda_2\rangle^{2d}}{\langle\lambda_2\rangle^{2b}}\\
		&&\leq  \langle \xi_2\rangle^{\max\{0,2s\}-4\kappa-1}\langle\lambda_2\rangle^{-2b+2d}\leq \langle \xi_2\rangle^{\max\{0,2s\}-4\kappa-1-2b+2d}.
	\end{eqnarray*}
	We prove $J_6$.
	Remember that 
	\begin{eqnarray*}
		J_6 &&= \dfrac{1}{\langle\lambda_1\rangle^{2b}}\displaystyle\int_{\mathbb{R}}\dfrac{\langle \xi\rangle^{2s}\langle \xi_1\rangle^{-2{\kappa} } \langle \xi_2\rangle^{-2{\kappa} }\chi_{\mathcal{B}_{3}}}{\langle\tau_1+a\xi^2+\xi_2^2\rangle^{2d}}d\xi_2\\
		&& \leq \dfrac{1}{\langle\lambda_1\rangle^{2b}}\displaystyle\int_{\mathbb{R}}\dfrac{\langle \xi\rangle^{2s}\langle \xi_1\rangle^{-4\kappa} \chi_{\mathcal{B}_{3}}}{\langle\tau_1+a\xi^2+\xi_2^2\rangle^{2d}}d\xi_2.
	\end{eqnarray*}
	Let $\eta=\tau_1+a\xi^2+\xi_2^2$, such that $d\eta=2\xi_2d\xi_2$. Now, 
	\begin{eqnarray*}
		|\eta| &&=|\tau_1+a\xi^2+\xi_2^2|\\
		&&=|(\lambda_1)+(a\xi^2+\xi_2^2-\xi_1^2)|\\
		&&\leq c |\lambda_1|.
	\end{eqnarray*}
	And using that  $|\xi_1|\sim |\xi_2|$, we obtain
	\begin{eqnarray*}
		J_6 &&\leq \dfrac{1}{\langle\lambda_1\rangle^{2b}}\displaystyle\int_{\langle\eta \rangle\leq c\langle\lambda_1\rangle}\dfrac{\langle \xi\rangle^{2s}\langle \xi_1\rangle^{-4\kappa} }{|\xi_1|\langle\eta \rangle^{2d}}d\xi_2\\
		&&\leq \langle \xi_1\rangle^{\max\{0,2s\}-4\kappa-1}\dfrac{\langle\lambda_1\rangle^{2d}}{\langle\lambda_1\rangle^{2b}}\\
		&& \leq \langle \xi_1\rangle^{\max\{0,2s\}-4\kappa-1-2b+2d}.
	\end{eqnarray*}
	As $1/2<b<3/4$ and $1/4<d<1/2$ we get $-1<-2b+2d<0$ and hence we can take $b$ and $d$ so that $2s-4\kappa-1-2b+2d<0$ if $s<2k+1$.
	
	Then, we completed the proof of Proposition \ref{p2}.
\end{proof}
\begin{remark}
	The lines $s=-{\kappa}-1$ and $s=2{\kappa}+1$ intersect each other at the point where $\kappa=-\frac{2}{3}$.
\end{remark}

\subsection{Bilinear estimates for $\sigma<2$}

For $a>1/2$, we have some results present below.
\begin{prop}\label{p3}
	Assume that $a>1/2$ (equivalently $\sigma <2$), $u\in X^{\kappa,b}$  and $v\in X_a^{s,b}$, then the bilinear estimate below holds if $1/2<b<3/4$, $1/4<d<1/2$ and $|\kappa|-s\leq 1/2$.
	
	\begin{equation}
		\left\|\overline{u}\cdot v \right\|_{X^{\kappa,-d}}\leq C \left\|u \right\|_{X^{\kappa,b}}\cdot \left\|v \right\|_{X^{s,b}}. 
	\end{equation}
\end{prop}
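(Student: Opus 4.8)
\textbf{Proof strategy for Proposition \ref{p3}.}

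The plan is to mimic almost verbatim the scheme used in the proof of Proposition \ref{p1}, since the only structural difference is the size of the dispersion coefficient $a$: here $a>1/2$ rather than $0<a<1/2$. First I would introduce the duality reformulation: setting
\begin{equation*}
f(\xi,\tau)=\langle\tau-\xi^2\rangle^b\langle\xi\rangle^{\kappa}\widehat{\overline{u}}(\xi,\tau),\qquad g(\xi,\tau)=\langle\tau+a\xi^2\rangle^b\langle\xi\rangle^s\widehat{v}(\xi,\tau),
\end{equation*}
so that $\|f\|_{L^2}=\|u\|_{X^{\kappa,b}}$ and $\|g\|_{L^2}=\|v\|_{X^{s,b}_a}$, the estimate reduces to bounding the trilinear form $W(f,g,\varphi)$ by $c\|f\|_{L^2}\|g\|_{L^2}\|\varphi\|_{L^2}$, where the weight is $\langle\xi\rangle^{\kappa}\langle\xi_1\rangle^{-\kappa}\langle\xi_2\rangle^{-s}\langle\omega\rangle^{-d}\langle\omega_1\rangle^{-b}\langle\omega_2\rangle^{-b}$ with the same resonance variables $\omega=\tau+\xi^2$, $\omega_1=\tau_1-\xi_1^2$, $\omega_2=\tau_2+a\xi_2^2$ as in \eqref{3.2}. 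As before I would split $\mathbb{R}^4$ into three regions according to which of $|\omega|,|\omega_1|,|\omega_2|$ is the largest (together with the low-frequency region $|\xi_2|\le 1$, which is harmless by \eqref{conta2}), apply Cauchy--Schwarz and Fubini, and reduce matters to the boundedness of the three one-dimensional integrals $J_1,J_2,J_3$ of the form \eqref{j1}--\eqref{j3}, using $\langle\xi\rangle^{2\kappa}\langle\xi_1\rangle^{-2\kappa}\le\langle\xi_2\rangle^{2|\kappa|}$.

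The one genuine change is the dispersion (resonance) identity. With $a>1/2$ I would compute
\begin{equation*}
|\omega-\omega_1-\omega_2|=|\xi^2+\xi_1^2-a\xi_2^2|,
\end{equation*}
and now, because $a$ can be comparable to or larger than $1$, one cannot extract a factor of $\xi_2^2$ directly from $\xi^2+\xi_1^2$ alone. Instead I would write $\xi_2=\xi-\xi_1$ and expand $\xi^2+\xi_1^2-a\xi_2^2=(1-a)\xi^2+(1-a)\xi_1^2+2a\xi\xi_1$ (or the symmetric variant), and argue that for $a>1/2$ the largest of $|\omega|,|\omega_1|,|\omega_2|$ controls $\max\{\xi^2,\xi_1^2,\xi_2^2\}$ up to a constant depending on $a$ --- the potential cancellation when $a\approx 1$ is the reason one only recovers a gain of one power of $\langle\xi_2\rangle$ per unit of $d$ or $b$, which is exactly what forces the stronger hypothesis $|\kappa|-s\le 1/2$ here (versus $|\kappa|-s\le 1$ when $\sigma>2$). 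After establishing $3\max\{|\omega|,|\omega_1|,|\omega_2|\}\gtrsim_a \langle\xi_2\rangle^2$ (say) or, in the worst sub-case, $\gtrsim_a\langle\xi_2\rangle$, I would feed this into $J_1,J_2,J_3$: in $J_1$ one trades $\langle\omega\rangle^{-2d}$ for $\langle\xi_2\rangle^{-4d}$ (or $\langle\xi_2\rangle^{-2d}$), in $J_2,J_3$ one trades $\langle\omega_1\rangle^{-2b}$ resp.\ $\langle\omega_2\rangle^{-2b}$ for the corresponding negative power of $\langle\xi_2\rangle$, and then applies \eqref{conta2} to the remaining $\tau$- or $\xi$-integral (which is always of the quadratic type $\langle\alpha_0+\alpha_1 x+x^2\rangle^{-2b}$ or $\langle\cdots\rangle^{-2d}$ with $2b,2d>1/2$). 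The resulting integrals converge provided the surviving power of $\langle\xi_2\rangle$ is negative, i.e.\ $-2s+2|\kappa|+2d-1<0$-type conditions, which reduce to $|\kappa|-s\le 1/2$ together with $1/2<b<3/4$ and $1/4<d<1/2$.

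The main obstacle I anticipate is precisely the resonance analysis when $a$ is near $1$: here the bilinear interaction between the Schr\"odinger group $e^{it\partial_x^2}$ (symbol $\xi^2$, appearing as $\xi_1^2$ after conjugation) and $e^{iat\partial_x^2}$ (symbol $a\xi_2^2$) is nearly non-dispersive, so the algebraic gain degenerates and one must be careful to choose the right splitting of $\xi_2^2$ (respectively the right sub-regions in $\xi_1,\xi_2$) to avoid a parabolic resonance set of positive measure. This is also where the threshold $1/2$ in $|\kappa|-s\le 1/2$ comes from, and it must be checked that the constant $C$ blows up as $a\to 1^+$ (which is acceptable for fixed $\sigma\ne 2$). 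Aside from this point, every step is a routine adaptation of the $\sigma>2$ argument, and the low-frequency piece $|\xi_2|\le 1$, the Cauchy--Schwarz reduction, and the applications of Lemma \ref{l2.2} go through unchanged.
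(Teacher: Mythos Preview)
Your reduction to the trilinear form and the functionals $J_1,J_2,J_3$ is fine, but the key step you propose --- ``establishing $3\max\{|\omega|,|\omega_1|,|\omega_2|\}\gtrsim_a\langle\xi_2\rangle^2$ (say) or, in the worst sub-case, $\gtrsim_a\langle\xi_2\rangle$'' and then trading $\langle\omega\rangle^{-2d}$ or $\langle\omega_j\rangle^{-2b}$ for negative powers of $\langle\xi_2\rangle$ --- does not work when $a>1/2$. Indeed the resonance function factors as
\[
|\omega-\omega_1-\omega_2|=|\xi^2+\xi_1^2-a\xi_2^2|=2\bigl|\xi-\mu_a\xi_2\bigr|\cdot\bigl|\xi-(1-\mu_a)\xi_2\bigr|,\qquad \mu_a=\tfrac{1-\sqrt{2a-1}}{2},
\]
and this vanishes on the two lines $\xi=\mu_a\xi_2$ and $\xi=(1-\mu_a)\xi_2$. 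So there is no uniform lower bound of the type you need, and the mechanism of Proposition~\ref{p1} (where one had the pointwise bound $|\omega-\omega_1-\omega_2|\ge(1-2a)(\xi^2+\xi_1^2)$) genuinely breaks down.

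What the paper does instead is a geometric splitting that stays away from one of the two resonance lines at a time: set $\mathcal{A}_2=\{|\xi_2|\ge 1,\ |(1-a)\xi_2-\xi|>\tfrac{2a-1}{4}|\xi_2|\}$ and $\mathcal{A}_3=\{|\xi_2|\ge 1,\ |\xi-\tfrac12\xi_2|>\tfrac{2a-1}{4}|\xi_2|\}$, which together with $\{|\xi_2|\le1\}$ cover everything. On these regions one does \emph{not} trade $\langle\omega\rangle^{-2d}$ for $\langle\xi_2\rangle^{-\text{power}}$; rather one performs the change of variable $\eta=\tau-(a-1)\xi_2^2-2\xi\xi_2+\xi^2$ in $J_1$ (and its analogues in $J_2,J_3$), whose Jacobian is $2|(1-a)\xi_2-\xi|\gtrsim|\xi_2|$ or $2|\xi-\tfrac12\xi_2|\gtrsim|\xi_2|$ on the respective regions. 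This yields exactly one factor $\langle\xi_2\rangle^{-1}$ and then $\int\langle\eta\rangle^{-2b}d\eta<\infty$; the surviving power is $\langle\xi_2\rangle^{-2s+2|\kappa|-1}$, which is bounded precisely when $|\kappa|-s\le 1/2$. The max-of-$|\omega|,|\omega_1|,|\omega_2|$ split is only invoked on $\mathcal{A}_3$ (to bound the range of $\eta$ in $J_2,J_3$), not to produce a lower bound on the resonance function. Your outline misses this change-of-variable/Jacobian mechanism, which is the heart of the argument in this regime.
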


The second estimate tells us that 
\begin{prop}\label{p4}
	Let $a>1/2$ (equivalently $\sigma <2$), $u, \tilde{u}\in X^{\kappa,b}$  with $1/2<b<3/4$ and   $1/4<d<1/2$. The estimate 
	
	\begin{equation}
		\left\|u\cdot \tilde{u} \right\|_{X^{s,-d}}\leq C \left\|u \right\|_{X^{\kappa,b}}\cdot \left\|\tilde{u} \right\|_{X^{\kappa,b}}
	\end{equation}
	holds for $s\leq \min\left\lbrace \kappa +1/2,\  \ 2\kappa +1/2\right\rbrace $.
\end{prop}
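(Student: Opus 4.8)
The plan is to follow the same duality/Cauchy--Schwarz scheme used for Proposition \ref{p2}, but now with the phase function $\phi_a(\xi)=a\xi^2$ in the \emph{output} norm (so $\lambda=\tau+a\xi^2$) and with the parameter restriction reversed to $a>1/2$. After the standard reductions, the estimate $\|u\tilde u\|_{X^{s,-d}_a}\lesssim \|u\|_{X^{\kappa,b}}\|\tilde u\|_{X^{\kappa,b}}$ is equivalent to bounding three functionals $J_4,J_5,J_6$ of the same shape as \eqref{j4}--\eqref{j6}, where one takes the $L^\infty$ in the variable paired with whichever of $\langle\lambda\rangle,\langle\lambda_1\rangle,\langle\lambda_2\rangle$ is largest, and integrates out the remaining frequency using Lemma \ref{l2.2}. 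So the skeleton of the argument is already fixed; what changes is the resonance analysis and hence the admissible region for $s$ versus $\kappa$.

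First I would record the dispersive identity in this regime. With $\lambda=\tau+a\xi^2$, $\lambda_1=\tau_1+\xi_1^2$ (an $\overline u$-type modulation, coming from $u$), wait --- here both factors are $u\in X^{\kappa,b}$ with phase $\phi_1$, so $\lambda_1=\tau_1+\xi_1^2$, $\lambda_2=\tau_2+\xi_2^2$, and
\begin{equation*}
\lambda-\lambda_1-\lambda_2=a\xi^2-\xi_1^2-\xi_2^2.
\end{equation*}
When $a>1/2$ one cannot in general bound this below by $\xi_1^2+\xi_2^2$ as in Proposition \ref{p2}; instead, using $\xi=\xi_1+\xi_2$, write $a\xi^2-\xi_1^2-\xi_2^2=(a-1)(\xi_1^2+\xi_2^2)+2a\xi_1\xi_2$, and split into the region $|\xi_1|\sim|\xi_2|$ and the region where one frequency dominates. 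In the dominant-frequency region, say $|\xi_1|\gg|\xi_2|$, one gets $|\lambda-\lambda_1-\lambda_2|\gtrsim |\xi_1|^2\sim\xi^2$, so the largest modulation controls one power of the large frequency --- exactly one power, which is why the gain here is only $1/2$ derivative rather than the full derivative of the $\sigma>2$ case, and this is what forces $s\le\kappa+1/2$. In the balanced region $|\xi_1|\sim|\xi_2|$ one has $|\xi|$ possibly small; there, after the change of variables that linearizes the quadratic phase (as in the $\mathcal B_i$ analysis of Proposition \ref{p2}), the Jacobian is $\sim|\xi_1|$ and one recovers a $\langle\xi_1\rangle^{-1}$ factor, while $\langle\xi\rangle^{2s}$ must be absorbed by $\langle\xi_1\rangle^{-4\kappa}$ when $\kappa<0$ (using $|\xi|\lesssim|\xi_1|$), leading to the constraint $s\le 2\kappa+1/2$. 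Combining, the admissible region is $s\le\min\{\kappa+1/2,\,2\kappa+1/2\}$, matching the statement, and for $\kappa\ge0$ only the first constraint is active.

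Concretely the steps are: (i) set $f,g,\varphi$ as in the proof of Proposition \ref{p1}, dualize, and reduce to $|W|\lesssim\|f\|_{L^2}\|g\|_{L^2}\|\varphi\|_{L^2}$; (ii) decompose $\mathbb R^4=\mathcal S_1\cup\mathcal S_2\cup\mathcal S_3$ according to which of $|\lambda|,|\lambda_1|,|\lambda_2|$ realizes the max, apply Cauchy--Schwarz to peel off $\|f\|\|g\|\|\varphi\|$, and reduce to the $L^\infty$ bound on $J_4,J_5,J_6$; (iii) use $3\max\{|\lambda|,|\lambda_1|,|\lambda_2|\}\gtrsim|a\xi^2-\xi_1^2-\xi_2^2|$ to trade the large modulation for frequency powers, carefully handling the balanced vs. unbalanced cases as above; (iv) in each region perform the linearizing substitution and invoke \eqref{conta1}/\eqref{conta2} of Lemma \ref{l2.2}, checking that $b>1/2$ controls the $\langle\eta\rangle^{-2b}$ integral and $d>1/4$ controls the $\langle\eta\rangle^{-2d}$ one, and that the leftover frequency exponent is negative precisely under $s\le\min\{\kappa+1/2,2\kappa+1/2\}$; the condition $b<3/4$ (equivalently $2d>1/2$) is used where a $\langle\lambda\rangle^{2d}/\langle\lambda\rangle^{2b}$ ratio appears.

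The main obstacle is step (iii) in the balanced regime $|\xi_1|\sim|\xi_2|$ with $|\xi|$ small and $\kappa<0$: there the resonance function $a\xi^2-\xi_1^2-\xi_2^2$ nearly vanishes, so the modulation gain degenerates, and one must instead extract smoothing from the Jacobian of the change of variables (the $2|\xi_2-\xi|\gtrsim|\xi_1|$ factor, as in the $J_4$ estimate over $\mathcal B_1$ in Proposition \ref{p2}) while simultaneously balancing $\langle\xi\rangle^{2s}$ against $\langle\xi_1\rangle^{-4\kappa}$ via $|\xi|\lesssim|\xi_1|$. This is the place where the loss relative to Proposition \ref{p2} arises and where the sharp constraint $s\le2\kappa+1/2$ is pinned down; the corresponding remark that the two boundary lines $s=\kappa+1/2$ and $s=2\kappa+1/2$ meet at $\kappa=0$ then follows by inspection.
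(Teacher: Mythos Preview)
Your resonance analysis is the weak point, and it breaks the argument. With $\xi=\xi_1+\xi_2$ and $a>1/2$, the resonance function factors as
\[
a\xi^2-\xi_1^2-\xi_2^2 \;=\; -2\bigl(\xi_2-\mu_a\xi\bigr)\bigl(\xi_2-(1-\mu_a)\xi\bigr),\qquad \mu_a=\tfrac{1-\sqrt{2a-1}}{2},
\]
so it vanishes along the two lines $\xi_2=\mu_a\xi$ and $\xi_2=(1-\mu_a)\xi$. These lines are \emph{not} in general the balanced set $|\xi_1|\sim|\xi_2|$: for $a=1$ they are $\xi_2=0$ and $\xi_2=\xi$, i.e.\ precisely your ``dominant'' region. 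Hence your claim that $|\lambda-\lambda_1-\lambda_2|\gtrsim|\xi_1|^2$ when $|\xi_1|\gg|\xi_2|$ is false (for $a=1$ one gets $2\xi_1\xi_2$, which is $\ll|\xi_1|^2$), and the dichotomy dominant/balanced does not isolate the bad set. There is also an internal inconsistency: had that lower bound been true, it would be the same modulation control as in Proposition~\ref{p2} (the $\sigma>2$ case) and would yield a full derivative, not the half derivative you announce.

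What the paper actually does is decompose according to which factor $|\xi_2-\tfrac12\xi|$ or $|(1-a)\xi-\xi_2|$ exceeds $\tfrac{2a-1}{4}|\xi|$ (at least one always does), together with the region $|\xi|\le1$; this gives sets $\mathcal B_1,\mathcal B_2,\mathcal B_3$ covering $\mathbb R^4$, and $\mathcal B_3$ is further split by which of $|\lambda|,|\lambda_1|,|\lambda_2|$ is largest. The half-derivative gain does \emph{not} come from a modulation lower bound but from the Jacobian of the substitution linearizing the quadratic phase: on each region the relevant Jacobian factor is $\gtrsim|\xi|$ (because one of the two linear factors above is $\gtrsim|\xi|$), which produces exactly one power $\langle\xi\rangle^{-1}$ and hence the constraint $s\le\kappa+1/2$ when $\kappa\ge0$. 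The $\kappa<0$ analysis then proceeds exactly as in Proposition~\ref{p2}, with the additional $|\xi|\lesssim|\xi_1|\sim|\xi_2|$ case handled by the same Jacobian trick, giving $s\le2\kappa+1/2$. So you need to replace the dominant/balanced split by the decomposition relative to the two resonance lines; after that, your steps (i), (ii), (iv) go through.
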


\begin{proof}[Proof of Proposition \ref{p3}:]
	
	We start by considering the dispersion relation.
	
	Note that 
	\begin{eqnarray*}
		|\omega-\omega_1-\omega_2|&=& |\xi^2+\xi_1^2-a\xi_2^2|\\
		&\geq & |2\xi^2-2\xi\xi_2+(1-a)\xi_2^2|, \ \ \mbox{ using }\  a> \frac{1}{2} \ \mbox{we have }\\
		&=& 2|\xi-\mu_a \xi_2|\cdot |\xi-(1-\mu_a)\xi_2|, \mbox{ where }\ \mu_a=\frac{1-\sqrt{2a-1}}{2}.
	\end{eqnarray*}
	Note that the above dispersion relation  has two regions: the lines $\xi=\mu_a\xi_2$ and $\xi=(1-\mu_a)\xi_2$ making it difficult to use the relationship. Observe that if $a=\frac{1}{2}$ then $\mu_a=1-\mu_a=\frac{1}{2}$ and if $a=1$ then $\mu_a=0$ (the case $a=\frac{1}{2}$ 
	will be treated separately, while
	the case $a=1$ does not require much attention despite being the case without modification) \cite{pava-2007}.
	
	Before doing it, consider
	\begin{eqnarray*}
		\mathcal{A}_1&=& \{ |\xi_2| \leq 1\}\subset \mathbb{R}^4,  \\
		\mathcal{A}_2&=& \left\{ |\xi_2|\geq 1, \left|(1-a)\xi_2-\xi\right|>\frac{2a-1}{4}|\xi_2|\right\}\subset \mathbb{R}^4,  \\
		\mathcal{A}_3&=& \left\{ |\xi_2|\geq 1, \left|\xi-\frac{1}{2}\xi_2\right|>\frac{2a-1}{4}|\xi_2|\right\}\subset \mathbb{R}^4. 
	\end{eqnarray*}
	Note that if $\left|\xi-\frac{1}{2}\xi_2\right|\leq\frac{2a-1}{4}|\xi_2|$, $ \left|\xi-\frac{1}{2}\xi_2\right|\leq\frac{2a-1}{4}|\xi_2|$ and  $|\xi_2|\geq 1$ then
	\begin{eqnarray*}
		\left(a-\frac{1}{2}\right)|\xi_2|&=&\left| \left(\xi-\frac{1}{2}\xi_2\right)+\left((1-a)\xi_2-\xi\right)\right|\\
		&\leq & \frac{2a-1}{4}|\xi_2|+\frac{2a-1}{4}|\xi_2|=\frac{1}{2}\left(a-\frac{1}{2}\right)|\xi_2|.
	\end{eqnarray*}
	This contradiction implies  $\mathbb{R}^4= \mathcal{A}_1\cup \mathcal{A}_2\cup \mathcal{A}_3$.\\
	Now consider,
	\begin{eqnarray*}
		\mathcal{A}_{3,1}&=& \mathcal{A}_3 \cap \left\{ |\omega|\geq \max \{|\omega_1|, |\omega_2|\} \right\}, \\
		\mathcal{A}_{3,2}&=& \mathcal{A}_3 \cap \left\{ |\omega_2|\geq \max \{|\omega_1|, |\omega|\}\right\},\\
		\mathcal{A}_{3,3}&=& \mathcal{A}_3\cap \left\{ |\omega_1|\geq \max \{|\omega|, |\omega_2|\}\right\}.
	\end{eqnarray*}
	Remember that $|2\xi^2+\xi_2^2-2\xi\xi_2|\leq 3 \max \{|\omega|,|\omega_1|,|\omega_2|\}.$
	
	Now, we define the regions $\mathcal{R}_i$ (analogously to the proof of proposition (\ref{p1})). 
	Let $\mathcal{R}_1=\mathcal{A}_1\cup\mathcal{A}_2\cup \mathcal{A}_{3,1}$, $\mathcal{R}_2=\mathcal{A}_{3,2}$ and $\mathcal{R}_3=\mathcal{A}_{3,3}.$
	
	We will show that $J_1$ is bounded. Indeed, if $|\xi_2|\leq 1$ then $J_1$ is equivalent to $$\dfrac{1}{\langle\omega\rangle^{2d}}\displaystyle\int_{|\xi_2|\leq 1}\dfrac{1}{\langle\tau-(a-1)\xi_2^2-2\xi\xi_2+\xi^2\rangle^{2b}}d\xi_2\leq c.$$ If $|\xi_2|\geq 1$ then  
	$$J_1 \leq \dfrac{1}{\langle\omega\rangle^{2d}}\displaystyle\int_{|\xi_2|\geq 1}\dfrac{\langle\xi_2\rangle^{-2s+2|{\kappa}|}\chi_{\mathcal{A}_2}}{\langle\tau-(a-1)\xi_2^2-2\xi\xi_2+\xi^2\rangle^{2b}}d\xi_2.$$ 
	Changing the variable $\eta=\tau-(a-1)\xi_2^2-2\xi\xi_2+\xi^2$, we get $$d\eta=-2\left((1-a)\xi_2-\xi\right)d\xi_2$$ and, using the fact that $|{\kappa}|-s\leq 1/2$, obtain the following 
	\begin{eqnarray*}
		\dfrac{1}{\langle\omega\rangle^{2d}}\displaystyle\int_{|\xi_2|\geq 1}\dfrac{\langle\xi_2\rangle^{-2s+2|{\kappa}|}\chi_{\mathcal{A}_2}}{\langle\tau-(a-1)\xi_2^2-2\xi\xi_2+\xi^2\rangle^{2b}}d\xi_2 &\leq & c\dfrac{1}{\langle\omega\rangle^{2d}}\displaystyle\int_{|\xi_2|\geq 1}\dfrac{\langle\xi_2\rangle^{-2s+2|{\kappa}|-1}\chi_{\mathcal{A}_2}}{\langle\eta \rangle^{2b}}d\eta\\
		&\leq & c\dfrac{1}{\langle\omega\rangle^{2d}}\displaystyle\int_{\mathbb{R}}\dfrac{1}{\langle\eta \rangle^{2b}}d\eta\leq c.
	\end{eqnarray*}
	Now, note that in $\mathcal{A}_{3,1}$ we have:
	$$
	\left|(1-a)\xi_2-\xi\right|=\left|\frac{1}{2}\xi_2-\xi +\left(\frac{1}{2}-a\right)\xi_2\right|\geq \left(a-\frac{1}{2}\right)\left|\xi_2\right|-\frac{2a-1}{4}\left|\xi_2\right|\geq c |\xi_2|.
	$$
	To complete the estimate of $J_1$ we change variable to get:
	\begin{eqnarray*}
		\dfrac{1}{\langle\omega\rangle^{2d}}\displaystyle\int_{|\xi_2|\geq 1}\dfrac{\langle\xi_2\rangle^{-2s+2|{\kappa}|}\chi_{\mathcal{A}_{3,1}}}{\langle\tau-(a-1)\xi_2^2-2\xi\xi_2+\xi^2\rangle^{2b}}d\xi_2 &\leq & c\dfrac{1}{\langle\omega\rangle^{2d}}\displaystyle\int_{|\xi_2|\geq 1}\dfrac{\langle\xi_2\rangle^{-2s+2|{\kappa}|-1}\chi_{\mathcal{A}_{3,1}}}{\langle\eta \rangle^{2b}}d\eta\\
		&\leq & c\dfrac{1}{\langle\omega\rangle^{2d}}\displaystyle\int_{\mathbb{R}}\dfrac{1}{\langle\eta \rangle^{2b}}d\eta\leq c.
	\end{eqnarray*}
	To prove that $J_2$ is bounded just observe that 
	\begin{eqnarray*}
		\dfrac{1}{\langle\tau+a\xi^2\rangle^{2b}}\displaystyle\int_{\mathbb{R}}\dfrac{\langle\xi_2\rangle^{-2s+2|{\kappa}|}\chi_{\mathcal{R}_2}}{\langle\tau_2+2\xi^2+\xi_2^2-2\xi\xi_2\rangle^{2d}}d\xi &= &  \dfrac{1}{\langle\omega_2 \rangle^{2b}}\displaystyle\int_{\mathbb{R}}\dfrac{\langle\xi_2\rangle^{-2s+2|{\kappa}|}\chi_{\mathcal{A}_{3,2}}}{\langle\tau_2+2\xi^2+\xi_2^2-2\xi\xi_2\rangle^{2d}}d\xi\\
		&\leq & \dfrac{1}{\langle\omega_2 \rangle^{2b}}\displaystyle\int_{\langle\eta\rangle\leq 4 \langle\omega_2\rangle}\dfrac{\langle\xi_2\rangle^{-2s+2|{\kappa}|-1}}{\langle\eta\rangle^{2d}}d\eta\\
		&\leq & \dfrac{1}{\langle\omega_2 \rangle^{2b-2d}}.
	\end{eqnarray*}
	In the first inequality above, we made the change of variable $\eta=\tau_2+2\xi^2+\xi_2^2-2\xi\xi_2$ and used the fact that 
	$$
	|\eta|=|\omega_2+ (\omega-\omega_1-\omega_2)|\leq 4 |\omega_2|.
	$$
	We estimate $J_3$. 
	Analogously to the last estimate we get 
	\begin{eqnarray*}
		\dfrac{1}{\langle\tau-\xi_1^2\rangle^{2b}}\displaystyle\int_{\mathbb{R}}\dfrac{\langle\xi_2\rangle^{-2s+2|{\kappa}|}\chi_{\mathcal{R}_3}}{\langle\tau_1-a\xi_2^2+\xi^2\rangle^{2d}}d\xi_2 &= &\dfrac{1}{\langle\omega_1\rangle^{2b}}\displaystyle\int_{|\xi_2|>1}\dfrac{\langle\xi_2\rangle^{-2s+2|{\kappa}|}\chi_{\mathcal{A}_{3,3}}}{\langle\tau_1-a\xi_2^2+\xi^2\rangle^{2d}}d\xi_2 \\
		&\leq & \dfrac{1}{\langle\omega_1\rangle^{2b}}\displaystyle\int_{\langle\eta\rangle\leq 4 \langle\omega_1\rangle}\dfrac{\langle\xi_2\rangle^{-2s+2|{\kappa}|-1}}{\langle\eta\rangle^{2d}}d\eta\\
		&\leq & \dfrac{1}{\langle\omega_1 \rangle^{2b-2d}}.
	\end{eqnarray*}
	Note that we used the fact that
	$\tau_1-a\xi_2^2+\xi^2=\omega_1+(\omega -\omega_1-\omega_2)$.
	
	This finishes the proof of the first inequality.
\end{proof}
\begin{proof}[Proof of Proposition \ref{p4}]
Initially, we have that:
	\begin{eqnarray*}
		|\lambda-\lambda_1-\lambda_2|&=& |a\xi^2-\xi_1^2-\xi_2^2|\\
		&\geq & |2\xi_2^2-2\xi\xi_2+(1-a)\xi^2| \ \ \mbox{ using }\  a> \frac{1}{2} \ \mbox{ we have }\\
		&=& 2|\xi_2-\mu_a \xi|\cdot |\xi_2-(1-\mu_a)\xi|, \mbox{ where }\ \mu_a=\frac{1-\sqrt{2a-1}}{2}.
	\end{eqnarray*}
	The dispersion relation above is zero in two straight lines.
	
	Now, we define, 
	\begin{eqnarray*}
		\mathcal{B}_1&=& \{ |\xi| \leq 1\}\subset \mathbb{R}^4,  \\
		\mathcal{B}_2&=& \left\{ |\xi|\geq 1, \left|\xi_2-\frac{1}{2}\xi\right|>\frac{2a-1}{4}|\xi|\right\}\subset \mathbb{R}^4, \\
		\mathcal{B}_3&=& \left\{ |\xi|\geq 1, \left|(1-a)\xi-\xi_2\right|>\frac{2a-1}{4}|\xi|\right\}\subset \mathbb{R}^4.  
	\end{eqnarray*}
	Note that if $\left|\xi_2-\frac{1}{2}\xi\right|\leq\frac{2a-1}{4}|\xi|$ and $ \left|\xi_2-\frac{1}{2}\xi\right|\leq\frac{2a-1}{4}|\xi|$ and still $|\xi|> 1$ then
	\begin{eqnarray*}
		\left(a-\frac{1}{2}\right)|\xi|&=&\left| \left(\xi_2-\frac{1}{2}\xi\right)+\left((1-a)\xi-\xi_2\right)\right|\\
		&\leq & \frac{2a-1}{4}|\xi|+\frac{2a-1}{4}|\xi|=\frac{1}{2}\left(a-\frac{1}{2}\right)|\xi|.
	\end{eqnarray*}
	Again, this contradiction implies $\mathbb{R}^4= \mathcal{B}_1\cup \mathcal{B}_2\cup \mathcal{B}_3.$
	
	Now, consider
	\begin{eqnarray*}
		\mathcal{B}_{3,1}&=& \mathcal{B}_3 \cap \left\{ |\lambda|\geq \max \{|\lambda_1|, |\lambda_2|\} \right\}, \\
		\mathcal{B}_{3,2}&=& \mathcal{B}_3 \cap \left\{ |\lambda_2|\geq \max \{|\lambda_1|, |\lambda|\}\right\},\\
		\mathcal{B}_{3,3}&=& \mathcal{B}_3\cap \left\{ |\lambda_1|\geq \max \{|\lambda|, |\lambda_2|\}\right\}.
	\end{eqnarray*}
	We define the regions $\mathcal{S}_i$ (analogously to the proof of proposition (\ref{p2})), 
	setting $\mathcal{S}_1=\mathcal{B}_1\cup\mathcal{B}_2\cup \mathcal{B}_{3,1}$, $\mathcal{S}_2=\mathcal{B}_{3,2}$ and $\mathcal{S}_3=\mathcal{B}_{3,3}.$
	
	For $\kappa\geq 0$, we have
	$\langle \xi_1 \rangle^{-2{\kappa} } \langle \xi_2 \rangle^{-2{\kappa} }\leq \langle \xi \rangle^{-2{\kappa} }$:
	
	\begin{equation}\label{j41}
		J_4 \leq \dfrac{1}{\langle\lambda\rangle^{2d}}\displaystyle\int_{\mathbb{R}}\dfrac{\langle \xi\rangle^{2s-2{\kappa} } \chi_{\mathcal{S}_1}}{\langle\tau+\xi_2^2-2\xi\xi_2+\xi^2\rangle^{2b}}d\xi_2,
	\end{equation}
	
	\begin{equation}\label{j51}
		J_5 \leq \dfrac{1}{\langle\lambda_2\rangle^{2b}}\displaystyle\int_{\mathbb{R}}\dfrac{\langle \xi\rangle^{2s-2{\kappa} }\chi_{\mathcal{S}_2}}{\langle\tau_2+(a-1)\xi^2-\xi_2^2+2\xi\xi_2\rangle^{2d}}d\xi, 
	\end{equation}
	
	\begin{equation}\label{j61}
		J_6 \leq \dfrac{1}{\langle\lambda_1\rangle^{2b}}\displaystyle\int_{\mathbb{R}}\dfrac{\langle \xi\rangle^{2s-2{\kappa} }\chi_{\mathcal{S}_3}}{\langle\tau_1+a\xi^2+\xi_2^2\rangle^{2d}}d\xi_2. 
	\end{equation}
	To complete the proof that $J_4$ is bounded it is suffices to show that  (\ref{j41}) satisfies: 
	\begin{equation*}
		\dfrac{1}{\langle\lambda\rangle^{2d}}\displaystyle\int_{\mathbb{R}}\dfrac{\langle \xi\rangle^{2s-2{\kappa} } \chi_{\mathcal{B}_1}}{\langle\tau+\xi_2^2-2\xi\xi_2+\xi^2\rangle^{2b}}d\xi_2\leq \dfrac{1}{\langle\lambda\rangle^{2d}}\displaystyle\int_{\mathbb{R}}\dfrac{1}{\langle\tau+\xi_2^2-2\xi\xi_2+\xi^2\rangle^{2b}}d\xi_2\leq c;
	\end{equation*}
	
	\begin{equation*} 
		\dfrac{1}{\langle\lambda\rangle^{2d}}\displaystyle\int_{\mathbb{R}}\dfrac{\langle \xi\rangle^{2s-2{\kappa} } \chi_{\mathcal{B}_2}}{\langle\tau+\xi_2^2-2\xi\xi_2+\xi^2\rangle^{2b}}d\xi_2\leq \dfrac{1}{\langle\lambda\rangle^{2d}}\displaystyle\int_{\mathbb{R}}\dfrac{\langle \xi\rangle^{2s-2{\kappa}-1}}{\langle\eta\rangle^{2b}}d\xi_2\leq c \mbox{ and }
	\end{equation*}
	
	\begin{equation*}
		\dfrac{1}{\langle\lambda\rangle^{2d}}\displaystyle\int_{\mathbb{R}}\dfrac{\langle \xi\rangle^{2s-2{\kappa} } \chi_{\mathcal{B}_{3,1}}}{\langle\tau+\xi_2^2-2\xi\xi_2+\xi^2\rangle^{2b}}d\xi_2\leq \dfrac{1}{\langle\lambda\rangle^{2d}}\displaystyle\int_{\mathbb{R}}\dfrac{\langle \xi\rangle^{2s-2{\kappa}-1}}{\langle\eta\rangle^{2b}}d\xi_2\leq c.
	\end{equation*}
	In the estimates above, we used the fact $b>1/2$ and also the fact that 
	\begin{eqnarray*}
		\left|\xi_2-\frac{1}{2}\xi\right| &= & \left|(1-a)\xi-\xi_2+ \left(a-\frac{1}{2}\right)\xi\right|\\
		&\geq & \left(a-\frac{1}{2}\right)|\xi| -\left|(1-a)\xi-\xi_2\right|\\
		& \geq & \left(a-\frac{1}{2}\right)|\xi|-\frac{1}{2}\left(a-\frac{1}{2}\right)|\xi|= \frac{2a-1}{4}|\xi|.
	\end{eqnarray*}
	Let us estimate (\ref{j51}), using the fact that $$\eta=\tau_2+(a-1)\xi^2-\xi_2^2+2\xi\xi_2=\lambda_2+(\lambda-\lambda_1-\lambda_2),$$ which give us $d\eta=2((1-a)\xi-\xi_2)d\xi$, so
	\begin{eqnarray*}
		\dfrac{1}{\langle\lambda_2\rangle^{2b}}\displaystyle\int_{\mathbb{R}}\dfrac{\langle \xi\rangle^{2s-2{\kappa} }\chi_{\mathcal{B}_{3,2}}}{\langle\tau_2+(a-1)\xi^2-\xi_2^2+2\xi\xi_2\rangle^{2d}}d\xi & \leq & \dfrac{1}{\langle\lambda_2\rangle^{2b}}\displaystyle\int_{\langle\eta\rangle\leq 4\langle_2\lambda\rangle}\dfrac{\langle \xi\rangle^{2s-2{\kappa}-1}}{\langle\eta\rangle^{2d}}d\eta \\
		& \leq & \dfrac{1}{\langle\lambda_2\rangle^{2b-2d}}\leq c.
	\end{eqnarray*}
	Now let us estimate (\ref{j61}). This is completely analogous to the previous estimate.
	\begin{eqnarray*}
		\dfrac{1}{\langle\lambda_1\rangle^{2b}}\displaystyle\int_{\mathbb{R}}\dfrac{\langle \xi\rangle^{2s-2{\kappa} }\chi_{\mathcal{S}_3}}{\langle\tau_1+a\xi^2+\xi_2^2\rangle^{2d}}d\xi_2 & \leq & \dfrac{1}{\langle\lambda_1\rangle^{2b}}\displaystyle\int_{\langle\eta\rangle\leq 4\langle\lambda_1\rangle}\dfrac{\langle \xi\rangle^{2s-2{\kappa}-1}}{\langle\eta\rangle^{2d}}d\eta \\
		& \leq & \dfrac{1}{\langle\lambda_1\rangle^{2b-2d}}\leq c.
	\end{eqnarray*}
	This concludes the case $\kappa \geq 0$.
	
	\vspace{0.5cm}
	The case $\kappa<0$ will be separated into sub-cases: 
	
	\begin{enumerate}
		\item Supposing $|\xi_1|\leq \frac{2}{3}|\xi_2|$, then, $\langle\xi_1\rangle^{-2{\kappa} }\langle\xi_2\rangle^{-2{\kappa} }\leq \langle \xi_2\rangle^{-4\kappa}$. Moreover,$|\xi_2|\leq |\xi_1|+|\xi|\leq \frac{2|\xi_2|}{3}+|\xi|$, hence $|\xi_2|\leq 3|\xi|$. Therefore, $$\langle \xi\rangle^{2s}\langle \xi_1\rangle^{-2{\kappa} } \langle \xi_2\rangle^{-2{\kappa} }\leq \langle \xi\rangle^{2s-4\kappa}.$$
		
		\item Supposing  $|\xi_2|\leq \frac{2}{3}|\xi_1|$ we have the same result, that is,  $$\langle \xi\rangle^{2s}\langle \xi_1\rangle^{-2{\kappa} } \langle \xi_2\rangle^{-2{\kappa} }\leq \langle \xi\rangle^{2s-4\kappa}.$$
		
		\item For the case, $\frac{2}{3}|\xi_2|<|\xi_1|<\frac{3}{2}|\xi_2|$, we need to do the following:
		\begin{enumerate}
			\item If $\xi_1,\ \xi_2\geq 0$ then $ \frac{2}{3}\xi_2<\xi_1<\frac{3}{2}\xi_2\Longrightarrow \frac{5}{3}\xi_2<\xi <\frac{5}{2}\xi_2$. Hence, $$\langle \xi\rangle^{2s}\langle \xi_1\rangle^{-2{\kappa} } \langle \xi_2\rangle^{-2{\kappa} }\leq \langle \xi\rangle^{2s-4\kappa}.$$
			\item If $\xi_1,\ \xi_2\leq 0$ then $ \frac{-2}{3}\xi_2<-\xi_1<\frac{-3}{2}\xi_2\Longrightarrow \frac{-5}{3}\xi_2<-\xi <\frac{-5}{2}\xi_2$, so $|\xi_2|<\frac{3}{5}|\xi|$. Hence, $$\langle \xi\rangle^{2s}\langle \xi_1\rangle^{-2{\kappa} } \langle \xi_2\rangle^{-2{\kappa} }\leq \langle \xi\rangle^{2s-4\kappa}.$$
			\item If $\xi_1>0$ and $\xi_2<0$ then $\frac{-2}{3}\xi_2<\xi_1<\frac{-3}{2}\xi_2\Longrightarrow \frac{1}{3}\xi_2<\xi <\frac{-1}{2}\xi_2$, now $|\xi|<\frac{1}{2}|\xi_2|$.
			\item If $\xi_1<0$ and $\xi_2>0$ then $\frac{2}{3}\xi_2<-\xi_1<\frac{3}{2}\xi_2\Longrightarrow \frac{-1}{3}\xi_2<-\xi <\frac{1}{2}\xi_2$, which give us $|\xi|<\frac{1}{2}|\xi_2|$.
		\end{enumerate}
		
	\end{enumerate}
	
	The cases (1), (2), (3.a) and (3.b) are valid for $\kappa<0 $ and $s<2{\kappa}+\frac{1}{2}$.
	
	Indeed, let $\mathcal{C}\subset \mathbb{R}^4$ be the set of element $\mathbb{R}^4$ that satisfies one of the conditions (1), (2), (3.a) or (3.b).
	Now consider 
	$\mathcal{C}_{i}=\mathcal{S}_{i}\cap \mathcal{C}$.

	Analyzing the restrictions on $\mathcal{C}_{i}$, we get:
	\begin{eqnarray*}
		\dfrac{1}{\langle\lambda\rangle^{2d}}\displaystyle\int_{\mathbb{R}}\dfrac{\langle \xi\rangle^{2s}\langle \xi_1\rangle^{-2{\kappa} } \langle \xi_2\rangle^{-2{\kappa} }\chi_{\mathcal{C}_{1}}}{\langle\tau+\xi_2^2-2\xi\xi_2+\xi^2\rangle^{2b}}d\xi_2 & \leq & \dfrac{1}{\langle\lambda\rangle^{2d}}\displaystyle\int_{\mathbb{R}}\dfrac{\langle \xi\rangle^{2s-4\kappa} \chi_{\mathcal{C}_{1}}}{\langle\tau+\xi_2^2-2\xi\xi_2+\xi^2\rangle^{2b}}d\xi_2\\
		& \leq &\dfrac{1}{\langle\lambda\rangle^{2d}}\displaystyle\int_{\mathbb{R}}\dfrac{\langle \xi\rangle^{2s-4\kappa-1} }{\langle\eta\rangle^{2b}}d\xi_2\\
		& \leq & c, \ \mbox{ because } 1/2<b<1 \mbox{ and } s<2{\kappa}+1/2.
	\end{eqnarray*}

	\begin{eqnarray*}
		\dfrac{1}{\langle\lambda_2\rangle^{2b}}\displaystyle\int_{\mathbb{R}}\dfrac{\langle \xi\rangle^{2s}\langle \xi_1\rangle^{-2{\kappa} } \langle \xi_2\rangle^{-2{\kappa} }\chi_{\mathcal{C}_{2}}}{\langle\tau_2+(a-1)\xi^2-\xi_2^2+2\xi\xi_2\rangle^{2d}}d\xi
		&&\leq \dfrac{1}{\langle\lambda_2\rangle^{2b}}\displaystyle\int_{\langle\eta\rangle\leq 4\langle\lambda_2\rangle}\dfrac{\langle \xi\rangle^{2s-4\kappa-1}}{\langle\eta\rangle^{2d}}d\eta \\
		&&\leq \dfrac{1}{\langle\lambda_2\rangle^{2b-2d}}\leq c.
	\end{eqnarray*}

	\begin{eqnarray*}
		\dfrac{1}{\langle\lambda_1\rangle^{2b}}\displaystyle\int_{\mathbb{R}}\dfrac{\langle \xi\rangle^{2s}\langle \xi_1\rangle^{-2{\kappa} } \langle \xi_2\rangle^{-2{\kappa} }\chi_{\mathcal{C}_{3}}}{\langle\tau_1+a\xi^2+\xi_2^2\rangle^{2d}}d\xi_2
		& &\leq \dfrac{1}{\langle\lambda_1\rangle^{2b}}\displaystyle\int_{\langle\eta\rangle\leq 4\langle\lambda_1\rangle}\dfrac{\langle \xi\rangle^{2s-4\kappa-1}}{\langle\eta\rangle^{2d}}d\eta \\
		&&\leq \dfrac{1}{\langle\lambda_1\rangle^{2b-2d}}\leq c.
	\end{eqnarray*}
	
	Consider $\mathcal{D}=\mathbb{R}^4 \setminus \mathcal{C}$  and $\mathcal{D}_{i}=\mathcal{S}_{i}\cap \mathcal{D}.$
	To obtain the other cases (which is equivalent to supposing $|\xi|<\frac{1}{2}|\xi_2|$ and $|\xi_1|\sim |\xi_2|$) let us consider the regions $\mathcal{D}_{i}$: 
	
	We begin by estimating $J_4$.
	
	\begin{eqnarray*}
		\dfrac{1}{\langle\lambda\rangle^{2d}}\displaystyle\int_{\mathbb{R}}\dfrac{\langle \xi\rangle^{2s}\langle \xi_1\rangle^{-2{\kappa} } \langle \xi_2\rangle^{-2{\kappa} }\chi_{\mathcal{D}_{1}}}{\langle\tau+\xi_2^2-2\xi\xi_2+\xi^2\rangle^{2b}}d\xi_2 & \leq  & \dfrac{1}{\langle\lambda\rangle^{2d}} \displaystyle\int_{\mathbb{R}}\dfrac{\langle \xi\rangle^{2s}\langle \xi_1\rangle^{-4\kappa} \chi_{\mathcal{D}_{1}}}{\langle\tau+\xi_2^2-2\xi\xi_2+\xi^2\rangle^{2b}}d\xi_2\\
		& \leq & \dfrac{1}{\langle\lambda\rangle^{2d}}\displaystyle\int_{\mathbb{R}}\dfrac{\langle \xi\rangle^{2s}\langle \xi_1\rangle^{-2{\kappa}} }{\langle\eta \rangle^{2b}}d\eta.
	\end{eqnarray*}
	Now, $|\xi_2-\xi|\geq |\xi_2|-|\xi|\geq \frac{1}{2}|\xi_2|\sim \frac{1}{2}|\xi_1|$.\\
	Hence,
	$J_4\leq \langle \xi\rangle^{2s-4d}\langle \xi_1\rangle^{-4\kappa-1}\displaystyle\int_{\mathbb{R}}\dfrac{d\eta}{\langle\eta \rangle^{2b}}$, the right hand side is bounded because $2b>1$, $2s\leq 4\kappa+2$ and $1/4<d<1/2$ in addition,
	$$\langle \xi\rangle^{2s-4d}\langle \xi_1\rangle^{-4\kappa-1}\leq \langle \xi \rangle^{2s-4\kappa-1-4d}\leq \langle \xi \rangle^{1-4d}.$$
	Estimating $J_5$:
	\begin{eqnarray*}
		J_5 &&= \dfrac{1}{\langle\lambda_2\rangle^{2b}}\displaystyle\int_{\mathbb{R}}\dfrac{\langle \xi\rangle^{2s}\langle \xi_1\rangle^{-2{\kappa} } \langle \xi_2\rangle^{-2{\kappa} }\chi_{\mathcal{B}_{2}}}{\langle\tau_2+(a-1)\xi^2-\xi_2^2+2\xi\xi_2\rangle^{2d}}d\xi\\
		&&\leq  \dfrac{1}{\langle\lambda_2\rangle^{2b}}\displaystyle\int_{\mathbb{R}}\dfrac{\langle \xi\rangle^{2s}\langle \xi_2\rangle^{-4\kappa}\chi_{\mathcal{B}_{2}}}{\langle\tau_2+(a-1)\xi^2-\xi_2^2+2\xi\xi_2\rangle^{2d}}d\xi.
	\end{eqnarray*}
	Setting $\eta=\tau_2+(a-1)\xi^2-\xi_2^2+2\xi\xi_2$, which give $d\eta=2(\xi_2+(a-1)\xi)d\xi$. As $0<a<\frac{1}{2},$ we have $|a-1|\leq 1$ and therefore $|\xi_2+(a-1)\xi|\geq \frac{1}{2}|\xi_2|$.
	Also we note that 
	\begin{eqnarray*}
		|\eta| &&=|\tau_2+(a-1)\xi^2-\xi_2^2+2\xi\xi_2|\\
		&&=|(\lambda_2)+((a-1)\xi^2-2\xi_2^2+2\xi\xi_2)|\\
		&&\leq |\lambda_2|+|(a-1)\xi^2-2\xi_2^2+2\xi\xi_2|\leq |\tau_2+\xi_2|+4|\xi_2|^2\\
		&& \leq c|\lambda_2|.
	\end{eqnarray*}
	Hence,
	\begin{eqnarray*}
		J_5 && \leq  \dfrac{1}{\langle\lambda_2\rangle^{2b}}\displaystyle\int_{\langle\eta \rangle \leq c\langle\lambda_2\rangle}\dfrac{\langle \xi\rangle^{2s}\langle \xi_2\rangle^{-4\kappa-1}}{\langle \eta\rangle^{2d}}d\eta \\
		&&\leq \dfrac{1}{\langle\lambda_2\rangle^{2b}}\displaystyle\int_{\langle\eta \rangle\leq c\langle\lambda_2\rangle}\dfrac{\langle \xi_2\rangle^{\max\{0,2s\}-4\kappa-1}}{\langle\eta \rangle^{2d}}d\eta \ \ \mbox{ because } |\xi|<\frac{1}{2}|\xi_2|\\
		&& \leq \langle \xi_2\rangle^{\max\{0,2s\}-4\kappa-1}\dfrac{\langle\lambda_2\rangle^{1-2d}}{\langle\lambda_2\rangle^{2b}}\\
		&&\leq  \langle \xi_2\rangle^{\max\{0,2s\}-4\kappa-1}\langle\lambda_2\rangle^{1-2d-2b}\leq \langle \xi_2\rangle^{\max\{0,2s\}-4\kappa-2}.
	\end{eqnarray*}
	Since $1-2b-2d<-1/2$.

	Now, we estimate $J_6$.
	Remembering that 
	\begin{eqnarray*}
		J_6 &&= \dfrac{1}{\langle\lambda_1\rangle^{2b}}\displaystyle\int_{\mathbb{R}}\dfrac{\langle \xi\rangle^{2s}\langle \xi_1\rangle^{-2{\kappa} } \langle \xi_2\rangle^{-2{\kappa} }\chi_{\mathcal{B}_{3}}}{\langle\tau_1+a\xi^2+\xi_2^2\rangle^{2d}}d\xi_2\\
		&& \leq \dfrac{1}{\langle\lambda_1\rangle^{2b}}\displaystyle\int_{\mathbb{R}}\dfrac{\langle \xi\rangle^{2s}\langle \xi_1\rangle^{-4\kappa} \chi_{\mathcal{B}_{3}}}{\langle\tau_1+a\xi^2+\xi_2^2\rangle^{2d}}d\xi_2.
	\end{eqnarray*}
	Using $\eta=\tau_1+a\xi^2+\xi_2^2$, which give $d\eta=2\xi_2d\xi_2$. Now, 
	\begin{eqnarray*}
		|\eta| &&=|\tau_1+a\xi^2+\xi_2^2|\\
		&&=|(\lambda_1)+(a\xi^2+\xi_2^2-\xi_1^2)|\\
		&&\leq c |\lambda_1|.
	\end{eqnarray*}
	By using the fact that $|\xi_1|\sim |\xi_2|$, we have
	\begin{eqnarray*}
		J_6 && \leq \dfrac{1}{\langle\lambda_1\rangle^{2b}}\displaystyle\int_{\langle\eta \rangle\leq c\langle\lambda_1\rangle}\dfrac{\langle \xi\rangle^{2s}\langle \xi_1\rangle^{-4\kappa} }{|\xi_1|\langle\eta \rangle^{2d}}d\xi_2\\
		&&\leq \langle \xi_1\rangle^{\max\{0,2s\}-4\kappa-1}\dfrac{\langle\lambda_1\rangle^{1-2d}}{\langle\lambda_1\rangle^{2b}}\\
		&& \leq \langle \xi_1\rangle^{\max\{0,2s\}-4\kappa-2}.
	\end{eqnarray*}

	And this finishes the proof of Proposition \ref{p4}.
\end{proof}

\begin{remark}
	The lines $s=-{\kappa}-1/2$ and $s=2{\kappa}+1/2$ intersect each other at the point $\kappa=-\frac{1}{3}$.

\end{remark}

\subsection{Bilinear estimates for $\sigma=2$}
Next we prove a new bilinear estimates for the interaction terms in the case $\sigma=2$

\begin{prop}\label{p5}
	Assume that $a=1/2$ (equivalently $\sigma =2$). If $1/2<b<3/4$, $1/4<d<1/2$ and $|\kappa|\leq s$, then for $u\in X^{\kappa,b}$ and $v\in X_a^{s,b}$, the estimate below 	
	\begin{equation}
		\left\|\overline{u}\cdot v \right\|_{X^{\kappa,-d}}\leq C \left\|u \right\|_{X^{\kappa,b}}\cdot \left\|v \right\|_{X^{s,b}} 
	\end{equation}
	holds
\end{prop}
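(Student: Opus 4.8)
The plan is to run the argument from the proofs of Propositions \ref{p1} and \ref{p3}, which becomes noticeably shorter here: the hypothesis $|\kappa|\le s$ makes the frequency multiplier harmless, so no gain from the modulations is needed. First I would dualize exactly as in the proof of Proposition \ref{p1}. With $a=1/2$, set $f(\xi,\tau)=\langle\tau-\xi^2\rangle^{b}\langle\xi\rangle^{\kappa}\widehat{\overline{u}}(\xi,\tau)$ and $g(\xi,\tau)=\langle\tau+a\xi^2\rangle^{b}\langle\xi\rangle^{s}\widehat{v}(\xi,\tau)$, so that $\|f\|_{L^2}=\|u\|_{X^{\kappa,b}}$ and $\|g\|_{L^2}=\|v\|_{X^{s,b}_{a}}$; using the notation (\ref{3.2}), the desired inequality is then equivalent to $|W(f,g,\varphi)|\le C\,\|f\|_{L^2}\|g\|_{L^2}\|\varphi\|_{L^2}$ for every $\varphi\in L^2$.

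Next I would split $\mathbb{R}^4=\mathcal{R}_1\cup\mathcal{R}_2\cup\mathcal{R}_3$ according to which of $\langle\omega\rangle,\langle\omega_1\rangle,\langle\omega_2\rangle$ is the largest and, on each piece, apply Cauchy--Schwarz, H\"older and Fubini, performing first the integration in the remaining time variable by means of (\ref{conta1}) of Lemma \ref{l2.2} (admissible since $b>1/2$). As in the earlier propositions this reduces the estimate, verbatim, to the boundedness of the three functionals $J_1,J_2,J_3$ of (\ref{j1})--(\ref{j3}). The crucial simplification is that $\langle\xi\rangle^{2\kappa}\langle\xi_1\rangle^{-2\kappa}\le C\langle\xi_2\rangle^{2|\kappa|}$ together with $|\kappa|\le s$ yields $\langle\xi_2\rangle^{-2s+2|\kappa|}\le 1$, so in each $J_i$ the frequency weight in the numerator may simply be discarded and one is left with a single one--dimensional integral of the form $\langle Q\rangle^{-2b}$ or $\langle Q\rangle^{-2d}$ with $Q$ a quadratic polynomial in the integration variable.

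It then remains to bound these three integrals uniformly, and this is where the value $a=1/2$ plays its role: one only needs the quadratics to be nondegenerate. In $J_1$ one integrates $\langle\tau-(a-1)\xi_2^2-2\xi\xi_2+\xi^2\rangle^{-2b}$ in $\xi_2$, whose argument has leading coefficient $-(a-1)=1-a=\tfrac12\ne0$; in $J_2$ one integrates $\langle\tau_2+2\xi^2-2\xi\xi_2+\xi_2^2\rangle^{-2d}$ in $\xi$, with leading coefficient $2$; and in $J_3$, after substituting $\xi=\xi_1+\xi_2$, one integrates $\langle\tau_1+\xi_1^2+2\xi_1\xi_2+(1-a)\xi_2^2\rangle^{-2d}$ in $\xi_2$, again with leading coefficient $1-a=\tfrac12$. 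After normalizing each quadratic to be monic, estimate (\ref{conta2}) of Lemma \ref{l2.2} applies because $2b>1/2$ and $2d>1/2$, giving $J_1\le C\langle\omega\rangle^{-2d}\le C$, $J_2\le C\langle\omega_2\rangle^{-2b}\le C$ and $J_3\le C\langle\omega_1\rangle^{-2b}\le C$. Collecting the three pieces, $|W|\le|W_1|+|W_2|+|W_3|\le C\,\|f\|_{L^2}\|g\|_{L^2}\|\varphi\|_{L^2}$, which is the claim.

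The point worth emphasizing — and the reason the case $\sigma=2$ is isolated — is that, unlike in Propositions \ref{p1} and \ref{p3}, the dispersion relation is of no help: with $\xi_1=\xi-\xi_2$ and $a=1/2$,
\[
|\omega-\omega_1-\omega_2|=\bigl|\xi^2+\xi_1^2-\tfrac12\xi_2^2\bigr|=2\bigl(\xi-\tfrac12\xi_2\bigr)^2,
\]
a perfect square that vanishes along the whole resonant line $\xi=\tfrac12\xi_2$ (equivalently $\xi_1=\xi_2$), so it cannot supply any positive power of $\langle\xi_2\rangle$. The only genuinely delicate point is therefore to realize that one can afford to throw the modulations away entirely, which works precisely because $|\kappa|\le s$ makes the numerator weight bounded and because, at $a=1/2$, the three quadratics above are still nondegenerate. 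Below the threshold $|\kappa|\le s$ this method breaks down, and I would expect the estimate itself to fail, the natural obstruction being a datum concentrated near $\xi=\tfrac12\xi_2$.
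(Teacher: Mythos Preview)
Your argument is correct and follows the same idea as the paper: since $|\kappa|\le s$ makes $\langle\xi_2\rangle^{-2s+2|\kappa|}\le 1$, no help from the dispersion relation is needed, and the remaining one-dimensional integral is controlled by (\ref{conta2}) because the quadratic is nondegenerate (leading coefficient $1-a=\tfrac12$) and $2b,2d>\tfrac12$. The only difference is economy: the paper simply takes $\mathcal{R}_1=\mathbb{R}^4$ and $\mathcal{R}_2=\mathcal{R}_3=\varnothing$, so only $J_1$ has to be bounded; your three-fold splitting according to the largest modulation is harmless but unnecessary, since you never actually exploit that information in bounding $J_1,J_2,J_3$.
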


The second bilinear estimate tells us that 
\begin{prop}\label{p6}
	Let $a=1/2$ (equivalently $\sigma =2$) and  $u, \tilde{u}\in X^{\kappa,b}$, then  
	\begin{equation}
		\left\|u\cdot \tilde{u} \right\|_{X^{s,-d}}\leq C \left\|u \right\|_{X^{\kappa,b}}\cdot \left\|\tilde{u} \right\|_{X^{\kappa,b}}
	\end{equation}
	holds if $1/2<b<3/4$, $1/4<d<1/2$ and $0 \leq s\leq \kappa $.
\end{prop}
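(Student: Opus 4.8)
The plan is to reproduce, with $a=\tfrac12$, the scheme already used for Propositions \ref{p2} and \ref{p4}. Setting $f(\xi,\tau)=\langle\xi\rangle^{\kappa}\langle\tau+\xi^{2}\rangle^{b}\widehat{u}(\xi,\tau)$ and $g(\xi,\tau)=\langle\xi\rangle^{\kappa}\langle\tau+\xi^{2}\rangle^{b}\widehat{\tilde u}(\xi,\tau)$, so that $\|f\|_{L^{2}}=\|u\|_{X^{\kappa,b}}$ and $\|g\|_{L^{2}}=\|\tilde u\|_{X^{\kappa,b}}$, the estimate is equivalent, by duality, to $|W(f,g,\varphi)|\lesssim\|f\|_{L^{2}}\|g\|_{L^{2}}\|\varphi\|_{L^{2}}$ for the trilinear form with multiplier $\langle\xi\rangle^{s}\langle\xi_{1}\rangle^{-\kappa}\langle\xi_{2}\rangle^{-\kappa}\big/\big(\langle\lambda\rangle^{d}\langle\lambda_{1}\rangle^{b}\langle\lambda_{2}\rangle^{b}\big)$, where $\xi=\xi_{1}+\xi_{2}$, $\tau=\tau_{1}+\tau_{2}$, $\lambda=\tau+\tfrac12\xi^{2}$ and $\lambda_{j}=\tau_{j}+\xi_{j}^{2}$. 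Decomposing $\mathbb{R}^{4}$ into three pieces according to which of $|\lambda|,|\lambda_{1}|,|\lambda_{2}|$ is the largest --- so that $\langle\lambda\rangle^{-2d}$, respectively $\langle\lambda_{1}\rangle^{-2b}$ or $\langle\lambda_{2}\rangle^{-2b}$, may be factored out --- applying Cauchy--Schwarz, and integrating out the resolved time variable by means of (\ref{conta1}) (which is allowed since $2b>1$), the problem reduces exactly as before to proving that the functionals $J_{4}$, $J_{5}$, $J_{6}$ of (\ref{j4})--(\ref{j6}), now with $a=\tfrac12$, are bounded.

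For $a=\tfrac12$ the dispersion relation degenerates into a perfect square, $\lambda-\lambda_{1}-\lambda_{2}=\tfrac12\xi^{2}-\xi_{1}^{2}-\xi_{2}^{2}=-\tfrac12(\xi_{1}-\xi_{2})^{2}$: the two characteristic lines present in the case $\sigma<2$ coalesce into the single diagonal $\xi_{1}=\xi_{2}$. In fact, under the hypothesis $0\le s\le\kappa$ one does not even need this relation, because the symbol is bounded: writing $\xi=\xi_{1}+\xi_{2}$ and assuming without loss of generality $|\xi_{1}|\le|\xi_{2}|$, one has $\langle\xi\rangle\le 2\langle\xi_{2}\rangle$, hence
\[
\langle\xi\rangle^{2s}\langle\xi_{1}\rangle^{-2\kappa}\langle\xi_{2}\rangle^{-2\kappa}\ \le\ 4^{s}\,\langle\xi_{2}\rangle^{2s-2\kappa}\,\langle\xi_{1}\rangle^{-2\kappa}\ \le\ 4^{s},
\]
uniformly in $(\xi_{1},\xi_{2})$; here $\kappa\ge 0$ is what forces $\langle\xi_{1}\rangle^{-2\kappa}\le 1$ and $s\le\kappa$ is what forces $\langle\xi_{2}\rangle^{2s-2\kappa}\le 1$. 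It is essential to keep this product together and \textbf{not} to extract the factor $\langle\xi\rangle^{2s}$ alone, since $\xi$ is one of the $L^{\infty}$-variables in the region where $|\lambda|$ dominates.

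With this bound at hand, each $J_{i}$ is immediate. In the region where $\langle\lambda\rangle^{-2d}$ is factored out, one uses $\langle\lambda\rangle^{-2d}\le 1$ and is left with $\int_{\mathbb{R}}\langle\xi_{2}^{2}-2\xi\xi_{2}+\xi^{2}+\tau\rangle^{-2b}\,d\xi_{2}$, an integral of a genuine quadratic in $\xi_{2}$, hence $O(1)$ by (\ref{conta2}) (using $2b>1>\tfrac12$); so $J_{4}\lesssim 1$. In the regions where $\langle\lambda_{1}\rangle^{-2b}$ or $\langle\lambda_{2}\rangle^{-2b}$ is factored out, one uses $\langle\lambda_{j}\rangle^{-2b}\le 1$ and, after the $\tau$-integration, is left with $\int_{\mathbb{R}}\langle\alpha_{0}+\alpha_{1}t+c\,t^{2}\rangle^{-2d}\,dt$ with $c=-\tfrac12$ for $J_{5}$ and $c=\tfrac32$ for $J_{6}$ --- both non-degenerate precisely because the choice $a=\tfrac12$ keeps the coefficient of the quadratic term away from $0$ --- which is again $O(1)$ by (\ref{conta2}), this time using $2d>\tfrac12$, i.e. $d>\tfrac14$. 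Thus $J_{5},J_{6}\lesssim 1$ as well, and the proposition follows.

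The argument is essentially bookkeeping; the only points that require care are (i) never separating $\langle\xi\rangle^{2s}$ from the rest of the symbol in the $|\lambda|$-dominant region, and (ii) checking that every one-dimensional frequency integral produced by Fubini is over a non-degenerate quadratic --- which is exactly why the borderline value $a=\tfrac12$ still behaves well. Note that only $b>\tfrac12$ and $d>\tfrac14$ are genuinely used in this estimate; the upper bounds $b<\tfrac34$ and $d<\tfrac12$ are not needed here.
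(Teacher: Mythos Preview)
Your argument is correct, but it does a bit more work than the paper's. The key observation is the same: for $0\le s\le\kappa$ the weight $\langle\xi\rangle^{2s}\langle\xi_1\rangle^{-2\kappa}\langle\xi_2\rangle^{-2\kappa}$ is uniformly bounded, so no gain from the dispersion relation is needed. The paper exploits this by simply taking $\mathcal{S}_1=\mathbb{R}^4$ and $\mathcal{S}_2=\mathcal{S}_3=\varnothing$, so that only $J_4$ has to be bounded; it then uses $\langle\xi_1\rangle^{-2\kappa}\langle\xi_2\rangle^{-2\kappa}\le\langle\xi\rangle^{-2\kappa}$ (valid for $\kappa\ge 0$), which reduces $J_4$ to $\langle\lambda\rangle^{-2d}\int\langle\xi\rangle^{2s-2\kappa}\langle\tau+\xi_2^2-2\xi\xi_2+\xi^2\rangle^{-2b}\,d\xi_2$ and finishes with \eqref{conta2} since $s\le\kappa$ and $b>\tfrac12$. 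You instead keep the three-region decomposition and bound $J_5$, $J_6$ as well, via the non-degeneracy of the quadratics (coefficients $-\tfrac12$ and $\tfrac32$). This is perfectly valid and has the virtue of making explicit why the endpoint $a=\tfrac12$ is harmless, but it is not needed: since the regions $\mathcal{S}_j$ may be chosen freely, once the symbol is bounded there is no reason to ever leave $\mathcal{S}_1$. Your closing remark that only $b>\tfrac12$ and $d>\tfrac14$ are actually used here is correct (and in fact the paper's shorter route uses only $b>\tfrac12$ and $d>0$).
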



\begin{proof}[Proof of Proposition \ref{p5}:]
We begin by noting that
	\begin{eqnarray*}
		|\omega-\omega_1-\omega_2|&=& \left|\xi^2+\xi_1^2-\frac{1}{2}\xi_2^2\right|\\
		&=&\left|2\xi^2+2\xi\xi_2+\frac{1}{2}\xi_2^2\right|\\
		&=&2\left|\xi+\frac{1}{2}\xi_2\right|^2.
	\end{eqnarray*}
	
	In this case, we do not have to take the dispersion relation. Then, we consider 
	$\mathcal{R}_1=\mathbb{R}^4$ and $\mathcal{R}_2=\mathcal{R}_3=\varnothing $. Thus, we only need to prove that $J_1$ is bounded.
	If $|\kappa|\leq s$ then $J_1$  is equivalent to $$\dfrac{1}{\langle\omega\rangle^{2d}}\displaystyle\int_{|\xi_2|\leq 1}\dfrac{1}{\langle\tau-\frac12\xi_2^2-2\xi\xi_2+\xi^2\rangle^{2b}}d\xi_2\leq c,$$ 
	since $b>1/2$ and $d>0$. This finishes the proof of the proposition.
\end{proof}

\begin{proof}[Proof of Proposition \ref{p6}:]
	
	As in the previous case, we cannot take advantage of the dispersion relation. So let us take $\mathcal{S}_1=\mathbb{R}^4$ and $\mathcal{S}_2=\mathcal{S}_3=\varnothing$. Note that it is enough to estimate $J_4$. Initially assume that $\kappa \geq 0$, so we get 
	$\langle \xi_1 \rangle^{-2{\kappa} } \langle \xi_2 \rangle^{-2{\kappa} }\leq \langle \xi \rangle^{-2{\kappa} }$:
	
	\begin{equation*}
		J_4 \leq \dfrac{1}{\langle\lambda\rangle^{2d}}\displaystyle\int_{\mathbb{R}}\dfrac{\langle \xi\rangle^{2s-2{\kappa} } \chi_{\mathcal{S}_1}}{\langle\tau+\xi_2^2-2\xi\xi_2+\xi^2\rangle^{2b}}d\xi_2. 
	\end{equation*}
	Finally, since $s\leq \kappa$, $b>1/2$ and $d>0$, we conclude that $J_4$ is bounded.
\end{proof}

\section{Local existence for low regularity data}

In this section we prove, by using the Banach Fixed Point Theorem, the result of local well-posedness. We only show the case $0<a<1/2$ because the others follow the similar arguments.

Consider the following functional space  where we will get our solution:
\begin{equation}
	\Sigma_{\mu}:= \left\{ (u,v)\in X^{\kappa,\frac{1}{2}+\mu}\times X_a^{s,\frac{1}{2}+\mu}; \left\|u \right\|_{X^{\kappa,\frac{1}{2}+\mu}}\leq M_1, \ 
	\left\|v \right\|_{X_a^{s,\frac{1}{2}+\mu}}\leq M_2 \right\},
\end{equation} 
where $0<\mu\ll 1$  and $M_1, M_2>0$ will be chosen after.

We note that $\Sigma_{\mu}$ is a complete metric space with the standard norm:
\begin{equation}\label{soma}
\left\|(u,v)\right\|_{\Sigma_{\mu}}:=  \left\|u \right\|_{X^{\kappa,\frac{1}{2}+\mu}}  + \left\| v\right\|_{X_a^{s,\frac{1}{2}+\mu}}.
\end{equation}
For $(u,v)\in\Sigma_{\mu}$, we define the maps
\begin{equation}
	\Phi_1(u,v)=\psi_1(t) e^{it\partial^2_x}u_0-i\psi_T(t)\displaystyle\int_0^t e^{i(t-t')\partial^2_x}\left\{ \theta u(t') -\left(\overline{u} \cdot v\right)(t')\right\}dt',
\end{equation}

\begin{equation}
	\Phi_2(u,v)=\psi_1(t) e^{iat\partial^2_x}v_0-i\psi_T(t)\displaystyle\int_0^t e^{ia(t-t')\partial^2_x}\left\{ \alpha v(t') -\frac{a}{2}\left(u^2\right)(t')\right\}dt'.
\end{equation}
We will choose $\mu<\mu(\kappa,s)$ where $d=\frac12-2\mu(\kappa,s)$ and $b=\frac12+\mu(\kappa,s)$ satisfy the conditions of propositions \ref{p1} and \ref{p2}.

According to lemma \ref{l2.1}, with $b'=-d$ and propositions \ref{p1} and \ref{p2}, we have
\begin{eqnarray*}
	\left\| \Phi_1(u,v)\right\|_{X^{\kappa,\frac{1}{2}+\mu}}&&\leq c_0 \left\|u_0 \right\|_{H^{\kappa}}+c_1 T^{\mu}\left(\theta \left\|u \right\|_{X^{\kappa,-\frac12+2\mu}} +\left\|\overline{u}v \right\|_{X^{\kappa,-\frac12+2\mu}} \right)\\
	&& \leq c_0\left\| u_0\right\|_{H^{\kappa}}+c_1 T^{\mu}\left(\theta \left\|u \right\|_{X^{\kappa,\frac12+\mu}}+\left\| u\right\|_{X^{\kappa,\frac12+\mu}}
	\left\| v\right\|_{X_a^{s,\frac12+\mu}}\right)\\
	&& \leq c_0 \left\| u_0\right\|_{H^{\kappa}}+c_1 T^{\mu}\bigg(\theta M_1+M_1M_2\bigg),
\end{eqnarray*}

\begin{eqnarray*}
	\left\| \Phi_2(u,v)\right\|_{X_a^{s,\frac{1}{2}+\mu}}&&\leq c_0 \left\|v_0 \right\|_{H^s}+c_2 T^{\mu}\left(\alpha \left\|v \right\|_{X_a^{s,-\frac12+2\mu}} +\frac{a}{2}\left\|u^2 \right\|_{X^{\kappa,-\frac12+2\mu}} \right)\\
	&& \leq c_0\left\| v_0\right\|_{H^s}+c_2 T^{\mu}\left(\alpha
	\left\|v \right\|_{X_a^{s,\frac12+\mu}}+\frac{a}{2}\left\| u\right\|_{X^{\kappa,\frac12+\mu}}^2
	\right)\\
	&& \leq c_0 \left\| v_0\right\|_{H^s}+c_2 T^{\mu}\bigg(\alpha M_2+\frac{a}{2}M_1^2\bigg).
\end{eqnarray*}
Defining $M_1=2c_0\left\|u_0 \right\|_{H^k}$ and $M_2=2c_0\left\|v_0 \right\|_{H^s}$, we have the following 

\begin{equation*}
	\left\| \Phi_1(u,v)\right\|_{X^{\kappa,\frac{1}{2}+\mu}}\leq \dfrac{M_1}{2}+c_1 T^{\mu}\bigg(\theta M_1+M_1 M_2\bigg)
\end{equation*}
and
\begin{equation*}
	\left\| \Phi_2(u,v)\right\|_{X_a^{s,\frac{1}{2}+\mu}}\leq \dfrac{M_2}{2}+c_2 T^{\mu} \bigg(\alpha M_2+\frac{a}{2}M_1^2\bigg).
\end{equation*}
Then $\left(\Phi_1(u,v),\Phi_2(u,v)\right)\in \Sigma_{\mu}$ for 
\begin{equation}\label{t}
	T^{\mu} \leq \dfrac{1}{2}\min\left\{ \dfrac{1}{c_1(\theta+M_2)},\ \dfrac{M_2}{c_2(\alpha M_2+\frac{a}{2}M_1^2)}\right\}.
\end{equation}
Similarly, we have that
\begin{equation*}
	\left\| \Phi_1(u,v)-\Phi_1(\tilde{u},\tilde{v})\right\|_{X^{\kappa,\frac{1}{2}+\mu}}\leq c_3(M_1, M_2) T^{\mu}\bigg( \left\|u-\tilde{u}\right\|_{X^{\kappa,\frac{1}{2}+\mu}}+\left\|v-\tilde{v}\right\|_{X_a^{s,\frac{1}{2}+\mu}}\bigg), 
\end{equation*}

\begin{equation*}
	\left\| \Phi_2(u,v)-\Phi_2(\tilde{u},\tilde{v}) \right\|_{X_a^{s,\frac{1}{2}+\mu}}\leq c_4(M_1, M_2) T^{\mu}\bigg( \left\|u-\tilde{u}\right\|_{X^{\kappa,\frac{1}{2}+\mu}}+\left\|v-\tilde{v}\right\|_{X_a^{s,\frac{1}{2}+\mu}}\bigg).
\end{equation*}
Now, using (\ref{soma}) and inequalities above, we have
\begin{equation}\label{tt}
	\left\|\bigg(\Phi_1(u,v),\Phi_2(u,v) \bigg)- \bigg(\Phi_1(\tilde{u},\tilde{v}),\Phi_2(\tilde{u},\tilde{v}) \bigg) \right\|_{\Sigma_{\mu}}\leq \frac12 \left\|(u,v)-(\tilde{u}, \tilde{v}) \right\|_{\Sigma_{\mu}}. 
\end{equation}
to
$$T^{\mu}\leq \frac14\min \left\{ \frac{1}{c_3(M_1, M_2)}, \frac{1}{c_4(M_1,M_2)} \right\}.$$
Therefore, the map $\Phi_1\times \Phi_2:\Sigma_{\mu}\longrightarrow \Sigma_{\mu} $ is a contraction, and by the Fixed Point Theorem there is a unique solution to the Cauchy problem for $T$ satisfying (\ref{t}) and (\ref{tt}).

\begin{flushright}
	$\Box$
\end{flushright}

\begin{remark}
	The case $p=q=-1$ can be treated by using the same ideas that in the case $p=q=1$, for any $\sigma>0$.
	
\end{remark}

\begin{remark}
	The case $p=-1$ and $q=1$ or $p=1$ and $q=-1$ (for all $\sigma>0$) is the same in the case $p=q=1$ for $\sigma >2$.
	
\end{remark}
\section{Global Well-Posedness Results}
In this section we will study the global well-posedness for the system (\ref{nls-g}) below:

\begin{equation}\label{nls-g}
	\begin{cases}
		i\partial_t u+p\partial^2_x u -\theta u+ \overline{u} v =0 & \\
		i\sigma\partial_t v+q\partial^2_x v -\alpha v+\frac{1}{2}u^2=0,& t\in [-T,T], \ x\in \mathbb{R},  \\
		u(x,0)=u_0(x), \ \   v(x,0)=v_0(x), &   \ \      (u_0,v_0) \in H^{\kappa}(\mathbb{R})\times H^s(\mathbb{R}), 
	\end{cases}
\end{equation}
where $u$ and $v$ are complex valued functions.

One of the interests in working with system of equations in physics is to obtain stability for certain types of solutions. In this case, it is essential to have global well-posedness results.
%

Starting from the conservation law
%
\begin{equation}
	E(u,v)(t)=\left\|u \right\|^2_{L^2} +2\sigma \left\|v \right\|^2_{L^2},
\end{equation}
it is known that if $u$ and $v$ are solutions of this system with  initial conditions  $(u_0,v_0)\in L^2\times L^2$, then $\forall t\in \mathbb{R}$ we have $E(u,v)(t)=E(u,v)(0)=\left\| u_0\right\|^2_{L^2}+ 2\sigma \left\|v_0 \right\|^2_{L^2}.$

\vspace{0.5cm}

Our main result presented here is theorem \ref{global}.
%

To get the above result we will follow the ideas presented in \cite{tao-2001}, \cite{corcho-2007}, \cite{pecher-2005} and \cite{corcho-2009}.


We note here that we did not explore the second quantity conserved for light  regularities, for example, greater than 1, i.e.,
\begin{equation}
	H(u,v)(t)=p\left\|u_x \right\|_{L^2}^2+q\left\|v_x \right\|_{L^2}^2+\theta\left\|u \right\|_{L^2}^2+\alpha\left\|v \right\|_{L^2}^2-\mbox{Re}\langle u^2,\ \overline{v}\rangle_{L^2}.
\end{equation}

\subsection{Preliminary results}
This section is devoted to the proof of the global well-posedness result stated in
theorem \ref{global} via the \textbf{I}-method.

Let $s\leq 0$ and $N>1$ be fixed. Let us define the Fourier multiplier operator 
\begin{equation}
	\widehat{I^{-s}_N u}(\xi)=\widehat{I u}(\xi)=m(\xi)\widehat{u}(\xi), \ m(\xi)=\left\lbrace \begin{array}{ll}
		1, & |\xi|<N, \\
		N^{-s}|\xi|^s, & |\xi|\geq 2N
	\end{array} \right.
\end{equation} 
where $m$  is a smooth non-negative function.   

\begin{lem}\label{aa}
	The operator $I$ applies $H^s(\mathbb{R})\longmapsto L^2 $. Moreover, the operator $I$ commute with differential operators and $\overline{Iu}=I\overline{u}$. That is,
	
	\begin{enumerate}
		\item $\left\| I(u)\right\|_{L^2}\leq c N^{-s}\left\| u\right\|_{H^s}$
		\item $P(D)I(u)= I\left( P(D)u\right)$,
	\end{enumerate}
	where $P$ is a polynomial and $D=\dfrac{d}{idx}$ is the differential operator.
	
\end{lem}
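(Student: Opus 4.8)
The plan is to verify each of the three claims in Lemma~\ref{aa} directly from the definition of the multiplier $m$, working on the Fourier side where the operator $I$ is diagonal. The key observation is that $m$ is a smooth radial symbol that is bounded and behaves like $N^{-s}|\xi|^s$ for large frequencies; since $s\le 0$ this means $m$ is bounded below by a positive constant on $|\xi|<N$ and decays polynomially for $|\xi|\ge 2N$, so in particular $0<m(\xi)\le 1$ everywhere after normalizing.

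First I would prove item (1). By Plancherel, $\|Iu\|_{L^2}^2=\int_{\mathbb{R}}m(\xi)^2|\widehat u(\xi)|^2\,d\xi$, and I want to compare this with $\|u\|_{H^s}^2=\int_{\mathbb{R}}\langle\xi\rangle^{2s}|\widehat u(\xi)|^2\,d\xi$. It suffices to show the pointwise bound $m(\xi)\le C N^{-s}\langle\xi\rangle^{s}$ for all $\xi$. On the region $|\xi|<N$ we have $m(\xi)=1$ and $N^{-s}\langle\xi\rangle^{s}\ge N^{-s}\cdot c\,N^{s}=c$ (using $\langle\xi\rangle\sim N$ is false, but $\langle\xi\rangle\le \langle N\rangle$ and $s\le0$ gives $\langle\xi\rangle^{s}\ge \langle N\rangle^{s}\sim N^{s}$), so the bound holds with a dimensional constant; on $|\xi|\ge 2N$ we have $m(\xi)=N^{-s}|\xi|^{s}\le C N^{-s}\langle\xi\rangle^{s}$ since $|\xi|\sim\langle\xi\rangle$ there; on the transition region $N\le|\xi|\le 2N$ one uses that $m$ is smooth and bounded by $1$ while $N^{-s}\langle\xi\rangle^{s}$ is bounded below by a constant. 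Combining these gives $\|Iu\|_{L^2}\le C N^{-s}\|u\|_{H^s}$.

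Next, item (2): since both $P(D)$ and $I$ act on the Fourier side as multiplication by the symbols $P(i\xi)$ (or the appropriate polynomial in $\xi$) and $m(\xi)$ respectively, and multiplication operators on $L^2_\xi$ commute, we get $\widehat{P(D)Iu}(\xi)=P(i\xi)m(\xi)\widehat u(\xi)=m(\xi)P(i\xi)\widehat u(\xi)=\widehat{IP(D)u}(\xi)$, which is the claimed identity; this is essentially immediate. Finally, the reality-type property $\overline{Iu}=I\overline{u}$ follows because $m$ is real-valued and even: one has $\widehat{\overline{Iu}}(\xi)=\overline{\widehat{Iu}(-\xi)}=\overline{m(-\xi)\widehat u(-\xi)}=m(\xi)\overline{\widehat u(-\xi)}=m(\xi)\widehat{\overline u}(\xi)=\widehat{I\overline u}(\xi)$.

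I do not expect any genuine obstacle here; the only mild subtlety is handling the transition region $N\le|\xi|\le 2N$ in item (1), where one must use the smoothness and uniform boundedness of $m$ rather than an explicit formula, but this contributes only a harmless constant. The whole lemma is a routine consequence of the fact that $I$ is a Fourier multiplier with a bounded, real, even symbol comparable to $N^{-s}\langle\xi\rangle^{s}$.
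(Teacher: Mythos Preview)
Your proposal is correct and follows exactly the route the paper indicates: the paper's own proof is the single line ``It follows from the definition of $I$ and properties of the Fourier Transform,'' and you have simply written out those Fourier-side verifications in detail. One minor slip is that with $D=\dfrac{d}{idx}$ the symbol of $P(D)$ is $P(\xi)$ rather than $P(i\xi)$, but this does not affect the commutation argument.
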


\begin{proof}
	It follows from the definition of $I$ and properties of the Fourier Transform.
\end{proof}
%
%
%
%
%
%

We will need the following

\begin{lem}[Lemma 12.1 of \cite{colliander-2004}]
	Let $\alpha_0>0$ and $n\geq 1$. Suppose $Z, \ X_1,\cdots,\ X_n$ are translation-invariant Banach spaces and $T$ is a translation invariant $n-$linear
	operator such that
	\begin{equation*}
		\left\|I^{\alpha}_1 T(u_1,\cdots, u_n) \right\|_Z \leq c \ \displaystyle\prod_{j=1}^n \left\| I_1^{\alpha}u_j\right\|_{X_j}, 
	\end{equation*} 
	for all $u_1, \cdots, \ u_n$, $0\leq \alpha\leq \alpha_0$. Then,
	\begin{equation*}
		\left\|I^{\alpha}_N T(u_1,\cdots, u_n) \right\|_Z \leq c \ \displaystyle\prod_{j=1}^n \left\| I_N^{\alpha}u_j\right\|_{X_j}, 
	\end{equation*} 
	for all $u_1, \cdots, \ u_n$, $0\leq \alpha\leq \alpha_0$ and $N\geq 1$. Here, the implied constant is independent of  $N$.
\end{lem}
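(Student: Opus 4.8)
The conceptual point is that the multiplier defining $I_N$ is just a dilate of the one defining $I_1$: with the normalisation in the statement one checks directly that $m_N(\xi)=m_1(\xi/N)$ for all $\xi$ (both equal $1$ for $|\xi|<N$ and both equal $N^{-s}|\xi|^s$ for $|\xi|\ge 2N$), and hence, since conjugating a Fourier multiplier by the dilation $S_\lambda f(x):=f(\lambda x)$ rescales its symbol, one obtains the intertwining identity $I_N^{\alpha}=S_N\,I_1^{\alpha}\,S_{1/N}$. The plan is to use this identity to transport the estimate from scale $1$ to scale $N$, the only subtlety being that the abstract spaces $Z,X_1,\dots,X_n$ are assumed merely translation invariant, not dilation invariant, so the dilations cannot simply be peeled off the norms.

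To handle this I would argue at the level of Fourier symbols. Since $T$ is translation invariant and $n$-linear, $\widehat{T(u_1,\dots,u_n)}(\xi)$ is the superposition of $\tau(\xi_1,\dots,\xi_n)\prod_j\widehat{u_j}(\xi_j)$ over the hyperplane $\xi=\sum_j\xi_j$, so the two sides of the desired inequality differ from those of the hypothesis only in that $m_N(\cdot)^{\alpha}$, rather than $m_1(\cdot)^{\alpha}$, sits at the output frequency $\xi=\sum_j\xi_j$ and at the inputs $\xi_j$. I would then Littlewood--Paley decompose each $u_j$ into the part supported in $\{|\xi_j|\le N\}$, where $m_N\equiv 1$, and dyadic pieces supported in $\{|\xi_j|\sim 2^{k}N\}$, $k\ge 1$, on each of which $m_N$ is comparable to the constant $2^{ks}$. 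On each such combination of frequency blocks, $m_N$ at the output frequency can be redistributed onto the input frequencies at a bounded cost, the redistributed exponents staying in $[0,\alpha_0]$; after the dilation $\xi_j\mapsto\xi_j/N$ this redistribution is exactly the one valid for $m_1$, which is precisely what the \emph{family} of hypotheses (one for each $\alpha\in[0,\alpha_0]$) supplies. Summing the dyadic pieces by the triangle inequality in $Z$ and in the $X_j$, with the hypothesis applied blockwise with the matched exponent, assembles the claimed inequality, the implied constant being a finite geometric sum of the constants furnished by the hypotheses and therefore independent of $N$.

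The step I expect to be the real obstacle is precisely this uniform-in-$N$ redistribution of $m_N^{\alpha}$ from the output frequency to the input frequencies, together with the convergence of the ensuing dyadic summation: one must verify that the transition region $|\xi|\sim N$ of $m_N$ produces no $N$-dependent loss, which is the reason the hypothesis is imposed for the whole interval $0\le\alpha\le\alpha_0$ and where the smoothness and monotonicity of $m$ enter. The remaining ingredients --- the reduction to the symbol comparison, the frequency decomposition, and the reassembly --- are routine once this uniform bound is in hand. (In the present paper one may of course simply invoke Lemma 12.1 of \cite{colliander-2004}; the above is the mechanism behind it.)
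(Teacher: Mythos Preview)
The paper does not give its own proof of this lemma: it is simply quoted from \cite{colliander-2004} and used as a black box. So there is no in-paper argument to compare your sketch against; your closing parenthetical (``one may of course simply invoke Lemma 12.1 of \cite{colliander-2004}'') is exactly what the paper does.

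As for the sketch itself, the scaling identity $I_N^{\alpha}=S_N\,I_1^{\alpha}\,S_{1/N}$ is the right starting point, and you are correct that the abstract spaces are only assumed translation invariant. The original proof in \cite{colliander-2004}, however, is shorter than the Littlewood--Paley redistribution you outline: it works directly at the symbol level, comparing the multiplier $m_N(\xi)^{\alpha}\big/\prod_j m_N(\xi_j)^{\alpha}$ pointwise to a superposition of the $m_1$ versions across the allowed range $0\le\alpha\le\alpha_0$, so that the family of hypotheses for $I_1^{\alpha}$ can be invoked without ever dilating the norms. Your decomposition would reach the same conclusion but with more bookkeeping; the ``real obstacle'' you flag --- the uniform-in-$N$ redistribution across the transition region --- is handled in \cite{colliander-2004} by exactly this pointwise symbol comparison rather than by summing dyadic blocks.
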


Another essential result is

\begin{lem}[Lemma 5.1 of \cite{corcho-2009}]\label{chave}
	We have
	$$
	\left\|\left(D^{1/2}_xf\right)\cdot g \right\|_{L^2_{x,t}}\leq c \left\|f \right\|_{X^{0,1/2}} \left\|g \right\|_{X^{0,1/2}}, 
	$$	
	if $|\xi_2|\ll |\xi_1|$ for any $|\xi_1|\in\mbox{supp}\left(\widehat{f}\right)$ and $|\xi_2|\in\mbox{supp}\left(\widehat{g}\right)$. Moreover, this estimate is
	true if $f$ and/or $g$ is replaced by its complex conjugate in the left-hand side of the	inequality.
	
\end{lem}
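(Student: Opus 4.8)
The plan is to prove this refined bilinear estimate by the same duality-plus-Plancherel scheme used in the proofs of Propositions \ref{p1}--\ref{p6}, the new feature being that the frequency separation $|\xi_2|\ll|\xi_1|$ makes the derivative of the resonance function comparable to $|\xi_1|$, which is exactly the gain needed to absorb $D^{1/2}_x$ and to dispense with the usual modulation case analysis. First I would dualize: by Plancherel, $\bigl\|(D^{1/2}_xf)\cdot g\bigr\|_{L^2_{x,t}}$ is the supremum over $\|h\|_{L^2_{x,t}}\le 1$ of $\bigl|\int |\xi_1|^{1/2}\,\widehat f(\xi_1,\tau_1)\,\widehat g(\xi_2,\tau_2)\,\overline{\widehat h(\xi,\tau)}\bigr|$, the integral running over $\xi=\xi_1+\xi_2$, $\tau=\tau_1+\tau_2$ in $\mathbb R^4$ and, by the support hypothesis, over the region $\{|\xi_2|\ll|\xi_1|\}$. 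Putting $F(\xi_1,\tau_1)=\langle\tau_1+\xi_1^2\rangle^{b}\widehat f(\xi_1,\tau_1)$ and $G(\xi_2,\tau_2)=\langle\tau_2+\xi_2^2\rangle^{b}\widehat g(\xi_2,\tau_2)$, so that $\|F\|_{L^2}=\|f\|_{X^{0,b}}$ and $\|G\|_{L^2}=\|g\|_{X^{0,b}}$, two applications of Cauchy--Schwarz (first integrating out $(\xi_1,\tau_1)$, then $(\xi,\tau)$) together with Fubini's theorem reduce everything --- exactly as in the reduction to the functionals $J_1,\dots,J_6$ above --- to the uniform bound
\[
A(\xi,\tau):=\int_{|\xi_2|\ll|\xi_1|}\frac{|\xi_1|\,d\xi_1\,d\tau_1}{\langle\tau_1+\xi_1^2\rangle^{2b}\,\langle\tau-\tau_1+\xi_2^2\rangle^{2b}}\le C,\qquad \xi_2=\xi-\xi_1 .
\]

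To estimate $A$ I would take $b=\tfrac12+\mu$ with $\mu>0$ small (which is what the contraction scheme of Section 4 actually supplies), integrate first in $\tau_1$ via \eqref{conta1} of Lemma \ref{l2.2} --- which gives a factor $\lesssim\langle\tau+\xi_1^2+\xi_2^2\rangle^{-1-2\mu}$ --- and then change variables $\xi_1\mapsto\varrho:=\xi_1^2+\xi_2^2=\xi_1^2+(\xi-\xi_1)^2$. Here the hypothesis $|\xi_2|\ll|\xi_1|$ is decisive: it forces $\xi_1\approx\xi$, in particular $\xi_1$ stays far from $\xi/2$, so $\varrho$ is strictly monotone in $\xi_1$ on the relevant set and $\bigl|d\varrho/d\xi_1\bigr|=|2\xi_1-2\xi_2|\gtrsim|\xi_1|$; hence $d\xi_1\lesssim|\xi_1|^{-1}\,d\varrho$ and the factor $|\xi_1|$ coming from $D^{1/2}_x$ (after squaring in the Cauchy--Schwarz step) is exactly cancelled, leaving $A(\xi,\tau)\lesssim\int_{\mathbb R}\langle\tau+\varrho\rangle^{-1-2\mu}\,d\varrho<\infty$. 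For the variants in which $f$ and/or $g$ is replaced by its conjugate, the modulation weight $\langle\tau_i+\xi_i^2\rangle$ becomes $\langle\tau_i-\xi_i^2\rangle$; this only flips the sign of the corresponding square term in $\varrho$, and in each of the four cases one checks directly that $|d\varrho/d\xi_1|\gtrsim|\xi_1|$ still holds on $\{|\xi_2|\ll|\xi_1|\}$, so the same change of variables applies verbatim.

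The main obstacle is the genuine endpoint $b=\tfrac12$: there \eqref{conta1} is borderline (it produces a logarithm), so $A$ need not be bounded in $L^\infty$ and the argument above breaks. This is precisely where a sharp bilinear (refined) Strichartz estimate for $e^{it\partial_x^2}$ must be used: one establishes the free-evolution version $\bigl\|(e^{it\partial_x^2}f)(e^{it\partial_x^2}g)\bigr\|_{L^2_{x,t}}\lesssim|\xi_1|^{-1/2}\|f\|_{L^2}\|g\|_{L^2}$ under $|\xi_2|\ll|\xi_1|$ --- the same Jacobian computation as above, now with the $\langle\tau_i+\xi_i^2\rangle$ weights replaced by Dirac measures on the characteristic curves, so that no logarithm appears --- and then transfers it to the Bourgain scale (as in \cite{colliander-2001g}), the $D^{1/2}_x$ in the statement being matched against the $|\xi_1|^{-1/2}$ gain. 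Apart from this endpoint bookkeeping, the proof is routine and entirely parallel to those of Propositions \ref{p1}--\ref{p6}.
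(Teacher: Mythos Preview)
The paper does not give its own proof of this lemma; it is simply quoted from \cite{corcho-2009}, so there is no ``paper's proof'' to compare against. Your argument for $b=\tfrac12+\mu$ is correct and is the standard one: after the duality/Cauchy--Schwarz reduction, the frequency separation $|\xi_2|\ll|\xi_1|$ makes the Jacobian $|\partial\varrho/\partial\xi_1|=2|\xi_1-\xi_2|\sim|\xi_1|$ cancel the factor $|\xi_1|$ coming from $D_x^{1/2}$ (after squaring), and the four conjugate variants go through for exactly the reason you give, since in each case $\varrho$ equals $\pm\xi_1^2\pm\xi_2^2$ and $|\partial\varrho/\partial\xi_1|$ is either $2|\xi_1-\xi_2|$ or $2|\xi_1+\xi_2|=2|\xi|$, both comparable to $|\xi_1|$ on the support.

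Where you are a bit too optimistic is the genuine endpoint $b=\tfrac12$. The transference step you invoke (free bilinear Strichartz $\Rightarrow$ $X^{0,b}$ bilinear, ``as in \cite{colliander-2001g}'') works only for $b>\tfrac12$: it proceeds by writing $f=\int e^{i\lambda t}e^{it\partial_x^2}F_\lambda\,d\lambda$ and using Minkowski, which requires $\int\|F_\lambda\|_{L^2}\,d\lambda\lesssim\|f\|_{X^{0,b}}$, an estimate that fails at $b=\tfrac12$ (one gets only $\bigl(\int\langle\lambda\rangle\|F_\lambda\|_{L^2}^2\,d\lambda\bigr)^{1/2}$ there). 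So your endpoint sketch, as written, does not close. Reaching $b=\tfrac12$ on the nose would need an additional ingredient --- for instance a direct Plancherel computation using the full two-dimensional change of variables $(\xi_1,\tau_1)\mapsto(\lambda_1,\lambda_2)=(\tau_1+\xi_1^2,\tau_2+\xi_2^2)$ rather than the one-variable change after a $\tau_1$-integration, or an almost-orthogonality argument on dyadic modulation shells. That said, every application of this lemma in the present paper (notably Proposition~\ref{pp1}) uses only the $X^{0,\frac12+}$ version, for which your proof is complete.
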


\begin{remark}
	The lemma above is valid replacing $X^{0,1/2}$ by $X_a^{0,1/2}$.
\end{remark}

%

\subsection{Local well-posedness revisited}

Now, we take $N\gg 1 $  a sufficiently large integer and we denote by $I$ the operator $I := I_N^{-s}$ for a given $s \in \mathbb{R}$.

We have that the system (\ref{nls-g}) applied to the operator $I$ is given by 

\begin{equation}\label{sistema1}
	\left\lbrace \begin{array}{ll}
		i\partial_t Iu+p\partial^2_x Iu -\theta Iu+\ I\left(\overline{u} v\right) &=0  \\
		i\sigma\partial_t Iv+q\partial^2_x Iv -\alpha Iv+\frac{1}{2}I\left(u^2\right)&=0 
	\end{array} \right. .
\end{equation}

%
%

Let us state here a lemma that will be used to demonstrate the local well-posedness theorem, and then re-obtain the bilinear estimates.

\begin{lem}\label{chave1}
	Given $ -1/2< b'\leq b<1/2$, $s\in \mathbb{R}$, $a\geq 0$ and $0<T<1$, the estimate below 
	\begin{equation}
		\left\|\psi_T(t) u\right\|_{X_a^{s,b'}}\leq c T^{b-b'}\left\| u\right\|_{X_a^{s,b}}
	\end{equation} holds.
\end{lem}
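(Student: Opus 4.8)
The plan is to reduce the statement to a one–dimensional estimate in the time variable, since the spatial Fourier variable $\xi$ plays no role. Writing $\phi_a(\xi)=a\xi^2$ and using the definition \eqref{2.3} of the $X^{s,b}_a$ norm, we have $\|f\|_{X^{s,b}_a}=\|\langle\xi\rangle^s W_{\phi_a}(-t)f\|_{H^b_t L^2_x}$, and the operator $W_{\phi_a}(-t)$ commutes with multiplication by the time cutoff $\psi_T$. Hence it suffices to prove, for a fixed (suppressed) $L^2_x$-valued profile, the scalar statement
\begin{equation*}
\left\|\psi_T\, h\right\|_{H^{b'}_t}\leq c\,T^{\,b-b'}\left\|h\right\|_{H^b_t},\qquad -\tfrac12<b'\leq b<\tfrac12,\ 0<T<1,
\end{equation*}
with $c$ independent of $T$; applying this with $h=\langle\xi\rangle^s\widehat{f}$ (in the $\tau$ variable, for each $\xi$) and then taking $L^2_\xi$ gives the lemma.

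The scalar estimate is a standard fact about Bourgain-type cutoff multipliers, and I would prove it by interpolation. First I would treat the two endpoint-type cases. For $b'=b$ one shows $\|\psi_T h\|_{H^b_t}\le c\|h\|_{H^b_t}$ uniformly in $T\in(0,1)$ for $|b|<\tfrac12$: this follows from the fact that $\psi_T$ is a Mikhlin-type multiplier whose $H^b\to H^b$ norm is bounded uniformly for $0<T<1$ when $|b|<1/2$ (one writes $\widehat{\psi_T}(\tau)=T\widehat\psi(T\tau)$ and checks $\int \langle\tau-\tau'\rangle^{|b|}|T\widehat\psi(T(\tau-\tau'))|\,d\tau'\lesssim 1$ using the rapid decay of $\widehat\psi$ and $T<1$; the restriction $|b|<1/2$ enters through fractional-Leibniz/commutator considerations, exactly as in Lemma \ref{l2.1}). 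Next, for the case $b=0$ and $-\tfrac12<b'\le 0$ I would prove $\|\psi_T h\|_{H^{b'}_t}\le cT^{-b'}\|h\|_{L^2_t}$; since $-b'\ge 0$ this gains a power of $T$, and it is obtained by duality from $\|\psi_T g\|_{L^2_t}\le cT^{-b'}\|g\|_{H^{-b'}_t}$ wait—more directly, $\|\psi_T h\|_{H^{b'}}\le \|\psi_T h\|_{L^2}\le\|h\|_{L^2}$ is too crude; instead one uses $\langle\tau\rangle^{b'}\le 1$ is false in general for $b'\le0$ only when $|\tau|\le 1$, so one splits frequencies: on $|\tau|\lesssim 1/T$ one uses $\langle\tau\rangle^{b'}\lesssim 1$ and Hölder in time on the support of $\psi_T$ (length $\sim T$) to produce $T^{-b'}$ after Plancherel, and on $|\tau|\gtrsim 1/T$ one uses the decay of $\widehat{\psi_T}$.

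Having the two building blocks $\|\psi_T h\|_{H^b_t}\lesssim\|h\|_{H^b_t}$ (with $|b|<1/2$) and $\|\psi_T h\|_{H^{b'}_t}\lesssim T^{b-b'}\|h\|_{H^{b}_t}$ in the special configurations above, I would obtain the general range $-\tfrac12<b'\le b<\tfrac12$ by complex interpolation in the pair $(b',b)$, or equivalently by first reducing to $b=0$ via the substitution $h\mapsto \langle\partial_t\rangle^{b} h$ (which is legitimate since $\langle\partial_t\rangle^b$ commutes with nothing problematic and only relabels the norms) and then interpolating between $b'=b=0$ and $b'=-\tfrac12+\varepsilon$. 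The only genuinely delicate point—and the one I expect to be the main obstacle—is tracking uniformity of the constant in $T$ and the sharp role of the restriction $|b|,|b'|<1/2$: for $|b|\ge 1/2$ the multiplier $\psi_T$ is no longer bounded on $H^b$ uniformly in $T$, which is exactly why the statement is phrased with strict inequalities. All of this is essentially contained in, or a minor variant of, the proof of Lemma 2.1 of \cite{ginibre-1997} quoted above as Lemma \ref{l2.1}, so I would either cite it directly or reproduce the short time-localization argument; the spatial factor $\langle\xi\rangle^s$ and the parameter $a\ge 0$ are inert throughout.
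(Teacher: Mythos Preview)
The paper's own ``proof'' of this lemma is a one-line citation to \cite{ginibre-1997}, and you end your proposal by saying you would do the same; in that sense your proposal matches the paper exactly. Your sketch goes further than the paper by outlining the standard argument (reduce to a scalar estimate in the time variable, then combine the uniform $H^b\to H^b$ boundedness of multiplication by $\psi_T$ for $|b|<1/2$ with a frequency splitting at $|\tau|\sim 1/T$), and that overall strategy is correct.

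Two small rough spots in the sketch are worth flagging, even though they do not affect the conclusion. First, the substitution $h\mapsto \langle\partial_t\rangle^{b}h$ to ``reduce to $b=0$'' does not work as written: $\langle\partial_t\rangle^{b}$ does \emph{not} commute with multiplication by $\psi_T$, so this step would require exactly the kind of commutator/multiplier estimate you are trying to prove. Second, the visible self-correction (``wait---'') in the $b=0$ case signals that the duality/H\"older route needed more care; the clean way is precisely the high/low time-frequency split you describe afterwards, together with the uniform multiplier bound. Since the result is classical and both you and the paper ultimately defer to \cite{ginibre-1997}, these are cosmetic rather than substantive issues.
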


\begin{proof}
See	\cite{ginibre-1997}.
\end{proof}

\begin{lem} If $1/4<d$, for $b_1,b_2\in \mathbb{R}$ such that $(b_1,b_2)=\left(0,\frac12+\right)$ or $(b_1,b_2)=\left(\frac12+,0\right)$, then
	\begin{equation}\label{b1}
		\| \overline{u}\cdot v\|_{X^{0,-d}}\leq c \|u\|_{X^{0,b_1}}\cdot \|v\|_{X_a^{0,b_2}}.
	\end{equation}
\end{lem}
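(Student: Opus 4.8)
The plan is to reduce the estimate \eqref{b1} to the two functional bounds $J_1$, $J_2$, $J_3$ that already appeared in the proof of Proposition \ref{p1} (with $\kappa=s=0$ in the weights), exactly as in Corcho--Matheus. First I would set up the duality argument: writing $f(\xi_1,\tau_1)=\langle\tau_1-\xi_1^2\rangle^{b_1}\widehat{\overline u}(\xi_1,\tau_1)$ and $g(\xi_2,\tau_2)=\langle\tau_2+a\xi_2^2\rangle^{b_2}\widehat v(\xi_2,\tau_2)$, the claim is equivalent to
\begin{equation*}
	|W(f,g,\varphi)|=\left|\int_{\mathbb{R}^4}\frac{f(\xi_1,\tau_1)g(\xi_2,\tau_2)\varphi(\xi,\tau)}{\langle\omega\rangle^{d}\langle\omega_1\rangle^{b_1}\langle\omega_2\rangle^{b_2}}\,d\xi_2\,d\tau_2\,d\xi\,d\tau\right|\leq c\,\|f\|_{L^2}\|g\|_{L^2}\|\varphi\|_{L^2},
\end{equation*}
with the notation \eqref{3.2}. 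By symmetry it suffices to treat $(b_1,b_2)=(\tfrac12+,0)$; the other case is obtained by interchanging the roles of the first and second factors. Applying Cauchy--Schwarz in $(\xi,\tau)$ and then in the remaining variables, the bound follows once I show that
\begin{equation*}
	\sup_{\xi,\tau}\ \frac{1}{\langle\omega\rangle^{2d}}\int_{\mathbb{R}}\frac{d\xi_2}{\langle\tau-a\xi_2^2-2\xi\xi_2+\xi^2\rangle^{2b_1}}<\infty
\end{equation*}
in the region where $\langle\omega_2\rangle$ is harmless (i.e. $b_2=0$ contributes nothing), and the companion supremum with the roles of the $\omega$'s permuted when $|\omega_1|$ or $|\omega_2|$ dominates.

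The key point that makes the $\kappa=s=0$ case \emph{easier} than Proposition \ref{p1} is that the troublesome polynomial weights $\langle\xi_2\rangle^{-2s+2|\kappa|}$ are now identically $1$, so no frequency-localization/dispersion analysis is needed to absorb them. Concretely, when $|\omega|=\max\{|\omega|,|\omega_1|,|\omega_2|\}$ I bound the inner integral in $\xi_2$ directly: the phase $\tau-a\xi_2^2-2\xi\xi_2+\xi^2$ is a quadratic in $\xi_2$ with leading coefficient $-a\neq 0$, so by \eqref{conta2} of Lemma \ref{l2.2} the integral $\int_{\mathbb R}\langle\tau-a\xi_2^2-2\xi\xi_2+\xi^2\rangle^{-2b_1}d\xi_2$ is bounded by a constant, using $2b_1>1$. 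Hence $J_1\lesssim \langle\omega\rangle^{-2d}\lesssim 1$. When instead $|\omega_1|$ or $|\omega_2|$ is the maximum, I perform the analogous computation after a change of variables: setting $\eta$ equal to the quadratic phase in the remaining integration variable, $d\eta$ is comparable to a linear factor, and the resulting integral $\int \langle\eta\rangle^{-2d}\,\langle\cdot\rangle^{-2b_1}$ converges because $2b_1>1$ (and the prefactor $\langle\omega_j\rangle^{-2b_1}$ times $\langle\omega_j\rangle^{2d}$ from enlarging the domain of $\eta$ stays bounded, since $d<b_1$). In the case $(b_1,b_2)=(0,\tfrac12+)$ one uses $2b_2>1$ in place of $2b_1>1$; the condition $d>1/4$ is only needed to keep $d$ in the admissible window $1/4<d<1/2$ consistent with $b=1/2+$.

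The main (and essentially only) obstacle is bookkeeping: one must split $\mathbb R^4$ according to which of $|\omega|,|\omega_1|,|\omega_2|$ is largest and verify that in each piece either \eqref{conta2} applies directly (leading coefficient $a$, $1$, or $a$ in the relevant variable is nonzero — note $a=1/\sigma>0$ always) or a change of variables reduces to a one-dimensional integral controlled by $2b_i>1$ and $d<1/2$. Since there are no $\xi$-weights to track, none of the delicate region decompositions of Propositions \ref{p1}--\ref{p2} recur here, and no lower bound on the resonance function $|\omega-\omega_1-\omega_2|$ is required. The remark following the lemma (that $X^{0,1/2}$ may be replaced by $X_a^{0,1/2}$) is automatic since the argument never used the specific value of the coefficient in $\phi_a(\xi)=a\xi^2$ beyond $a>0$.
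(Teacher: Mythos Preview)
There is a genuine gap in the reduction step. For $(b_1,b_2)=(\tfrac12+,0)$ you propose to apply Cauchy--Schwarz with the supremum taken over $(\xi,\tau)$ (the ``$J_1$'' ordering). But with $b_2=0$ the weight $\langle\omega_2\rangle^{-2b_2}$ is identically $1$, so the quantity you actually have to bound in that ordering is
\[
\sup_{\xi,\tau}\ \frac{1}{\langle\omega\rangle^{2d}}\int_{\mathbb{R}^2}\frac{d\xi_2\,d\tau_2}{\langle\omega_1\rangle^{2b_1}}.
\]
Here $\omega_1=\tau-\tau_2-(\xi-\xi_2)^2$ depends on $\tau_2$, so the $\tau_2$-integral of $\langle\omega_1\rangle^{-2b_1}$ is a finite constant independent of $\xi_2$; the remaining $\xi_2$-integral then diverges. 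The one–dimensional integral you display, with denominator $\langle\tau-a\xi_2^2-2\xi\xi_2+\xi^2\rangle^{2b_1}$, is (up to the sign typo in the $\xi_2^2$ coefficient) exactly what \eqref{conta1} produces in the $\tau_2$-integration of Proposition~\ref{p1}, but that step used \emph{both} modulation weights, i.e.\ it required $b_2>0$. With $b_2=0$ there is no second factor to feed into \eqref{conta1}, and the expression you wrote does not arise. The same obstruction hits the $J_3$ ordering (sup over $(\xi_1,\tau_1)$): the inner integral is $\int\langle\omega\rangle^{-2d}\,d\xi_2\,d\tau_2$, and the $\tau_2$-integral already needs $2d>1$, which is not assumed.

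The paper's argument avoids this entirely and is in fact shorter than what you sketch: no region decomposition at all. One always places the supremum over the variable carrying the \emph{zero} modulation exponent. For $(b_1,b_2)=(\tfrac12+,0)$ this is the $J_2$ ordering:
\[
\sup_{\xi_2,\tau_2}\int_{\mathbb{R}^2}\frac{d\xi\,d\tau}{\langle\tau+\xi^2\rangle^{2d}\,\langle\tau_1-\xi_1^2\rangle^{2b_1}},
\]
and now both factors decay. Integrating first in $\tau$ with \eqref{conta1} (using $2b_1>1$) gives $\langle\tau_2+2\xi^2-2\xi\xi_2+\xi_2^2\rangle^{-2d}$, and then \eqref{conta2} in $\xi$ (quadratic, $2d>1/2$) finishes it. The case $(b_1,b_2)=(0,\tfrac12+)$ is symmetric, taking the supremum over $(\xi_1,\tau_1)$. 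So the moral is: the ``harmless'' weight $\langle\omega_2\rangle^0$ is precisely the dangerous one to integrate against, and the correct Cauchy--Schwarz ordering is dictated by which $b_i$ vanishes, not by which $|\omega_j|$ is largest.
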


\begin{proof}
	Without loss of generality, let us prove only the case $b_2=0$ and $b_1=\frac12+.$
	Following the ideas from Proposition \ref{p1}, it follows that
	
	\begin{eqnarray*}
		\| \overline{u}\cdot v\|_{X^{0,-d}}\leq \left\| u \right\|_{X^{0,b_1}}\left\|  v \right\|_{X^{0,b_1}} \left\| \frac{1}{\langle\tau_2+a\xi_2^2\rangle^{2b_2}}  \displaystyle\int_{\mathbb{R}^2}\frac{1}{\langle\tau_1-\xi_1^2\rangle^{2b_1}\langle\tau+\xi^2\rangle^{2d}}d\xi d\tau\right\|_{L^{\infty}_{\xi_2,\tau_2}}.
	\end{eqnarray*}
	
	On the right hand side of the inequality above, using the Lemma \ref{conta1} and the Lemma \ref{conta2}, we have that
	
	\begin{equation*}
		\displaystyle\int_{\mathbb{R}^2}\frac{1}{\langle\tau_1-\xi_1^2\rangle^{2b_1}\langle\tau+\xi^2\rangle^{2d}}d\xi d\tau\leq \displaystyle\int_{\mathbb{R}^2}\frac{1}{\langle\tau_2+2\xi^2+\xi_2^2-2\xi\xi_2\rangle^{2d}}d\xi d\tau\leq c.
	\end{equation*}
	
\end{proof}

Analogously, we prove the lemma below.
\begin{lem}\label{b2}
	Consider $1/4<d$. Given $b_1,b_2\in \mathbb{R}$ such that $(b_1,b_2)=\left(0,\frac12+\right)$ or $(b_1,b_2)=\left(\frac12+,0\right)$. Then
	\begin{equation}
		\| uw\|_{X^{0,-d}_a}\leq c \|u\|_{X^{0,b_1}}\cdot \|w\|_{X^{0,b_2}}.
	\end{equation}
\end{lem}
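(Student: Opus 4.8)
The plan is to mirror the proof of the preceding lemma, which itself follows the scheme of Proposition \ref{p1}: after expanding the square of the $X^{0,-d}_a$-norm as a frequency integral, a single Cauchy--Schwarz inequality reduces the estimate to an $L^\infty$ bound for an explicit double integral, which is then settled with Lemma \ref{conta1} and Lemma \ref{conta2}. By the symmetry between the two factors it suffices to treat $(b_1,b_2)=(\tfrac12+,0)$; the case $(b_1,b_2)=(0,\tfrac12+)$ follows by exchanging the roles of $u$ and $w$ (and of the frequency variable placed in $L^\infty$), and the variants with $u$ and/or $w$ conjugated are identical.

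Concretely, I would set $f(\xi_1,\tau_1)=\langle\tau_1+\xi_1^2\rangle^{b_1}\widehat{u}(\xi_1,\tau_1)$ and $g=\widehat{w}$, so that $\|f\|_{L^2}=\|u\|_{X^{0,b_1}}$ and $\|g\|_{L^2}=\|w\|_{X^{0,b_2}}$ (recall $b_2=0$). Writing $\widehat{uw}=\widehat{u}\ast\widehat{w}$, a Cauchy--Schwarz inequality in $(\xi_1,\tau_1)$ pairing $f$ against $\langle\tau_1+\xi_1^2\rangle^{-b_1}g(\xi-\xi_1,\tau-\tau_1)$, followed by Fubini's theorem, gives
\[
\|uw\|_{X^{0,-d}_a}^{2}\;\leq\;\|u\|_{X^{0,b_1}}^{2}\,\|w\|_{X^{0,b_2}}^{2}\;\sup_{\xi_2,\tau_2}\;\int_{\mathbb{R}^2}\frac{d\xi\,d\tau}{\langle\tau-\tau_2+(\xi-\xi_2)^2\rangle^{2b_1}\,\langle\tau+a\xi^2\rangle^{2d}},
\]
which is the exact analogue of the display in the proof of the previous lemma, now carrying the dispersion weight $\langle\tau+a\xi^2\rangle$ of the target space $X^{0,-d}_a$ in place of $\langle\tau+\xi^2\rangle$.

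It then remains to bound the inner integral uniformly in $(\xi_2,\tau_2)$. Since $2b_1>1$ and $2d>\tfrac12$, integrating in $\tau$ and applying Lemma \ref{conta1} (with $r=2d$) bounds it by a constant times $\int_{\mathbb{R}}\langle\tau_2-(\xi-\xi_2)^2+a\xi^2\rangle^{-2d}\,d\xi$; since $a\xi^2-(\xi-\xi_2)^2=(a-1)\xi^2+2\xi_2\xi-\xi_2^2$, this integral has the form $\int_{\mathbb{R}}\langle\alpha_0+\alpha_1\xi+(a-1)\xi^2\rangle^{-2d}\,d\xi$, and after rescaling $\xi$ by $\sqrt{|a-1|}$ it is controlled by Lemma \ref{conta2} because $2d>\tfrac12$, with a bound independent of $(\xi_2,\tau_2)$ (though depending on $a$). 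This completes the argument. The one place where care is needed is precisely the non-degeneracy $a\neq1$ (that is, $\sigma\neq1$): if $a=1$ the coefficient of $\xi^2$ above vanishes and $\int_{\mathbb{R}}\langle\alpha_0+\alpha_1\xi\rangle^{-2d}\,d\xi$ diverges for $d<\tfrac12$, so that borderline value of $\sigma$ would instead require a low-frequency bilinear argument of the type of Lemma \ref{chave}; this does not occur here, since the global theory is applied with $\sigma\neq1$. Apart from this the computation is routine, and the hypotheses $b_1>\tfrac12$ and $d>\tfrac14$ in the statement are exactly what make Lemma \ref{conta1} and Lemma \ref{conta2}, respectively, applicable.
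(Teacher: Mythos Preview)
Your proposal is correct and follows essentially the same route as the paper: the paper simply says the lemma is proved ``analogously'' to the preceding one, and your argument is precisely that analogue---Cauchy--Schwarz to reduce to an $L^\infty$ bound on the double integral, then Lemma~\ref{conta1} in $\tau$ followed by Lemma~\ref{conta2} in $\xi$. Your observation about the degenerate case $a=1$ is a genuine subtlety that the paper's Remark (``independent of the value of $a>0$'') passes over; since the global theory only uses this lemma for $\sigma\neq 1$, your caveat is appropriate and does not affect the application.
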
	

\begin{remark}
	The above results are independent of the value of $a >0$.
\end{remark}

Now let us revisit the fixed-point theorem to find the best exponent for $\delta$.

\begin{prop}\label{local}
	For all $(u_0,v_0)\in H^s\times H^s$ and $s\geq -\frac{1}{4}$ and $0<a<\frac{1}{2}$ or $s\geq - \frac{1}{2}$ and $a>\frac{1}{2}$ the system  (\ref{sistema1}) has a unique local-in-time solution $(u(t),v(t))$ defined on the
	time interval $[0,\delta]$ for some $\delta\leq 1$ satisfying
	\begin{equation}
		\delta \sim \left(\left\|Iu_0 \right\|_{L^2_x} +\left\| Iv_0\right\|_{L^2_x} \right)^{-\frac{4}{3}+}.
	\end{equation}	
	
	Furthermore, $\left\|Iu_0 \right\|_{X^{0,1/2+}}+  \left\|Iv_0 \right\|_{X^{0,1/2+}_a} \leq c\left( \left\|Iu_0 \right\|_{L^2}+\left\|Iv_0 \right\|_{L^2}\right).$ 
\end{prop}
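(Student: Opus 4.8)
The plan is to set up the standard Bourgain-space fixed-point argument for the $I$-regularized system \eqref{sistema1}, exactly as in the local theory of Section 4, but tracking carefully the power of $\delta$ that comes from the time-localization factor. First I would recast \eqref{sistema1} in Duhamel form: write $Iu(t) = \psi_1(t)e^{it\partial_x^2}Iu_0 - i\psi_\delta(t)\int_0^t e^{i(t-t')\partial_x^2}\{\theta Iu - I(\overline u v)\}\,dt'$ and the analogous expression for $Iv$ with the group $e^{ia t\partial_x^2}$ and nonlinearity $\tfrac{a}{2}I(u^2)$. Here I use that $I$ commutes with the linear propagators and with $\partial_x$ (Lemma \ref{aa}) and that $\overline{Iu} = I\overline u$, so the regularized nonlinearities can be written as $I(\overline u\cdot v)$ and $I(u^2)$. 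The fixed point will be carried out in the closed ball of $X^{0,\frac12+}\times X_a^{0,\frac12+}$ of radius comparable to $R := \|Iu_0\|_{L^2}+\|Iv_0\|_{L^2}$.

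Next I would apply Lemma \ref{l2.1} (with $b = \tfrac12+\mu$, $b' = -d$, $d = \tfrac12-2\mu$ so that $1+b'-b = \tfrac12 - 3\mu = \tfrac34-$ after optimizing, which is where the exponent $-\tfrac43+$ will originate) together with Lemma \ref{chave1} for the linear term $\theta Iu$, to bound the Duhamel operator by
\begin{equation*}
\|Iu\|_{X^{0,\frac12+}} \leq c\,\|Iu_0\|_{L^2} + c\,\delta^{\frac34-}\bigl(\theta\|Iu\|_{X^{0,\frac12+}} + \|I(\overline u v)\|_{X^{0,-d}}\bigr),
\end{equation*}
and similarly for $Iv$. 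The crucial point is the nonlinear estimate: since $s\geq -\tfrac14$ (for $0<a<\tfrac12$) or $s\geq-\tfrac12$ (for $a>\tfrac12$), the pair $(\kappa,s) = (s,s)$ lies in the region $\mathcal{W}_\sigma$, so Propositions \ref{p1}, \ref{p2} (respectively \ref{p3}, \ref{p4}) give the bilinear bounds at level $s$; applying these with the operator $I$ inserted, and then invoking the interpolation-type Lemma 12.1 of \cite{colliander-2004} to pass from the $I_1$ (i.e. derivative) version of the bilinear estimate to the $I_N$ version, yields
\begin{equation*}
\|I(\overline u v)\|_{X^{0,-d}} \leq c\,\|Iu\|_{X^{0,\frac12+}}\|Iv\|_{X_a^{0,\frac12+}}, \qquad \|I(u^2)\|_{X_a^{0,-d}} \leq c\,\|Iu\|_{X^{0,\frac12+}}^2.
\end{equation*}
Feeding these back in, the fixed-point map sends the ball of radius $cR$ into itself and is a contraction provided $\delta^{\frac34-}\,R \lesssim 1$, i.e. $\delta \sim R^{-\frac43+}$, which is the claimed lifespan. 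The last assertion, $\|Iu_0\|_{X^{0,\frac12+}}+\|Iv_0\|_{X_a^{0,\frac12+}_a}\leq c(\|Iu_0\|_{L^2}+\|Iv_0\|_{L^2})$, is immediate from \eqref{2.4} of Lemma \ref{l2.1} applied to the linear evolutions $\psi_1(t)e^{it\partial_x^2}Iu_0$ and $\psi_1(t)e^{iat\partial_x^2}Iv_0$.

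The main obstacle I anticipate is not the abstract contraction scheme but making the $I$-modified bilinear estimates genuinely uniform in $N$: one must verify that the bilinear estimates of Propositions \ref{p1}--\ref{p4}, which were proved for the bare multipliers $\langle\xi\rangle^s$, survive when each factor carries the multiplier $m(\xi)$, and that the constant does not degrade as $N\to\infty$. This is exactly the content of Lemma 12.1 of \cite{colliander-2004}, but applying it requires rewriting the estimates in the ``$I_1^\alpha$'' homogeneous-derivative form with $\alpha = -s \in [0,\tfrac14]$ or $[0,\tfrac12]$, checking the hypotheses (translation invariance of the spaces and the operator, which holds for $X^{s,b}$-type spaces), and then specializing. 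A secondary technical point is bookkeeping the $\mu$'s: one needs $d>\tfrac14$ and $b<\tfrac34$ simultaneously with $b = \tfrac12+\mu$, $d = \tfrac12-2\mu$, which forces $0<\mu<\tfrac18$ and gives the time power $1+b'-b = \tfrac34 - 3\mu$, hence the ``$\tfrac34-$'' in the exponent and the ``$-\tfrac43+$'' in $\delta$; I would simply record this dependence rather than optimize it explicitly.
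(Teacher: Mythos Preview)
Your contraction scheme has a genuine gap in the time-exponent bookkeeping, and as written it cannot produce the lifespan $\delta\sim R^{-4/3+}$. First, the arithmetic: with $b=\tfrac12+\mu$ and $b'=-d=-(\tfrac12-2\mu)$ one has
\[
1+b'-b \;=\; 1-\bigl(\tfrac12-2\mu\bigr)-\bigl(\tfrac12+\mu\bigr)\;=\;\mu,
\]
not $\tfrac12-3\mu$ or $\tfrac34-3\mu$. More importantly, for \emph{any} admissible choice satisfying $d>\tfrac14$ and $b>\tfrac12$ the Duhamel gain $1-d-b$ from Lemma~\ref{l2.1} is strictly less than $\tfrac14$. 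Hence a fixed-point argument based only on Propositions~\ref{p1}--\ref{p4} (both factors in $X^{0,1/2+}$) yields at best a factor $\delta^{1/4-}$ in front of the quadratic terms; the contraction condition becomes $\delta^{1/4-}R\lesssim 1$, i.e.\ $\delta\sim R^{-4-}$, which is far too short for the $I$-method iteration that follows.

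The paper's proof (which cites Lemmas~\ref{aa}--\ref{b2} and Proposition~5.5 of \cite{corcho-2009}) recovers the missing half-power of $\delta$ by a different route. Instead of Propositions~\ref{p1}--\ref{p4} it uses the \emph{asymmetric} bilinear estimates established just before Proposition~\ref{local}, namely \eqref{b1} and Lemma~\ref{b2}, in which one of the two factors is measured in $X^{0,0}$ rather than $X^{0,1/2+}$. One then inserts the cutoff $\psi_\delta$ on that factor and invokes Lemma~\ref{chave1} (which requires the upper index to be below $\tfrac12$, so it is unavailable in your setup) to gain an extra $\delta^{1/2-}$ when passing from $X^{0,0}$ back up to $X^{0,1/2-}$. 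Combined with the $\delta^{1/4-}$ from Lemma~\ref{l2.1} (now with $d$ taken just above $\tfrac14$), this produces the total factor $\delta^{3/4-}$ and hence $\delta\sim R^{-4/3+}$. Your use of Lemma~12.1 of \cite{colliander-2004} to make the bilinear estimates uniform in $N$ is correct and is exactly what the paper intends; the only defect is that you need \eqref{b1}, Lemma~\ref{b2} and Lemma~\ref{chave1} in place of Propositions~\ref{p1}--\ref{p4} to reach the stated exponent.
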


\begin{proof}
	Using the Lemmas\ref{aa}-\ref{b2} the proof follows in a similar to the Proposition 5.5 of \cite{corcho-2009}.
\end{proof}

\subsection{Almost conservation of the modified energy}

Let us consider the energy $E$ associated with the system(\ref{sistema1})
\begin{equation}\label{energia}
	E(Iu,Iv)=\left\|Iu \right\|_{L^2}^2+2\sigma\left\| Iv\right\|_{L^2}^2.
\end{equation}

\begin{teo} The functional energy (\ref{energia}) was derived with respect to the time given by:
	$$	\dfrac{d}{dt}E(Iu, Iv)= 2 \mbox{Im} \left\lbrace \displaystyle\int \left(I(\overline{u}v)-I\overline{u}Iv)\right) I\overline{u}dx\right\rbrace+2 \mbox{Im} \left\lbrace \displaystyle\int \left(I(u^2)-(Iu)^2\right) I\overline{v}dx\right\rbrace.$$
\end{teo}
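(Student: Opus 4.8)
The plan is to prove the identity by differentiating the modified energy directly in time, exploiting that $I=I_N^{-s}$ is a Fourier multiplier which, by Lemma \ref{aa}, commutes with $\partial_t$ and $\partial_x^2$ and satisfies $\overline{Iu}=I\overline{u}$; applying $I$ to the system therefore produces exactly (\ref{sistema1}). First I would solve the two equations of (\ref{sistema1}) for the time derivatives,
\begin{equation*}
\partial_t(Iu)=i\bigl(p\,\partial_x^2 Iu-\theta\,Iu\bigr)+i\,I(\overline{u}v),\qquad
\partial_t(Iv)=\tfrac{i}{\sigma}\bigl(q\,\partial_x^2 Iv-\alpha\,Iv\bigr)+\tfrac{i}{2\sigma}\,I(u^2),
\end{equation*}
and then, writing $E(Iu,Iv)=\langle Iu,Iu\rangle_{L^2_x}+2\sigma\langle Iv,Iv\rangle_{L^2_x}$, I would use
\begin{equation*}
\frac{d}{dt}E(Iu,Iv)=2\,\mbox{Re}\,\langle\partial_t Iu,Iu\rangle+4\sigma\,\mbox{Re}\,\langle\partial_t Iv,Iv\rangle .
\end{equation*}

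The second step is to eliminate the linear contributions. Substituting the formulas above, the pieces coming from $i\bigl(p\,\partial_x^2 Iu-\theta\,Iu\bigr)$ pair against $Iu$ to give $i$ times a real number, since $\langle\partial_x^2 Iu,Iu\rangle=-\|\partial_x Iu\|_{L^2}^2$ and $\langle Iu,Iu\rangle=\|Iu\|_{L^2}^2$ are real and $p,\theta\in\mathbb{R}$; as $\mbox{Re}(iz)=0$ whenever $z$ is real, these terms disappear, and the same happens for the $v$-equation. What survives are only the two nonlinear contributions, and their prefactors ($2$ from the $u$-term and $4\sigma\cdot\tfrac{1}{2\sigma}=2$ from the $v$-term) coincide, so that $\frac{d}{dt}E(Iu,Iv)$ reduces to an expression built solely from $\mbox{Im}\!\int I(\overline{u}v)\,I\overline{u}\,dx$ and $\mbox{Im}\!\int I(u^2)\,I\overline{v}\,dx$.

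Finally I would insert the resonant terms that appear in the genuine conserved energy: writing $I(\overline{u}v)=\bigl(I(\overline{u}v)-I\overline{u}\,Iv\bigr)+I\overline{u}\,Iv$ and $I(u^2)=\bigl(I(u^2)-(Iu)^2\bigr)+(Iu)^2$, the two ``error-free'' remainder integrals $\int (I\overline{u})^2\,Iv\,dx$ and $\int (Iu)^2\,I\overline{v}\,dx$ turn out to be complex conjugates of each other — indeed $\overline{(I\overline{u})^2\,Iv}=(Iu)^2\,I\overline{v}$ thanks to $\overline{Iu}=I\overline{u}$ — so their imaginary parts are opposite and cancel. This is precisely the algebraic identity that makes $E(u,v)$ a conservation law for (\ref{nls-g}); what remains after the cancellation is the claimed formula. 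I expect the only delicate point to be the careful bookkeeping of signs through the imaginary parts together with this conjugation-symmetry cancellation of the two bilinear remainders; the rest is a routine integration by parts.
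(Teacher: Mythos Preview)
Your proposal is correct and follows essentially the same route as the paper: differentiate $E(Iu,Iv)$ in time, substitute the equations of (\ref{sistema1}), kill the linear contributions via the integration-by-parts identity $\int \overline{f}\,\partial_x^2 f = -\int|\partial_x f|^2$, and then observe that the product terms $\int (I\overline{u})^2 Iv\,dx$ and $\int (Iu)^2 I\overline{v}\,dx$ are conjugate so their imaginary parts cancel. The paper's own proof is considerably more terse (it jumps directly from the four-term expansion of $\tfrac{d}{dt}E$ to the final formula), so your explicit bookkeeping of the resonant-term cancellation is a welcome addition rather than a deviation.
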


\begin{proof}
	Also using the following fact $\displaystyle\int  \overline{f} \cdot\partial^2_x f= \displaystyle\int  \left|\partial_x f\right|^2$, we get:
	$$
	\begin{array}{ll}
	\dfrac{d}{dt}E(Iu, Iv)&= \displaystyle\int \partial_t Iu \cdot I\overline{u}+\displaystyle\int  Iu \cdot\partial_t I\overline{u}+2\sigma\displaystyle\int \partial_t Iv \cdot I\overline{v}+2\sigma\displaystyle\int  Iv \cdot\partial_t I\overline{v} \\
	& =-2\mbox{Im}\left\lbrace \displaystyle\int \left(I(\overline{u}v)-I\overline{u}Iv\right)\cdot I\overline{u}\right\rbrace+2\mbox{Im}\left\lbrace \displaystyle\int  \left( I(\overline{u}^2) -\left(I\overline{u}\right)^2\right)\cdot Iv  \right\rbrace.\\
	\end{array}
	$$
\end{proof}

From now on $\delta= \left( \left\|Iu\right\|_{L^2}+\left\|Iv\right\|_{L^2} \right)^{-4/3}$. Let us now estimate the modified energy. Using the fundamental theorem of calculus we have

\begin{eqnarray*}
	E(Iu,Iv)(\delta)-E(Iu,Iv)(0)&=&2 \mbox{Im} \displaystyle\int_0^{\delta}\left( \displaystyle\int \left(I(\overline{u}v)-I\overline{u}Iv)\right) I\overline{u}dx\right)dt\\
	&=&2 \mbox{Im} \displaystyle\int_0^{\delta} \bigg\langle \left(I(\overline{u}v)-I\overline{u}Iv\right)^{\wedge};\  \widehat{Iu}\bigg\rangle_{L^2}dt\\
	& &+2 \mbox{Im}\displaystyle\int_0^{\delta} \bigg\langle  \left(I(u^2)-(Iu)^2\right)^{\wedge}; \ \widehat{Iv}\bigg\rangle_{L^2}dt.
\end{eqnarray*}
Observe  that 
\begin{eqnarray*}
	\left(I(\overline{u}v)-I\overline{u}Iv\right)^{\wedge}&=& m(\xi)\widehat{\overline{u}\cdot v} -\widehat{I\overline{u}} \ast \widehat{Iv}\\
	&=&\displaystyle\int \widehat{I\overline{u}}(\xi_1)\ \widehat{Iv}(\xi_2)\left(\dfrac{m(\xi)-m(\xi_1)m(\xi_2) }{m(\xi_1)m(\xi_2)}\right)d\xi_1
\end{eqnarray*}
and
\begin{eqnarray*}
	\left(I(u^2)-(Iu)^2\right)^{\wedge}&=& m(\xi)\widehat{ u^2} -\widehat{Iu} \ast \widehat{Iu}\\
	&=&\displaystyle\int \widehat{Iu}(\xi_1)\ \widehat{Iu}(\xi_2)\left(\dfrac{m(\xi)-m(\xi_1)m(\xi_2) }{m(\xi_1)m(\xi_2)}\right)d\xi_1.
\end{eqnarray*}
Therefore,

\begin{eqnarray*}
	\displaystyle\int_0^{\delta} \bigg\langle \left(I(\overline{u}v)-I\overline{u}Iv\right)^{\wedge};\  \widehat{Iu}\bigg\rangle_{L^2}dt = 
	\displaystyle\int_0^{\delta}\displaystyle\int_{\mathbb{R}_{\xi}}\displaystyle\int_{\mathbb{R}_{\xi_1}}\widehat{I\overline{u}}(\xi_1)\ \widehat{Iv}(\xi_2)\ \widehat{I\overline{u}}(\xi)M(\xi,\xi_1)d\xi_1\ d\xi \ dt,
\end{eqnarray*}
analogously we have that

\begin{eqnarray*}
	\displaystyle\int_0^{\delta} \bigg\langle  \left(I(u^2)-(Iu)^2\right)^{\wedge}; \ \widehat{Iv}\bigg\rangle_{L^2}dt = 
	\displaystyle\int_0^{\delta}\displaystyle\int_{\mathbb{R}_{\xi}}\displaystyle\int_{\mathbb{R}_{\xi_1}}\widehat{Iu}(\xi_1)\ \widehat{Iu}(\xi_2)\ \widehat{I\overline{v}}(\xi)M(\xi,\xi_1)d\xi_1\ d\xi \ dt,
\end{eqnarray*}
where $M(\xi,\xi_1)=\left(\dfrac{m(\xi)-m(\xi_1)m(\xi_2) }{m(\xi_1)m(\xi_2)}\right)$.

\vspace{0.5cm}

We note that fixed $N>1$, $|\xi_1| \sim N_1$ and $|\xi_2| \sim N_2$, we have:
\begin{itemize}
	\item[(i)] If $2|\xi_1|\leq  |\xi_2|$ and $2|\xi_1|\leq  N$ then $|M(\xi,\xi_1)| \lesssim \frac{N_1}{N_2}$;
	\item[(ii)] If $2|\xi_2|\leq |\xi_1|$ and $2|\xi_2|\leq N$ then $|M(\xi,\xi_1)| \lesssim \frac{N_2}{N_1}$;
	\item[(iii)] If $2|\xi_1|\leq |\xi_2|$ and $|\xi_1|\geq 2N$ then $|M(\xi,\xi_1)| \lesssim \frac{N_1}{N}$;
	\item[(iv)] If $2|\xi_2|\leq |\xi_1|$ and $|\xi_2|\geq  2N$ then $|M(\xi,\xi_1)| \lesssim \frac{N_2}{N}$ and 
	\item[(v)] If $|\xi_1|\sim |\xi_2|\gtrsim N$ then $|M(\xi,\xi_1)| \lesssim \left(\frac{N_1}{N}\right)^{2}$.
\end{itemize}

By the symmetry of the variables it is sufficient to verify only the statements (i), (iii) and (v).

We will use the fact that $m'(\xi)=-N|\xi|^{-2}$.

In the first case, as  $|\xi_1|\ll N$, we get $m(\xi_1)=1$, hence
\begin{equation*}
	\left|M(\xi,\xi_1)\right|=\left|\dfrac{m(\xi_1+\xi_2)-m(\xi_2)}{m(\xi_2)}\right|\sim \left|\dfrac{m'(\xi_2)|\xi_1|}{m(\xi_2)}\right|\lesssim \dfrac{N_1}{N_2}.
\end{equation*}

Still, to verify the item (iii) we observe that $\frac12 |\xi_2|\leq |\xi_1+\xi_2|\leq 2|\xi_2|$ and thereby,

\begin{eqnarray*}
	\dfrac{m(\xi_1+\xi_2)-m(\xi_1)m(\xi_2)}{m(\xi_2)}&&= \dfrac{N|\xi_1+\xi_2|^{-1}-N|\xi_2|^{-1}N|\xi_1|^{-1}}{N|\xi_2|^{-1}}\\
	&& =\dfrac{|\xi_2|}{|\xi_1+\xi_2|}-\dfrac{N}{|\xi_1|}\\
	&& \leq 2 - \dfrac{N}{|\xi_1|}\sim 1.
\end{eqnarray*}
Then, (iii) follows easily from observation that $M(\xi,\xi_1)\sim \dfrac{1}{m(\xi_1)}=\dfrac{N_1}{N}$.

The last case, follows from the fact that 
\begin{eqnarray*}
	m(\xi_1+\xi_2)-m(\xi_1)m(\xi_2)&& = N|\xi_1+\xi_2|^{-1}-N^2|\xi_1|^{-1}|\xi_2|^{-1}\\
	&& \sim N\left(\dfrac{1}{2|\xi_1|}-\dfrac{N}{|\xi_1|^2}\right)\\
	&& = \dfrac{N}{2|\xi_1|}\dfrac{|\xi_1|-2N}{|\xi_1|}\sim 1.
\end{eqnarray*}

Therefore, $M(\xi,\xi_1)\sim \frac{1}{m(\xi_1)m(\xi_2)}\sim \left(\dfrac{N_1}{N}\right)^2$.

Considering
\begin{equation}
	L_1= 2\mbox{Im}\displaystyle\int_0^{\delta}\displaystyle\int_{\mathbb{R}_{\xi}}\displaystyle\int_{\mathbb{R}_{\xi_1}}\widehat{I\overline{u}} (\xi_1)\  \widehat{Iv}(\xi_2)\ \widehat{I\overline{u}}(\xi)M(\xi,\xi_1)d\xi_1\ d\xi \ dt
\end{equation}
and 

\begin{equation}
	L_2= 2 \mbox{Im}\displaystyle\int_0^{\delta}\displaystyle\int_{\mathbb{R}_{\xi}}\displaystyle\int_{\mathbb{R}_{\xi_1}}\widehat{Iu}(\xi_1)\ \widehat{Iu}(\xi_2)\ \widehat{I\overline{v}}(\xi)M(\xi,\xi_1)d\xi_1\ d\xi \ dt,
\end{equation}
we get,

$$
\left|E(Iu,Iv)(\delta)-E(Iu,Iv)(0)\right|=|L_1+L_2|.
$$

\begin{prop}\label{pp1}
	For $\sigma>2$ and $s\geq -1/2$ we have 
	\begin{equation}
		\left|E(Iu,Iv)(\delta)-E(Iu,Iv)(0)\right| \leq N^{-\frac12}\delta^{\frac12}\left\|I(u) \right\|_{X^{0,\frac{1}{2}+}}^2\left\|I(v) \right\|_{X^{0,\frac{1}{2}+}}. 
	\end{equation}
\end{prop}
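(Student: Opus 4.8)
The plan is to estimate $L_1$ and $L_2$ separately, in each case splitting the frequency regime according to the five cases (i)--(v) established above for the multiplier bound on $M(\xi,\xi_1)$, and then to sum over a Littlewood--Paley decomposition $|\xi_1|\sim N_1$, $|\xi_2|\sim N_2$. First I would rewrite each $L_j$ as a spacetime integral over $[0,\delta]\times\mathbb{R}$ via Plancherel, namely $L_1 = 2\,\mathrm{Im}\int_0^\delta\int \big(I(\overline{u}v)-I\overline{u}\,Iv\big)\,\overline{I u}\,dx\,dt$ and similarly for $L_2$. Since the dangerous contributions come only from frequency interactions where $M$ is nontrivial, I would dyadically decompose and, on each piece, absorb the factor $M(\xi,\xi_1)$ using the bounds $|M|\lesssim N_1/N_2$ (case (i)), $|M|\lesssim N_1/N$ (case (iii)), $|M|\lesssim (N_1/N)^2$ (case (v)). In every case the key observation is that the \emph{smaller} frequency is comparable to the one on which a half-derivative gain can be extracted, so that $M$ effectively behaves like $N^{-1/2}\langle\xi_{\min}\rangle^{1/2}$ times harmless factors; this is exactly the shape needed to invoke Lemma \ref{chave}.

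The central step is the application of Lemma \ref{chave} (Lemma 5.1 of \cite{corcho-2009}): on the region where $|\xi_2|\ll|\xi_1|$ one has
\begin{equation*}
	\big\|\big(D_x^{1/2}f\big)\cdot g\big\|_{L^2_{x,t}} \leq c\,\|f\|_{X^{0,1/2}}\,\|g\|_{X^{0,1/2}},
\end{equation*}
valid also with complex conjugates and with $X^{0,1/2}$ replaced by $X_a^{0,1/2}$. Concretely, for $L_1$ in case (i) I would bound $|M|\lesssim N_1/N_2 \le N^{-1/2}\,(N_1/N_2)^{1/2}\,N_1^{1/2}N_2^{-1/2}\cdot(\text{dyadic weights})$ and distribute: put $D_x^{1/2}$ on the low-frequency factor (whose frequency is $\sim N_1$), pair the two high-frequency factors ($\sim N_2$) in $L^2_{x,t}$ by Lemma \ref{chave}, extract the gain $N^{-1/2}$, and the remaining dyadic sum in $N_1,N_2$ converges geometrically because of the strictly negative power of $\langle\xi_{\min}\rangle/N$. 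The time factor $\delta^{1/2}$ is produced by writing the $X^{0,1/2+}$ norms on $[0,\delta]$ in terms of $X^{0,1/2-}$ norms and using Lemma \ref{chave1} (with $b-b'$ a small positive number) to trade a power $\delta^{0+}$, together with one genuine factor $\delta^{1/2}$ coming from a crude Hölder in $t$ on the short interval; more carefully, one uses the standard $X^{s,b}$ localization estimate $\|\psi_\delta w\|_{X^{0,1/2-}}\lesssim \delta^{1/2-}\|w\|_{X^{0,1/2+}}$. Cases (iii) and (v) are handled the same way, with the even better gains $N_1/N$ and $(N_1/N)^2$; by the symmetry noted in the text only (i), (iii), (v) need to be treated, and $L_2$ is identical in structure to $L_1$ (with $u,u$ in place of $\overline{u},v$ and $Iv$ in place of $Iu$), so no new argument is needed.

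I expect the main obstacle to be the bookkeeping in case (v), the high-high interaction $|\xi_1|\sim|\xi_2|\gtrsim N$, where no single frequency is small and Lemma \ref{chave} cannot be applied in its stated "low-high" form. Here one has to exploit the quadratic gain $|M|\lesssim (N_1/N)^2$ against the loss of only $\langle\xi\rangle^{1/2}$-type weights coming from $s\ge -1/2$, and reorganize the trilinear expression so that a half-derivative lands on a factor whose output frequency $\xi=\xi_1+\xi_2$ may be small (since the two large frequencies can nearly cancel); one then pairs the output with one of the inputs in $L^2_{x,t}$. Making the dyadic sum over $N_1\sim N_2\gtrsim N$ converge requires the full strength of the $(N_1/N)^2$ bound, and checking that $-1/2 \le s \le 0$ is precisely the threshold at which the exponents balance is the delicate point; once this is verified, all pieces assemble to give the claimed bound $N^{-1/2}\delta^{1/2}\|Iu\|_{X^{0,1/2+}}^2\|Iv\|_{X^{0,1/2+}}$.
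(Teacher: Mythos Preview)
Your proposal is correct and follows essentially the same approach as the paper: reduce to estimating $L_1$ (with $L_2$ handled identically), split according to the five frequency cases (i)--(v) for the multiplier $M$, apply the bilinear refinement of Lemma \ref{chave} to gain the half-derivative, and use Lemma \ref{chave1} on one factor to extract the $\delta^{1/2}$. Your discussion of case (v) is in fact more careful than the paper's (which simply asserts $|\xi_1|\le 2|\xi|$ and applies Lemma \ref{chave} directly), but the overall strategy is the same.
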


\begin{proof} 
	It is enough to estimate $L_1$ and $L_2$. We still note  that $L_1$ and $L_2$ are equivalent. In this case, let us restrict ourselves to estimating $L_1$. Let us use the notation $|\xi|=|\xi_1+\xi_2|\sim N_3$
	
	For $2|\xi_1|\leq  |\xi_2|$ and $2|\xi_1|\leq  N$ such that $|M(\xi,\xi_1)| \lesssim \frac{N_1}{N_2}$. Then, from Lemmas \ref{chave} and \ref{chave1}, we see that
	
	\begin{eqnarray*}
		|L_1| &&\leq \left(\frac{N_1}{N_2}\right)^{1/2} \left\| D_x^{1/2}\widehat{I\overline{u}} (\xi_1)\cdot  \widehat{Iv}(\xi_2)\right\|_{L^2}\left\| \widehat{I\overline{u}}\right\|_{L^2}\\
		&& \leq \left(\frac{N_1}{N_2}\right)^{1/2} N_3^{-1/2} \left\|\widehat{Iu}\right\|_{X^{0,1/2+}} \left\| \widehat{Iv}\right\|_{X^{0,1/2+}}\delta^{1/2} \left\| \widehat{Iu}\right\|_{X^{0,1/2+}}\\
		&& \leq N^{-1/2} \delta^{1/2}\left\|I(u) \right\|_{X^{0,\frac{1}{2}+}}^2\left\|I(v) \right\|_{X^{0,\frac{1}{2}+}}.
	\end{eqnarray*}
	
	The case (ii), that is, $2|\xi_2|\leq  |\xi_1|$ and $2|\xi_2|\leq  N$ follows by the symmetry of the variables.
	\vspace{0.5cm}

	In the prove of cases (iii) and (iv)  
	when $s=-1/2$, such that $|M(\xi,\xi_1)| \lesssim \left(\frac{N_1}{N}\right)^{1/2}$
	
	\begin{eqnarray*}
		|L_1|&& \leq \left(\frac{N_1}{N}\right)^{1/2}\left\| D_x^{1/2}\widehat{I\overline{u}} (\xi_1)\cdot  \widehat{Iv}(\xi_2)\right\|_{L^2}\left\| \widehat{I\overline{u}}\right\|_{L^2}\\
		&& \leq \left(\frac{N_1}{N}\right)^{1/2} N_3^{-1/2} \left\|\widehat{Iu}\right\|_{X^{0,1/2+}} \left\| \widehat{Iv}\right\|_{X^{0,1/2+}}\delta^{1/2} \left\| \widehat{Iu}\right\|_{X^{0,1/2+}}\\
		&& \leq N^{-1/2} \delta^{1/2}\left\|I(u) \right\|_{X^{0,\frac{1}{2}+}}^2\left\|I(v) \right\|_{X^{0,\frac{1}{2}+}}.
	\end{eqnarray*}
	
	For the last case, we have $|M(\xi,\xi_1)| \lesssim \frac{N_1}{N}$ when $|\xi_1|\sim |\xi_2|\gtrsim N$ thereby,  $|\xi_1|\leq 2 |\xi|$ and it implies
	
	\begin{eqnarray*}
		|L_1|&& \leq \frac{N_1}{N}\left\| D_x^{1/2}\widehat{I\overline{u}} (\xi_1)\cdot  \widehat{Iv}(\xi_2)\right\|_{L^2}\left\| \widehat{I\overline{u}}\right\|_{L^2}\\
		&& \leq \frac{N_1}{N} N_1^{-1/2} \left\|\widehat{Iu}\right\|_{X^{0,1/2+}} \left\| \widehat{Iv}\right\|_{X^{0,1/2+}}\delta^{1/2} \left\| \widehat{Iu}\right\|_{X^{0,1/2+}}\\
		&& \leq N^{-1} \delta^{1/2}\left\|I(u) \right\|_{X^{0,\frac{1}{2}+}}^2\left\|I(v) \right\|_{X^{0,\frac{1}{2}+}}.
	\end{eqnarray*}

	Since $ \left|E(Iu,Iv)(\delta)-E(Iu,Iv)(0)\right|=|L_1+L_2|\leq |L_1|+|L_2|\leq c |L_1|$ we obtain  $$\left|E(Iu,Iv)(\delta)-E(Iu,Iv)(0)\right|\leq cN^{-1} \delta^{1/2}\left\|I(u) \right\|_{X^{0,\frac{1}{2}+}}^2\left\|I(v) \right\|_{X^{0,\frac{1}{2}+}}.$$
	
\end{proof}

Following the same arguments presented above, we prove the following
\begin{prop}
	For $0<\sigma<2$ and $s\geq -1/4$  have 
	\begin{equation}
		\left|E(Iu,Iv)(\delta)-E(Iu,Iv)(0)\right| \leq N^{-\frac14}\delta^{\frac12}\left\|I(u) \right\|_{X^{0,\frac{1}{2}+}}^2\left\|I(v) \right\|_{X^{0,\frac{1}{2}+}}. 
	\end{equation}
\end{prop}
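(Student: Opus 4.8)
The plan is to reproduce the proof of Proposition~\ref{pp1} almost line by line, the two modifications being that the borderline regularity is now $s=-\tfrac{1}{4}$ instead of $s=-\tfrac{1}{2}$ --- which is what changes the powers of $N$ extracted from the Fourier multiplier $M(\xi,\xi_1)$ --- and that the bilinear estimate of Lemma~\ref{chave} is applied in $X_a^{0,1/2}$ with $a=1/\sigma>1/2$, which is permitted by the remark following that lemma. As in Proposition~\ref{pp1}, it is enough to estimate $|L_1|$: the term $L_2$ is handled by the same chain of inequalities (the roles of $u$ and $v$ and one complex conjugation being interchanged, which affects none of the steps), and $|E(Iu,Iv)(\delta)-E(Iu,Iv)(0)|\le|L_1|+|L_2|$.

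First I would fix dyadic sizes $|\xi_1|\sim N_1$, $|\xi_2|\sim N_2$, $|\xi|\sim N_3$ and split the integral defining $L_1$ into the five regions (i)--(v) listed before Proposition~\ref{pp1}; by the symmetry $\xi_1\leftrightarrow\xi_2$ it suffices to treat (i), (iii) and (v). In each region the scheme is the one used for Proposition~\ref{pp1}: pull out the pointwise bound for $|M(\xi,\xi_1)|$, apply Lemma~\ref{chave} to the pair of factors whose frequencies are the most separated so as to gain a half power of the larger of the two, estimate the remaining factor in $L^2_{x,t}$, and recover the factor $\delta^{1/2}$ by inserting $\chi_{[0,\delta]}$ and using $X^{0,1/2+}\hookrightarrow C_tL^2_x$ together with Lemma~\ref{chave1}. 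In region~(i) one has $m(\xi_1)=1$, $|M|\lesssim N_1/N_2$ and $N_2\sim N_3$, so the computation of Proposition~\ref{pp1} gives a bound by $N^{-1/2}\delta^{1/2}\|Iu\|_{X^{0,1/2+}}^2\|Iv\|_{X^{0,1/2+}}$, which is more than enough. In region~(iii) all three frequencies exceed $2N$, hence $M(\xi,\xi_1)\sim 1/m(\xi_1)=(N_1/N)^{-s}$, which equals $(N_1/N)^{1/4}$ at the endpoint $s=-\tfrac{1}{4}$; since $N_3\sim N_2\gtrsim N_1$, Lemma~\ref{chave} contributes $N_3^{-1/2}\lesssim N_1^{-1/2}$, and the product of these is $\lesssim N^{-1/4}$ after summing the Littlewood--Paley pieces. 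This is precisely the place where the $N^{-1/2}$ of Proposition~\ref{pp1} degrades into $N^{-1/4}$, and it shows why $s\ge-\tfrac{1}{4}$ is the sharp hypothesis when $0<\sigma<2$: for $s>-\tfrac{1}{4}$ an extra positive power of $N_1$ is available in this region.

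The delicate region is (v), $N_1\sim N_2\gtrsim N$, where $|M(\xi,\xi_1)|\lesssim (N_1/N)^{-2s}=(N_1/N)^{1/2}$ still grows with $N_1$, so an additional negative power of $N_1$ must be produced. I would split (v) according to the size of $|\xi|=|\xi_1+\xi_2|$. If $|\xi|\ll N_1$ the output frequency is genuinely low: writing $\widehat{I\overline{u}}(\xi_1)=|\xi_1|^{-1/2}\widehat{D_x^{1/2}I\overline{u}}(\xi_1)$, which is legitimate because $|\xi_1|\sim N_1$, and then applying Lemma~\ref{chave} with first argument $D_x^{1/2}I\overline{u}$ and second argument the low-frequency piece of $I\overline{u}$, one gains the factor $N_1^{-1/2}$, which absorbs $|M|$; the Littlewood--Paley summation at the endpoint is then closed using the $0+$ slack in the $b$-index, exactly as in Proposition~\ref{local}. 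If instead $|\xi|\sim N_1$, all three frequencies are comparable and Lemma~\ref{chave} is unavailable; here one falls back on the dispersion of the group through $\omega-\omega_1-\omega_2=\xi^2+\xi_1^2-a\xi_2^2$: away from the two degenerate lines $\xi\sim\mu_a\xi_2$ and $\xi\sim(1-\mu_a)\xi_2$ already isolated in the proofs of Propositions~\ref{p3}--\ref{p4}, the largest modulation is $\gtrsim N_1^2$, which yields a gain of (almost) $N_1^{-1}$ from one of the three weights together with the $L^4_{x,t}$ Strichartz estimate for the remaining two, while on a thin neighborhood of those lines a further dyadic decomposition in the modulation variable is needed. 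Carrying out this resonant high-high analysis --- which is more involved for $\sigma<2$ than for $\sigma>2$ precisely because the Schr\"odinger resonance function vanishes along two lines instead of being uniformly of size $\sim N_1^2$ --- is the main obstacle. Once it is done, summing over (i)--(v) gives $|L_1|+|L_2|\lesssim N^{-1/4}\delta^{1/2}\|Iu\|_{X^{0,1/2+}}^2\|Iv\|_{X^{0,1/2+}}$, as claimed.
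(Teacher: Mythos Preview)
Your scheme is exactly the paper's: the proof in the text is the single line ``Analogous to the previous case,'' i.e., rerun Proposition~\ref{pp1} with the endpoint $s=-\tfrac14$ in place of $s=-\tfrac12$, and your treatment of regions (i)--(iv) does precisely this with the correct exponents on $M(\xi,\xi_1)$.

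Where you depart from the paper is region~(v). The paper's argument for case~(v) in Proposition~\ref{pp1} writes $|\xi_1|\le 2|\xi|$ and applies Lemma~\ref{chave} to the comparable-frequency pair $(I\overline u_{N_1},Iv_{N_2})$, neither of which is justified when $|\xi_1|\sim|\xi_2|$ and $|\xi|$ is unrestricted; you are right to split further by the size of $|\xi|$. Your low-output sub-case $|\xi|\ll N_1$ closes cleanly via Lemma~\ref{chave} with the genuine high--low pairing. The sub-case $|\xi|\sim N_1\sim N_2$, however, you leave as a sketch: you name the resonance mechanism and the modulation decomposition near the degenerate lines $\xi=\mu_a\xi_2$, $\xi=(1-\mu_a)\xi_2$, but you explicitly call completing it ``the main obstacle'' and do not carry it out. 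That is a self-acknowledged gap. The paper does not close it either --- it offers only the word ``analogous'' --- so you are strictly more careful than the source, but the argument is still incomplete as written. One simplifying observation you may use: when $|\xi|\sim N_1\gg N$ one has $m(\xi)\sim(N/N_1)^{-s}$ rather than $1$, so in this sub-case $|M|\lesssim m(\xi)/\bigl(m(\xi_1)m(\xi_2)\bigr)\sim(N_1/N)^{-s}=(N_1/N)^{1/4}$, not $(N_1/N)^{1/2}$; this halves the deficit that the resonance analysis must recover.
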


\begin{proof}
	Analogous to the previous case.
\end{proof}

\subsection{ Global existence}
In this subsection we will demonstrate Theorem \ref{global}.

\begin{proof}
	Given the initial conditions of the Cauchy Problem (\ref{nls-g}) $(u_0, v_0)\in H^s\times H^s$ such that 
	$$
	\left\|I(u_0) \right\|_{L^2}\leq c N^{-s}\left\|\right\|_{H^s} \ \mbox{ and } \ \left\|I(v_0) \right\|_{L^2}\leq c N^{-s}\left\|v_0\right\|_{H^s}.
	$$
	
	Applying the local well-posedness result of the Proposition \ref{local}, we see that there exists a unique solution in the time interval $[0, \delta]$, where $\delta \sim N^{-4s/3}$ and such that 
	$$
	\left\|I(u) \right\|_{X^{0,\frac12+}}+\left\|I(v) \right\|_{X^{0,\frac12+}}\leq c N^{-s}.
	$$ 
	
	For $\sigma>2$ and $s\geq -\frac12$ and using the Proposition \ref{pp1}, we have  
	
	$$
	\left|E(Iu,Iv)(\delta)-E(Iu,Iv)(0)\right| \leq N^{-\frac12}\delta^{\frac12}N^{-3s}.
	$$
	
	We should now prove that for every $T>0$ we can extend our solution to the range $[0,T]$. In order to do it, it is enough to apply the local well-posedness Theorem\ref{local} until we reach this interval, that is, $T/\delta$ times. If the modified energy does not grow more than the initial one for this number of interactions we can conclude that the result is extended up to the interval $[0,T]$, that is, we should have
	
	\begin{equation}
		\left|E(Iu,Iv)(\delta)-E(Iu,Iv)(0)\right|\frac{T}{\delta}\ll E(Iu_0,Iv_0).
	\end{equation} 
	Therefore, it is sufficient that 
	\begin{equation}
		N^{-\frac12}\delta^{\frac12}N^{-3s}\frac{T}{\delta}\ll N^{-2s} \ \mbox{ or } \ N^{-\frac12}\delta^{-\frac12}N^{-3s}T\ll N^{-2s}.
	\end{equation}
	
	Hence we conclude that $-\frac12 -3s+\frac{2s}{3}\leq -2s$ because $\delta^{-1/2}\sim N^{2s/3}$.
	It turns out that for any $s\geq-1/2$ 
	we can extend the solution at any time interval by taking $1\ll N$.
	
	\vspace{0.5cm}
	
	The prove of the other case ($0<\sigma<2$) follows similarly.
	
\end{proof}
The Theorem {showed} in this section tells us that the solution of the Cauchy Problem extends globally, in time, {in the sense that it connects} the points $(0,0)$ and $(-1/2,-1/2)$ {when} $\sigma>2$ and the points $(0,0)$ and $(-1/4,-1/4)$ in the case $0<\sigma<2$.

\section*{Acknowledgments}

{This paper is part of my Ph.D. thesis at the Federal University of Alagoas under the guidance of my advisor Adán J. Corcho. I want to take the opportunity	to express my sincere gratitude to him.


	
	\bibliographystyle{abbrv}
	\addcontentsline{toc}{section}{References}
	\bibliography{referencias}
\end{document}